\def\namedlabel#1#2{\begingroup
    #2%
    \def\@currentlabel{#2}%
    \phantomsection\label{#1}\endgroup
}
\def\R{{\mathbb R}}
\def\Z{{\mathbb Z}}
\def\Q{{\mathbb Q}}
\def\N{{\mathbb N}}
\def\F{{\mathbb F}}
\newcommand*\diff{\mathop{}\!\mathrm{d}}
\pgfplotsset{width=10cm,compat=1.9}
\newtheorem{theorem}{Theorem}[section]
\newtheorem{proposition}[theorem]{Proposition}
\newtheorem{lemma}[theorem]{Lemma}
\newenvironment{customthm}[1]
{\innercustomthm}
{\endinnercustomthm}
\newcounter{thmcounter}
\newcounter{introthmcounter}
\newtheorem{corollary}[theorem]{Corollary}
\theoremstyle{definition}
\newtheorem{definition}[theorem]{Definition}
\newtheorem*{definition*}{Definition}
\newtheorem{question}[theorem]{Question}
\newtheorem*{question*}{Question}
\newcounter{proofcount}
\theoremstyle{remark}
\newtheorem*{remark*}{Remark}
\def\section{\@startsection{section}{1}%
  \z@{.5\linespacing\@plus.7\linespacing}
% make this >0 for display heading
{.8\baselineskip}%
% add centering 
  {\normalfont\fontsize{11}{13}\centering\bfseries}%
}
\def\subsection{\@startsection{subsection}{2}%
  \z@{.4\linespacing\@plus.7\linespacing}
% make this >0 for display heading
{.6\baselineskip}%
% add centering
  {\normalfont\centering\bfseries}%
}
\numberwithin{equation}{section}
\theoremstyle{remark}
\newtheorem{remark}[theorem]{Remark}
\renewcommand{\d}{~\mathrm{d}}
\def\N{{\mathbb N}}
\newcommand{\xmt}{(X,\mu,T)}
\newcommand{\zmr}{(Z,m,R)}
\newcommand{\B}{\mathscr{B}}
\newcommand{\yns}{(Y,\nu,S)}
\newcommand{\gen}{\texttt{\textup{gen}}}
\newcommand{\wt}{\widetilde}
\newcommand{\Erdos}{Erd\H{o}s}
\newcommand{\Folner}{F\o{}lner}
\newcommand{\oh}{{\rm o}}
\renewcommand{\tocsection}[3]{%
  \indentlabel{\@ifnotempty{#2}{\bfseries\ignorespaces#1 #2\quad}}\bfseries#3}
\renewcommand{\tocsubsection}[3]{%
  \indentlabel{\@ifnotempty{#2}{\ignorespaces#1 #2\quad}}#3}
\title{Infinite unrestricted sumsets in subsets of abelian groups with large density}
\author{Dimitrios Charamaras}
\address{Institute of Mathematics, École Polytechnique Fédérale de Lausanne, Lausanne, Switzerland}
\email{dimitrios.charamaras@epfl.ch}
\author{Ioannis Kousek}
\address{Mathematics Institute, University of Warwick, Coventry, UK}
\email{ioannis.kousek@warwick.ac.uk}
\author{Andreas Mountakis}
\address{Department of Mathematics and Applied Mathematics, University 
of Crete, Heraklion, Greece}
\email{a.mountakis@uoc.gr}
\author{Trist{\'a}n Radi{\'c}}
\address{Department of Mathematics, Northwestern University, Evanston, IL, USA}
\email{tristan.radic@u.northwestern.edu}
\date{\today}
\begin{document}

\begin{abstract}
Let $(G,+)$ be a countable abelian group such that the subgroup $\{g+g\colon g\in G\}$ has finite index and the doubling map $g\mapsto g+g$ has finite kernel. We establish lower bounds on the upper density of a set $A\subset G$ with respect to an appropriate \Folner{} sequence, so that $A$ contains a sumset of the form $\{t+b_1+b_2\colon b_1,b_2\in B\}$ or $\{b_1+b_2\colon b_1,b_2\in B\}$, for some infinite $B\subset G$ and some $t\in G$. Both assumptions on $G$ are necessary for our results to be true. We also characterize the \Folner{} 
sequences for which this is possible.
Finally, we show that our lower bounds are optimal in a strong sense. 
\end{abstract}

\maketitle

\tableofcontents

\section{Introduction}

In \cite{kmrr2}, Kra, Moreira, Richter and Robertson 
resolved a longstanding conjecture of Erd\H{o}s (see
for example \cite[Page 305]{erdos1973}) via the following theorem.

\begin{customthm}{A}{\cite[Theorem 1.2]{kmrr2}}\label{KMRR theorem}
For any $A\subset \N$ with positive upper Banach 
density there exist
an infinite set $B\subset A$ 
and some $t\in \N$ such that 
$$B \oplus B := \{b_1 + b_2\colon  b_1,b_2\in B,\ b_1 \neq b_2\} \subset 
A-t.$$
\end{customthm}

It is natural to ask whether \cref{KMRR theorem} could be extended to other countable amenable (semi)groups. This is explored in \cite{charamaras_mountakis2024}, where the first and third authors establish an extension of \cref{KMRR theorem} for a wide class of such groups. This class includes all finitely generated nilpotent groups, and all abelian groups $(G,+)$ with the property that $2G := \{2g\colon g\in G\},$ where $2g:=g+g$, has finite index.

Throughout, unless explicitly stated otherwise, all the groups we consider 
are countable. Given an abelian group $G$, a sequence  
$\Phi=(\Phi_N)_{N\in \N}$ of finite subsets of $G$ is called 
a \textit{\Folner{} sequence} 
if for any $g\in G$, $$\lim_{N\to \infty} 
\frac{|\Phi_N \cap (g+\Phi_N)|}{|\Phi_N|}=1.$$ 
We denote the nonempty set of all \Folner{} sequences in $G$ by $\mathcal{F}_G$. The \textit{upper density of} $A\subset G$ with
respect to $\Phi\in \mathcal{F}_G$ is defined as   
$\overline{\diff}_{\Phi}(A) : = \limsup_{N\to \infty} \frac{|A\cap \Phi_N|}{|\Phi_N|}.$ We say that $A$ has \textit{positive upper Banach density} if $\overline{\diff}_{\Phi}(A)>0$ for some $\Phi\in \mathcal{F}_G$. 
Then \cite[Corollary 1.13]{charamaras_mountakis2024} asserts that if $(G,+)$ is a countable abelian group with index
$[G:2G]<\infty$ and $A\subset G$ has positive upper Banach 
density,
there exist an infinite set $B\subset A$ 
and some $t\in G$ such that $B \oplus B \subset A-t$.

Another natural question arising from \cref{KMRR theorem} is whether the restriction $b_1\neq b_2$ in the sumset can be removed. It turns out that this restriction is necessary, as one can construct a set $A \subset \N$ of full upper Banach 
density that contains no set of the form $t+B+B=\{t+b_1+b_2: b_1,b_2 \in B\}$ where $B\subset \N$ is infinite and $t\in\N$ (see \cite[Example 2.3]{kmrr_survey}). It would thus be interesting to explore
what -- if any at 
all -- density assumptions on the set $A\subset \N$ would allow 
one to drop the restriction $b_1\neq b_2$.
This was studied by the second and fourth authors in 
\cite{kousek_radic2024}. There, it is shown that for any $A\subset \N$ 
such that $\overline{\diff}_{([1,N])}(A)>2/3$, there are an infinite set 
$B\subset \N$ and some $t\in \{0,1\}$ such that $t+B+B\subset A$. We 
remark that taking the density with respect to the initial \Folner{} 
sequence $N\mapsto[1,N]$ in $\N$ is essential, as 
it is clear from \cite[Example 2.3]{kmrr_survey} that not all \Folner{} 
sequences can be used to guarantee such density threshold values. 

For additional results and open problems on infinite sumsets, we refer
the reader to 
\cite{ackelsberg_nilpotentpolishgroups,host19,kousek2025asymmetric,kmrr_density_finite_sums,kmrr_survey,kmrr1,mrr19}.

The main aim of the preceding discussion is to set 
the stage for the following -- a posteriori natural -- 
questions, which we address in this paper:
Let $G$ be an abelian group, such that $[G:2G]<\infty$. 

\begin{enumerate}
    \item[\namedlabel{First question}{(a)}] Can we find a \Folner{} sequence $\Phi$ and a constant $c=c(G,\Phi)>0$ such that any set $A\subset G$ with $\overline{\diff}_\Phi(A)>c$ contains an unrestricted sumset of the form $t+B+B$ for some infinite set $B\subset G$ and some $t\in G$?
    \item[\namedlabel{Second question}{(b)}] Can we answer \ref{First question} in an optimal way?
\end{enumerate}

We refer to this problem as 
\textit{the unrestricted $B+B$ problem}.
Throughout, $G$ denotes a countable abelian group
and $D$ denotes the doubling map 
$D: G \to G, D(g)=2g$. In addition, given a set 
$A\subset G$, we denote the set $D(A)$ by $2A$ and the set $D^{-1}(A)$ by $A/2$.

As we alluded to earlier, not all \Folner{} sequences 
have density thresholds for the unrestricted $B+B$ 
problem.
However, we are able to pinpoint 
structural properties of a \Folner{} sequence that 
allow for this to happen and we describe those in the 
next definition.

\begin{definition}\label{quasi-invariant to doubling def}
Let $G$ be a countable abelian group. 
\begin{itemize}
    \item[(i)] We define {\em the doubling ratio} of a \Folner{} sequence $\Phi=(\Phi_N)_{N\in\N}$ in $G$ as
    \begin{equation} \label{eq alpha_Phi defn}
        \alpha_{\Phi}  = 
        \liminf_{N\to \infty} \frac{| \Phi_N/2 \cap \Phi_N|}{|\Phi_N|}.
    \end{equation}
    Whenever $\alpha_\Phi>0$, we say that the \Folner{} sequence $\Phi$ is 
    {\em quasi-invariant with respect to doubling with ratio $\alpha_\Phi$}, and we 
     abbreviate this as \textit{q.i.d. with ratio $\alpha_{\Phi}$}.
    \item[(ii)] We define the {\em group doubling ratio} as
    $\alpha_G = \sup \{ \alpha_{\Phi} \colon \Phi\in\mathcal{F}_G \}$, where, as noted before, $\mathcal{F}_G$ denotes the set of 
    all \Folner{} sequences in $G$. 
\end{itemize}
\end{definition}

As we show in \cref{section_existence}, any abelian group $G$ with 
$\ell=[G:2G]<\infty$ and $r=|\ker(D)|<\infty$ admits \Folner{} sequences 
that are quasi-invariant with respect to doubling. More precisely, we prove 
that $\alpha_G=\min\{1,\frac{r}{\ell}\}$ and that the value $\alpha_G$ is attained, i.e., there is a \Folner{}
sequence $\Phi$ in $G$ so that $\alpha_{\Phi}=\alpha_G$.

Next, we state our main theorem, which asserts that the \Folner{} sequences defined in \cref{quasi-invariant to doubling def} possess the necessary structural properties to provide an affirmative answer to question \ref{First question}. This allows us to resolve this question for all abelian groups $G$ with $[G:2G]<\infty$ and $|\ker(D)|<\infty$.

\begin{theorem} \label{main_theorem_1}
Let $(G,+)$ be a countable abelian group with $\ell=[G:2G]<\infty$ and $r=|\ker(D)|<\infty$. 
Let $A\subset G$ and $\Phi$ be any \Folner{} sequence in $G$ that is quasi-invariant with respect to doubling with ratio $\alpha_{\Phi}$. Then the following hold:
\begin{enumerate}
\item  \label{main_theorem_1_2}
If 
$\displaystyle \overline{\diff}_\Phi(A) > 1 - \frac{\ell\alpha_\Phi}{\ell +r},$
then there exists an infinite set $B\subset G$ and some $t\in G$ such that 
$t+B+B\subset A$. 
\item  \label{main_theorem_1_1} 
\vspace{2mm} 
If  
$\displaystyle { \overline{\diff}_\Phi(A) > 1 - \frac{\alpha_\Phi}{\ell+r}},$
then there exists an infinite set $B\subset G$ such that $B+B\subset A$. 
\item   \label{main_theorem_evens}  \vspace{2mm} If 
$\displaystyle\overline{\diff}_\Phi(A\cap 2G) > \frac{1}{\ell} - \frac{\alpha_\Phi}{\ell +r},$
then there exists an infinite set $B\subset G$ such that $B+B\subset A$.
\end{enumerate}
\end{theorem}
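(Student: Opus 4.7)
The plan is to reduce each of the three claims to the infinite restricted-sumset theorem for countable abelian groups with $[G:2G]<\infty$ (\cite[Corollary 1.13]{charamaras_mountakis2024}), which produces an infinite $B$ with $B\oplus B\subset A-t$ whenever $A$ has positive upper Banach density. The novelty of our theorem is the \emph{diagonal} requirement $2B\subset A-t$, equivalently $B\subset D^{-1}(A-t)$; combined with the off-diagonal conclusion of \cite{charamaras_mountakis2024}, this gives the desired $t+B+B\subset A$. Concretely, it suffices to produce an infinite $B\subset C_t:=A\cap D^{-1}(A-t)$ with $B\oplus B\subset A-t$, for a matching shift $t$. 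The q.i.d.\ hypothesis on $\Phi$ is precisely what allows the doubling constraint to be handled via Følner averaging.

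The central combinatorial input is a density lemma asserting that $C_t$ has positive upper $\Phi$-density for an appropriate coset representative $t\in G/2G$. Its proof combines an inclusion--exclusion
\[
|C_t\cap\Psi_N|\;\geq\;|\Psi_N|-|\Phi_N\setminus A|-r\cdot|(\Phi_N\setminus(A-t))\cap 2G|,
\]
where $\Psi_N:=\Phi_N\cap D^{-1}(\Phi_N)$ (so $|\Psi_N|\sim\alpha_\Phi|\Phi_N|$), with the fact that $D$ is $r$-to-$1$, and averages the ``bad'' term over the $\ell$ cosets of $2G$. This yields the threshold $1-\tfrac{\ell\alpha_\Phi}{\ell+r}$ of part~(1); fixing the coset without averaging strengthens the threshold by the factor $\ell$, giving the bound $1-\tfrac{\alpha_\Phi}{\ell+r}$ of part~(2). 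For part~(3) one restricts $B$ to a single coset of $2G$ so that $B+B\subset 2G$ automatically; the density hypothesis on $A\cap 2G$, once normalized by the density $1/\ell$ of $2G$ in $G$, provides positivity of $C_0$ via an analogous inclusion--exclusion inside the subgroup $2G$.

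With positive upper $\Phi$-density of $C_t$ in hand, we do not apply \cite[Corollary 1.13]{charamaras_mountakis2024} as a black box, but instead open up its inductive construction and impose the additional constraint $b_{k+1}\in C_t$ at every step; the density lemma ensures that the set of feasible candidates retains positive upper $\Phi$-density, so the induction continues. This produces an infinite $B\subset C_t$ with $B\oplus B\subset A-t$ (matching shifts) and $2B\subset A-t$, giving $t+B+B\subset A$ for part~(1), and the same construction with $t=0$ for parts~(2) and~(3). The main obstacle is precisely this shift alignment: a naive post-hoc application of \cite[Corollary 1.13]{charamaras_mountakis2024} to $C_t$ would produce an extraneous shift $s\neq t$, giving $B\oplus B\subset A-s$ while $2B\subset A-t$, which cannot be combined into a single-shift sumset $B+B\subset A-t$. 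Threading the diagonal constraint through the inductive construction --- where the q.i.d.\ property is indispensable, as it is the only hypothesis under which ``$b_{k+1}\in D^{-1}(A-t)$'' translates to a positive-density condition via Følner averages --- is the essential technical step.
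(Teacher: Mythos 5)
Your strategy correctly identifies the obstacle (shift alignment between $B\oplus B\subset A-s$ and $2B\subset A-t$), and your inclusion--exclusion bound for $|C_t\cap\Psi_N|$ with $\Psi_N=\Phi_N\cap(\Phi_N/2)$ is the right first move. But the proposed fix --- ``open up the inductive construction of \cite[Corollary~1.13]{charamaras_mountakis2024} and impose $b_{k+1}\in C_t$ at each step'' --- does not engage with how that theorem is actually proved. The construction of $B$ there is \emph{not} a positive-density greedy induction over $G$. One first passes, via correspondence, to a topological $G$-system, finds an Erd\H{o}s progression $(x_0,x_1,x_2)$ using the measure-theoretic machinery (the measures $\sigma_a$ and $\lambda_{(x_1,x_2)}$, genericity, the Kronecker factor), and only then extracts $B=(b_n)$ from the \emph{sparse} sequence $(g_n)$ realizing $T_{g_n}\times T_{g_n}(x_0,x_1)\to(x_1,x_2)$. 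Knowing that $C_t$ has positive upper $\Phi$-density gives you no control over which $g_n$ land in $C_t$, so the extra constraint cannot simply be ``threaded through the induction.'' The diagonal constraint has to be encoded at the level of the dynamical system, before the Erd\H{o}s progression is even sought.

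The paper's resolution is precisely this encoding: it works in the product system $(\Sigma\times\Sigma,\mu,S^2\times S)$, where the first coordinate tracks $S_{2g}a$ and the second tracks $S_g a$. The q.i.d.\ hypothesis enters in the correspondence principle (Lemma~\ref{correspondence}), producing a generic point $(a,a)$ for a measure $\mu$ on the product together with the refined bound $\ell\mu(\Sigma\times E)+\mu(E\times\Sigma)\geq\frac{\ell+r}{\beta}(\overline{\diff}_\Phi(A)-1)+\ell+1$. Feeding the open sets $E_1=E\times\Sigma$, $E_2=\Sigma\times E$ into Theorem~\ref{analogue of 2.1 KR} (which needs the identity $\frac{1}{\ell}\sum_i T_{g_i}\pi_2\sigma_a=\mu$ to push the threshold down to $\ell\mu(E_2)+\mu(E_1)>\ell$ rather than $\mu(E_1)+\mu(E_2)>1$) yields a single Erd\H{o}s progression in the \emph{product}, from which Lemma~\ref{EP and B+B} extracts a single $B$ simultaneously satisfying $2B\subset A$ (first coordinate) and $B\oplus B\subset A$ (second coordinate). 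The shift alignment is automatic because both constraints live inside one Erd\H{o}s progression of one system. That product-system device is the essential idea, and your sketch has no analogue of it. A secondary issue: your ``fixing the coset without averaging'' computation for part~(2) actually delivers the threshold $1-\frac{\alpha_\Phi}{1+r}$, which is \emph{smaller} than the stated $1-\frac{\alpha_\Phi}{\ell+r}$ (and hence, given the paper's optimality results, cannot be correct as a sufficient threshold); this confirms that positive density of $C_0$ alone is not what drives the conclusion. The paper instead reduces all three parts to a single one via the purely combinatorial Proposition~\ref{prop equiv formulation}, avoiding three separate density computations.
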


The statements (1), (2) and (3) of \cref{main_theorem_1} are logically equivalent; this is proved in \cref{section_equivalence}. Hence, it suffices to establish only one of the statements. In \cref{section_dynamical_st}, we reformulate statement (2) into a dynamical statement, which we then prove in \cref{section_proof_of_dynamical_st}, concluding the proof of \cref{main_theorem_1}.

A useful interpretation of the equivalence between 
(\ref{main_theorem_1_1}) and (\ref{main_theorem_evens}) 
in \cref{main_theorem_1} is that in order to find patterns of the 
form $B+B$ in $A$ it suffices to check how many {\em even elements} the set $A$ 
has. 
This was already noticed in 
\cite[Corollary 1.3]{kmrr2} for the restricted sumsets of the form $B \oplus B $ mentioned before, where one only requires positive upper Banach density along even numbers in $\N$ for a non-shifted version of \cref{KMRR theorem}. Here, the same phenomenon 
explains the different bounds in \cref{main_theorem_1}.
    
The assumptions concerning the group $G$ and the \Folner{} sequence in \cref{main_theorem_1} are necessary; this is the content of Section \ref{section_necessity}. In particular, in \cref{finite_kernel_is_necessary} we construct, in a group with $|\ker(D)|=\infty$, a set of full density along a \Folner{} sequence that is quasi-invariant with respect to doubling, which contains no infinite sumsets.
Then, in \cref{section qid is necesary} we show that 
along any \Folner{} sequence
that is not quasi-invariant with respect to doubling, 
there are sets of full upper density 
that contain no
infinite sumset (see \cref{quasi-invariant to doubling is necessary general}). As a result of independent interest, we deduce that in any abelian group where the subgroup $2G$ is infinite, there is a set of full upper Banach density that contains no sumsets (see \cref{no B+B in full density always}). On the other hand, if $2G$ is finite, then any set of upper Banach density $1$ contains a shifted infinite sumset (see \cref{t+B+B in finite 2G}).

The lower bounds in \cref{main_theorem_1} are derived from the one in the correspondence principle (see \eqref{dbound_no_shift} in \cref{correspondence}). One of the main challenges in the proof of the main theorem is to make the the latter bound as sharp as possible, in order to obtain optimal bounds in \cref{main_theorem_1}. The next theorem shows that the bounds in \cref{main_theorem_1} are indeed optimal with respect to the parameters $\ell$, $r$, and $\alpha_G$. Before stating it, we should stress that in all abelian groups $G$ with $\ell=[G:2G]<\infty$ and $r=|\ker(D)|<\infty$, both $\ell$ and $r$ are powers of $2$ (see \cref{powers_of_2}).

\begin{theorem} \label{optimality}
Let $\ell,r \in \N$ be powers of $2$. Then, there exist a countable 
abelian group 
$G$ with $[G:2G]=\ell$ and $|\ker(D)|=r$, a \Folner{} sequence $\Phi$ 
in $G$ which is quasi-invariant with respect to doubling with ratio
$\alpha_{\Phi}=\alpha_G$, and a set $A\subset G$ with 
$\overline{\diff}_{\Phi}(A)=1-\frac{\alpha_G}{\ell +r}$, 
for which $B+B \not \subset A$ for any infinite $B\subset G$.
\end{theorem}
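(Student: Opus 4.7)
My plan is to produce an explicit witness $(G, \Phi, A)$ for each pair of powers of $2$. Writing $\ell = 2^L$ and $r = 2^R$, I would take
\[
G \;=\; \Z^{a} \oplus (\Z/2\Z)^{b} \oplus \Z(2^\infty)^{c},
\]
where $a = \max(L-R, 0)$, $b = \min(L, R)$, and $c = \max(R-L, 0)$; a direct componentwise computation then gives $[G:2G] = 2^{a+b} = \ell$ and $|\ker(D)| = 2^{b+c} = r$. For $\Phi$ I would take the product \Folner{} sequence with factors $[-N,N]$ on each $\Z$-summand, the full group on each $\Z/2\Z$-summand, and the nested subgroups $\Z/2^N\Z \subset \Z(2^\infty)$ on each Pr\"ufer summand. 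Since the doubling ratio is multiplicative on product \Folner{} sequences and equals $1/2$, $1$, $1$ respectively on the three factor types, this gives $\alpha_\Phi = (1/2)^a = \min(1, r/\ell) = \alpha_G$.

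The construction of $A$ proceeds via the following reduction. Given any infinite $B \subset G$, pigeonhole over the $\ell$ cosets of $2G$ yields an infinite $B_0 \subset B$ contained in a single coset $c + 2G$; writing $b = c + 2h$ for $b \in B_0$ then gives $B_0 + B_0 = 2c + 2(H + H)$ for some infinite $H \subset G$. Thus, if I set $A := (G \setminus 2G) \cup A_2$ with $A_2 \subset 2G$, then $A$ will contain no infinite $B+B$ provided that $A_2$ meets no set of the form $2c + 2(H + H)$ with $c \in G$ and $H$ infinite. Pulling back through $D$, this is equivalent to requiring that the $\ker(D)$-invariant lift $A_2' := D^{-1}(A_2) \subset G$ avoid every shifted infinite sumset $t + H + H$ with $t \in G$.

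Choosing $A_2$ so that $\overline{\diff}_\Phi(A_2) = 1/\ell - \alpha_G/(\ell + r)$ yields $\overline{\diff}_\Phi(A) = 1 - \alpha_G/(\ell + r)$, matching the target, and a direct density accounting (using the quasi-invariance of $\Phi$) shows that the lift $A_2'$ has density $1 - \ell\alpha_G/(\ell + r)$ in $G$, precisely the threshold appearing in statement (1) of \cref{main_theorem_1}. What remains is to exhibit such a $\ker(D)$-invariant $A_2'$ of that density avoiding every shifted infinite sumset: for $G = \Z$ (where $\ker(D)$ is trivial) the construction of \cite{kousek_radic2024} supplies exactly such a set of density $2/3 = 1 - \ell\alpha_G/(\ell + r)$, and for general $G$ I would produce a componentwise analogue adapted to the product structure of $\Phi$. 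The main obstacle will be combining these factor-wise examples into a single $A_2'$ that is simultaneously $\ker(D)$-invariant and saturates the density bound exactly---naive symmetrization over $\ker(D)$-orbits (by union or intersection) alters the density---so a construction carefully adapted to the product structure, for instance a preimage under projection to a well-chosen quotient, is needed. The factor $\ell + r$ in the denominator encodes the combined contribution of the $\ell$ cosets of $2G$ and the $r$ preimages in each doubling fibre, and any less delicate choice of $A_2'$ would miss the balance required for optimality.
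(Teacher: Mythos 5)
Your outline is structurally sound in places---the decomposition $A=(G\setminus 2G)\sqcup A_2$ with $A_2\subset 2G$, the pigeonhole reduction to a single coset, and the passage to the $\ker(D)$-invariant pullback $A_2'=D^{-1}(A_2)$ are all correct and in fact mirror what the paper does (where $A_1=O_{d_1}\times(\Z(1/2)/\Z)^{d_2}$ is exactly $G\setminus 2G$). The Følner-multiplicativity computation for the product $\Phi$ also checks out for the components you list. However, there are two genuine gaps.

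First, your choice of group fails outright whenever $\ell=r$. With $L=R$ your formula gives $a=c=0$ and $b=L$, so $G=(\Z/2\Z)^L$, a \emph{finite} group (including the trivial group when $L=R=0$). In a finite group, no infinite $B$ exists, so the non-containment condition is vacuous; but more importantly the target density $1-\frac{\alpha_G}{\ell+r}=1-2^{-(L+1)}=\frac{2^{L+1}-1}{2^{L+1}}$ is not a multiple of $2^{-L}$, so no subset of a group of order $2^L$ can achieve it. The paper sidesteps this by never using $\Z/2\Z$ factors: for $\ell=r=1$ it works in $\F_3^\omega$, and for $\ell=r=2^d\ge 2$ it works in $\Z^d\times(\Z(1/2)/\Z)^d$, both infinite.

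Second---and this is the larger issue---you do not actually construct $A_2'$ (or $A_2$, or $A$). The reduction to ``a $\ker(D)$-invariant set $A_2'$ of density $1-\frac{\ell\alpha_G}{\ell+r}$ avoiding every shifted infinite sumset'' is valid, and you correctly observe that \cite{kousek_radic2024} supplies exactly such a set when $G=\Z$, but for general $G$ you explicitly flag the main obstacle (naive $\ker(D)$-symmetrization destroys the density, a preimage-under-quotient construction ``is needed'') without resolving it. This is not a detail: Sections~4.4 and 4.5 of the paper are devoted precisely to building these sets in the mixed cases, including designing non-product (``triangular'') Følner sequences and verifying density and avoidance with substantial bookkeeping. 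Deferring this step means the proposal is a strategy outline, not a proof. Note also that your density identity $\overline{\diff}_\Phi(D^{-1}(A_2))=\ell\,\overline{\diff}_\Phi(A_2)$ is asserted rather than proved and needs justification using the quasi-invariance hypotheses (it follows from the lemmas in the paper's appendix, but is not ``direct'').

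One remark worth keeping in mind if you pursue this route: when $L\ne R$ your groups do avoid the simultaneous-expansion/contraction issue that forces the paper into triangular Følner sequences, since you never mix $\Z$ and $\Z(2^\infty)$ factors. If the $A_2'$ construction could actually be carried out in $\Z^a\times(\Z/2\Z)^b$ and $(\Z/2\Z)^b\times\Z(2^\infty)^c$ with product Følner sequences, that would give a cleaner argument in those ranges; but the $\ell=r$ case would still need a separate, infinite group, and in all cases the avoidance argument must still be written.
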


The proof of \cref{optimality} is carried out in \cref{section_examples}. We note 
that optimality of the bounds was already known in $\Z$
(see \cite[Section 4]{kousek_radic2024}). For every choice of $\ell, r$, we construct examples
using the groups $\Z$,  $\F_p^{\omega}$, 
$\Z(1/2)/\Z = \{ \frac{k}{2^n} \mod 1 \mid n, k \in \Z\}$ and their products. The most challenging part in the proof of \cref{optimality} is the construction of the counterexample in the case $\ell = 2^{d_1}$ and $r = 2^{d_2}$, with 
$d_1, d_2\geq 1$, where we work in the group $G = \Z^{d_1} \times (\Z(1/2)/\Z)^{d_2}$. In this setting, finding a \Folner{} sequence $\Phi$ that is quasi-invariant with respect to doubling, and then a set $A$ that achieves the required density threshold along $\Phi$ while avoiding infinite sumsets, is a technical and intricate task.
The following table summarizes the groups where we build the corresponding examples with the respective value of $\alpha_G$. 

\begin{table}[H]
\centering
\begin{tabular}{|c|c|c|}
\hline
& $\ell =1$ & $\ell = 2^{d_1}, d_1\geq1$ \\ \hline
$r=1$ & $\F_3^{\omega}, \quad \alpha_G = 1$ & $\Z^{d_1}, \quad \alpha_G = 2^{-d_1}$ \\ \hline
$r=2^{d_2}, d_2\geq1$ & $(\Z(\frac{1}{2})/\Z)^{d_2}, \quad \alpha_G = 1$ & $\Z^{d_1} \times (\Z(\frac{1}{2})/\Z)^{d_2}, \quad \alpha_G = \min\{1,2^{d_2-d_1}\}$ \\ \hline
\end{tabular}
\end{table}

The following question, regarding optimality of the 
bounds, arises naturally 
from our work.

\begin{question}\label{open_qu_0}
Let $G$ be a countable abelian group with $\ell=[G:2G]<\infty$ and 
$r=|\ker(D)|<\infty$, and let $\Phi$ be a \Folner{} sequence in $G$ that is quasi-invariant with respect to doubling. Does there exist a set $A\subset G$ with 
$\overline{\diff}_{\Phi}(A)=1-\frac{\alpha_{\Phi}}{\ell +r}$, 
such that $B+B \not \subset A$ for any infinite $B\subset G$?
\end{question}

\cref{open_qu_0} asks for the strongest possible notion of optimality for the bounds in our main result. The following, weaker, natural question has better chances of having a positive answer.

\begin{question}\label{open_qu_1}
Let $G$ be a countable abelian group with $\ell=[G:2G]<\infty$ and 
$r=|\ker(D)|<\infty$. Can one always find a \Folner{} sequence $\Phi$ in $G$ 
that is quasi-invariant with respect to doubling and a set 
$A\subset G$ with 
$\overline{\diff}_{\Phi}(A)=1-\frac{\alpha_{\Phi}}{\ell +r}$, 
such that $B+B \not \subset A$ for any infinite $B\subset G$?
\end{question}

We stress that \cref{optimality} does not provide an answer to \cref{open_qu_1}. Indeed, given the parameters $\ell$ and $r$, \cref{optimality} asserts the existence of some group $G$ with those values, a set $A\subset G$ and a \Folner{} $\Phi$ in $G$ such that $\overline{\diff}_{\Phi}(A)=1-\frac{\alpha_G}{\ell +r}$ and
$B+B \not \subset A$ for any infinite $B\subset G$.

\vspace{2mm} 
\noindent
\textbf{Notational conventions.} 
We let $\N=\{1, 2, \ldots\}$ and $\N_0=\{0, 1, 2, \ldots\}$. 
In addition, we use the symbol $\sqcup$ to denote unions of pairwise disjoint
sets. Given a group $G$, we denote by $e_G$ the identity element of the 
group. Finally, we write $\oh_{k \to \infty}(1)$ to 
denote an error term that goes to $0$ as $k$ grows to infinity.

\vspace{2mm} 
\noindent
\textbf{Acknowledgements.} 
The authors would like to thank Bryna Kra, Nikos Frantzikinakis,
Joel Moreira and Florian K. Richter for providing helpful comments and 
suggestions. The first author was supported by the Swiss National Science 
Foundation
grant TMSGI2-211214. 
The second author was supported by the Warwick Mathematics Institute 
Centre for Doctoral Training. The third author was supported by the 
Research Grant ELIDEK HFRI-NextGenerationEU-15689. The fourth author was partially supported by the National Science Foundation grant DMS-2348315.

\section{Translation of \cref{main_theorem_1} to a dynamical statement}
\label{section_2}

The proof of \cref{main_theorem_1} is accomplished via a
dynamical systems reformulation.  
We start the present section by showing that any one of the statements \eqref{main_theorem_1_2}, \eqref{main_theorem_1_1} or 
\eqref{main_theorem_evens} of \cref{main_theorem_1} implies the others,
and hence  
it suffices to prove \eqref{main_theorem_1_1} in order 
to 
establish the theorem. In \cref{subsec_background} we
provide the background material needed in order to realize
the proof. Finally, in
\cref{section_dynamical_st} we state our main dynamical result, 
\cref{analogue of 2.1 KR}, and prove 
that it implies \cref{main_theorem_1}.

\subsection{Equivalence of the statements in the main theorem}\label{section_equivalence}
 
 As was mentioned before, $G$ always denotes a countable abelian group with $\ell=[G: 2G]< \infty$. We also fix $g_1, \ldots, g_{\ell}\in G$ so that $G=\bigsqcup_{i=1} ^{\ell} 2G+g_i$. We omit these assumptions from the statements of this section. 

\begin{lemma} \label{lemma reductions no shift}
    Fix a subset $A \subset G$. The following are equivalent:
    \begin{enumerate}
        \item \label{lemma reductions no shift_1} $A$ contains $B+B$ for some 
        infinite set $B\subset G$.
        \item \label{lemma reductions no shift_2} $A \cap 2G$ contains $B+B$ for some infinite set $B\subset G$.
        \item \label{lemma reductions no shift_3} $(A \cap 2G) \cup(G \backslash 2G) $ contains $B+B$ for some infinite set $B\subset G$.
    \end{enumerate}
\end{lemma}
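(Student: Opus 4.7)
The plan is to prove the cyclic chain of implications $\eqref{lemma reductions no shift_1}\Rightarrow\eqref{lemma reductions no shift_2}\Rightarrow\eqref{lemma reductions no shift_3}\Rightarrow\eqref{lemma reductions no shift_1}$. The middle implication is immediate from $A\cap 2G\subset (A\cap 2G)\cup(G\setminus 2G)$, so the whole argument reduces to two observations.

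The key structural fact, used in both non-trivial implications, is that if an infinite set $C\subset G$ is contained in a single coset $2G+g_i$, then $C+C\subset 2G$ (because for any $c_1,c_2\in C$ one has $c_1+c_2\in 2G+2g_i=2G$, since $2g_i\in 2G$). Combined with the hypothesis $\ell=[G:2G]<\infty$, a pigeonhole argument will let me extract, from any infinite set $B\subset G$, an infinite subset $B'\subset B$ that lies entirely in one of the cosets $2G+g_1,\dots,2G+g_\ell$.

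For $\eqref{lemma reductions no shift_1}\Rightarrow\eqref{lemma reductions no shift_2}$: assume $B+B\subset A$ for some infinite $B$. Pigeonhole yields an infinite $B'\subset B$ contained in some coset $2G+g_i$, and then $B'+B'\subset (B+B)\cap 2G\subset A\cap 2G$, giving \eqref{lemma reductions no shift_2}.

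For $\eqref{lemma reductions no shift_3}\Rightarrow\eqref{lemma reductions no shift_1}$: assume $B+B\subset (A\cap 2G)\cup(G\setminus 2G)$ for some infinite $B$. Again pick an infinite $B'\subset B$ in one coset $2G+g_i$, so $B'+B'\subset 2G$. Intersecting with the hypothesized containment forces
\[
B'+B'\subset\big((A\cap 2G)\cup(G\setminus 2G)\big)\cap 2G=A\cap 2G\subset A,
\]
which is exactly \eqref{lemma reductions no shift_1}. I don't foresee a genuine obstacle here; the only thing worth emphasising carefully is that $B+B\subset 2G$ does \emph{not} hold for arbitrary $B$ (sums across different cosets may leave $2G$), which is precisely why the pigeonhole/coset-restriction step is required before one can freely intersect with $2G$.
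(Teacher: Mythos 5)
Your proof is correct and essentially matches the paper's: both rest on the same pigeonhole reduction to an infinite subset of $B$ inside a single coset $2G+g_i$, whose self-sums land in $2G$. The only cosmetic difference is that you close the cycle via $\eqref{lemma reductions no shift_3}\Rightarrow\eqref{lemma reductions no shift_1}$ directly, while the paper shows $\eqref{lemma reductions no shift_3}\Rightarrow\eqref{lemma reductions no shift_2}$ as a special case of $\eqref{lemma reductions no shift_1}\Rightarrow\eqref{lemma reductions no shift_2}$ applied to $(A\cap 2G)\cup(G\setminus 2G)$ — the same calculation in slightly different clothing.
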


\begin{proof}
    For any infinite $B\subset G$, by the pigeonhole principle, there is an 
    infinite subset of $B$ which is contained in some coset $2G+g_i$. Equivalently, there exists an infinite $B' \subset 2G$ and some $i\in\{1,\dots,\ell\}$, such that 
    $B'+g_i \subset B$. If $B+B\subset A$, then for $B'+g_i$
    chosen as before we have $(B'+g_i)+(B'+g_i)\subset A\cap 2G$,
    therefore proving \eqref{lemma reductions no shift_1} $\implies$
    \eqref{lemma reductions no shift_2}. The implication 
    \eqref{lemma reductions no shift_2} $\implies$
    \eqref{lemma reductions no shift_3} is obvious. Finally, that \eqref{lemma reductions no shift_3} implies \eqref{lemma reductions no shift_2} is a special case of the implication 
    \eqref{lemma reductions no shift_1} $\implies$
    \eqref{lemma reductions no shift_2}, because $\left( (A\cap 2G) \cup (G \setminus 2G) \right) \cap 2G = A\cap 2G$.
\end{proof}

\begin{lemma} \label{lemma reductions with shift}
Fix a subset $A \subset G$. The following are equivalent:
\begin{enumerate}
    \item \label{lemma reductions with shift_1} $A$ contains $t+B+B$ for some $t \in G$ and infinite $B \subset G$.
    \item \label{lemma reductions with shift_2} $A$ contains $B+B + g_i$ for some $i \in \{1, \ldots, \ell\}$ and infinite 
    $B \subset G$.
    \item \label{lemma reductions with shift_3} $(A - g_i) \cap 2G $ contains $B+B$ for some $i \in \{1, \ldots, \ell\}$ and infinite 
    $B \subset G$.
\end{enumerate}
\end{lemma}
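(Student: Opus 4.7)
The plan is to prove the cyclic chain of implications \eqref{lemma reductions with shift_1} $\Rightarrow$ \eqref{lemma reductions with shift_2} $\Rightarrow$ \eqref{lemma reductions with shift_3} $\Rightarrow$ \eqref{lemma reductions with shift_1}, exploiting the coset decomposition $G = \bigsqcup_{i=1}^{\ell} (2G + g_i)$ fixed at the start of the subsection together with a pigeonhole argument borrowed from \cref{lemma reductions no shift}.

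For \eqref{lemma reductions with shift_1} $\Rightarrow$ \eqref{lemma reductions with shift_2}, I would start from $t + B + B \subset A$ and write $t = 2s + g_i$ for an appropriate $s \in G$ and $i \in \{1,\dots,\ell\}$. Translating the infinite set $B$ by $s$, set $B' := B + s$, which is still infinite. Then the key algebraic identity
\[
t + b_1 + b_2 \;=\; g_i + 2s + b_1 + b_2 \;=\; g_i + (b_1 + s) + (b_2 + s)
\]
shows that $B' + B' + g_i \subset A$, which is precisely \eqref{lemma reductions with shift_2}.

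For \eqref{lemma reductions with shift_2} $\Rightarrow$ \eqref{lemma reductions with shift_3}, the hypothesis is equivalent to $B + B \subset A - g_i$. Now I would invoke the pigeonhole step used in the proof of \cref{lemma reductions no shift}: since $[G:2G] = \ell < \infty$, there exist an index $j$ and an infinite subset $B'' \subset B$ contained in the single coset $2G + g_j$. Then $B'' + B'' \subset 2G + 2g_j \subset 2G$, and simultaneously $B'' + B'' \subset B + B \subset A - g_i$, so $B'' + B'' \subset (A - g_i) \cap 2G$, with the same index $i$ as before. Finally, \eqref{lemma reductions with shift_3} $\Rightarrow$ \eqref{lemma reductions with shift_1} is immediate by choosing $t := g_i$, since $B + B \subset (A - g_i) \cap 2G$ implies $g_i + B + B \subset A$.

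There is no real obstacle here; the only mildly delicate point is to be careful in \eqref{lemma reductions with shift_2} $\Rightarrow$ \eqref{lemma reductions with shift_3} that the index $i$ is the one provided by \eqref{lemma reductions with shift_2} (it is not changed by the pigeonhole step, which only refines $B$), and to keep track of the fact that the translate $B' = B + s$ produced in \eqref{lemma reductions with shift_1} $\Rightarrow$ \eqref{lemma reductions with shift_2} stays infinite because translation is a bijection on $G$.
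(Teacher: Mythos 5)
Your proof is correct and follows essentially the same route as the paper: the same decomposition $t = 2s + g_i$ with $B' = B + s$ for \eqref{lemma reductions with shift_1} $\Rightarrow$ \eqref{lemma reductions with shift_2}, the same pigeonhole refinement (which the paper packages as a citation of the equivalence \eqref{lemma reductions no shift_1} $\Leftrightarrow$ \eqref{lemma reductions no shift_2} in \cref{lemma reductions no shift}) for \eqref{lemma reductions with shift_2} $\Rightarrow$ \eqref{lemma reductions with shift_3}, and the same trivial observation for \eqref{lemma reductions with shift_3} $\Rightarrow$ \eqref{lemma reductions with shift_1}.
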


\begin{proof}
    The implication \eqref{lemma reductions with shift_1}$\implies$\eqref{lemma reductions with shift_2}
    uses the fact that any $t\in G$ can be written as $2s + g_i$ for some 
    $s \in G$ and $i \in \{1, \ldots,\ell\}$. Thus if we define $B'= B+s$ we 
    get that $B'+B'+g_i = B+B+2s+ g_i \subset A$. Assuming \eqref{lemma reductions with shift_2}, we have that $B+B \subset A-g_i$ and then \eqref{lemma reductions with shift_3} follows directly from the equivalence of \eqref{lemma reductions no shift_1} and \eqref{lemma reductions no shift_2} in \cref{lemma reductions 
    no shift}. Finally,  
    \eqref{lemma reductions with shift_3}$\implies$\eqref{lemma reductions with shift_1} is obvious. 
\end{proof}

Using the previous lemmas we deduce the following proposition.
\begin{proposition} \label{prop equiv formulation}
    Let $ \Phi$ be a F\o lner sequence in $G$, $A \subset G$ and $\beta >0$. Then the following statements are equivalent:
    \begin{enumerate}
        \item \label{prop equiv formulation_2} If $\overline{\diff}_{ \Phi}(A ) > \ell \beta$, then $A$ contains $t+B+B$ for some $t \in G$ and some infinite set $B\subset G$.
        \item \label{prop equiv formulation_3} If $\overline{\diff}_{ \Phi}(A ) > \beta + \frac{\ell - 1}{\ell}$ then $A$ contains 
        $B+B$ for some infinite set $B\subset G$.
        \item \label{prop equiv formulation_1} If $\overline{\diff}_{ \Phi}(A \cap 2G) > \beta$ then $A$ contains $B+B$ 
        for some infinite set $B\subset G$.
    \end{enumerate} 
\end{proposition}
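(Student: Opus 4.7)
The plan is to prove the four implications $(3)\Rightarrow(1)$, $(3)\Rightarrow(2)$, $(2)\Rightarrow(3)$, and $(1)\Rightarrow(3)$, which together yield the three-way equivalence. The starting point is the elementary observation that every coset of the finite-index subgroup $2G$ has density exactly $1/\ell$ along $\Phi$: the \Folner{} property gives $|\Phi_N \cap (2G + g_i)| = |(\Phi_N - g_i) \cap 2G| = |\Phi_N \cap 2G| + \oh_{N\to\infty}(|\Phi_N|)$ for every $i$, and summing over the $\ell$ pairwise disjoint cosets $2G+g_i$ must give $|\Phi_N|$. Consequently, the densities of $2G$ and $G\setminus 2G$ along $\Phi$ exist as actual limits and equal $1/\ell$ and $(\ell-1)/\ell$ respectively.

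The implications $(3)\Rightarrow(1)$ and $(3)\Rightarrow(2)$ then reduce to elementary density manipulations. For $(3)\Rightarrow(2)$, the inclusion $A \subset (A \cap 2G) \cup (G \setminus 2G)$ together with subadditivity of $\limsup$ yields $\overline{\diff}_\Phi(A \cap 2G) \geq \overline{\diff}_\Phi(A) - (\ell-1)/\ell > \beta$, so (3) applied to $A$ itself produces an infinite $B$ with $B+B \subset A$. For $(3)\Rightarrow(1)$, the partition $G = \bigsqcup_i (2G + g_i)$ and subadditivity give an index $i$ with $\overline{\diff}_\Phi(A \cap (2G+g_i)) > \beta$; translating by $-g_i$ (which preserves density along a \Folner{} sequence) yields $\overline{\diff}_\Phi((A - g_i) \cap 2G) > \beta$, so (3) applied to $A - g_i$ gives an infinite $B$ with $g_i + B + B \subset A$.

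For $(2)\Rightarrow(3)$ I would consider $A':= (A \cap 2G) \cup (G \setminus 2G)$, whose density equals $\overline{\diff}_\Phi(A\cap 2G) + (\ell-1)/\ell > \beta + (\ell-1)/\ell$. Applying (2) to $A'$ produces an infinite $B$ with $B+B \subset A'$, and the equivalence of items (2) and (3) of \cref{lemma reductions no shift} then transfers this sumset into $A' \cap 2G = A \cap 2G \subset A$, establishing (3).

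The hard direction is $(1)\Rightarrow(3)$, since (1) only produces a shifted sumset while (3) demands an unshifted one inside $A$. The trick is to consider the auxiliary set $A' := \bigsqcup_{i=1}^\ell \bigl((A \cap 2G) + g_i\bigr)$, a disjoint union of translates of $A \cap 2G$ across all cosets of $2G$. A \Folner{}-type calculation analogous to the one above yields $\overline{\diff}_\Phi(A') = \ell \cdot \overline{\diff}_\Phi(A \cap 2G) > \ell\beta$, so by (1) there exist $t \in G$ and an infinite $B$ with $t + B + B \subset A'$. Pigeonholing $B$ into a single coset $2G + g_{j_0}$ produces an infinite $B_0 \subset B$ with $B_0 + B_0 \subset 2G$, whence $t + B_0 + B_0$ sits in a single coset $2G + g_i$. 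Since $A' \cap (2G + g_i) = (A \cap 2G) + g_i$ by construction, we deduce $t - g_i \in 2G$ and $(t - g_i) + B_0 + B_0 \subset A \cap 2G$. Writing $t - g_i = 2s$ and setting $B':=B_0 + s$ then gives $B' + B' = (t - g_i) + B_0 + B_0 \subset A \cap 2G \subset A$, which establishes (3).
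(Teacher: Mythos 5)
Your proof is correct and follows essentially the same route as the paper: the paper proves exactly the same four implications, using the same auxiliary sets $(A\cap 2G)\cup(G\setminus 2G)$ for the $(2)\Leftrightarrow(3)$ direction and $\bigsqcup_{i=1}^\ell\bigl((A\cap 2G)+g_i\bigr)$ for $(1)\Rightarrow(3)$, and the same translation/sub-additivity manipulations for $(3)\Rightarrow(1)$ and $(3)\Rightarrow(2)$. The only cosmetic difference is that for $(1)\Rightarrow(3)$ the paper delegates the pigeonhole-and-shift steps to its Lemmas \ref{lemma reductions no shift} and \ref{lemma reductions with shift}, whereas you carry them out inline.
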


\begin{proof}
    \eqref{prop equiv formulation_2}$\implies$\eqref{prop equiv formulation_1}: If $\overline{\diff}_{ \Phi}(A \cap 2G ) > \beta$, then we define $\Tilde{A} = \bigsqcup_{i=1}^{\ell} (A \cap 2G) + g_i$, and we have $\overline{\diff}_{ \Phi}(\Tilde{A} ) > \ell \beta$. By \eqref{prop equiv formulation_2}, $\Tilde{A}$ contains a $t+B+B$ for some infinite $ B \subset G$ and $t \in G$.
    By \cref{lemma reductions with shift} we can reduce to the case where $t= g_i$ for some $i \in \{1, \ldots, \ell\}$. Using 
    \cref{lemma reductions no shift}, we can also assume that $B+B \subset 2G$. Therefore, we have that $B+B+ g_i \subset (A \cap 2G) + g_i$ which concludes the proof.

    \eqref{prop equiv formulation_1}$\implies$\eqref{prop equiv formulation_2}: If $\overline{\diff}_{ \Phi}(A ) > \ell \beta$, then, by sub-additivity of the density, $\overline{\diff}_{ \Phi}(A \cap (2G + g_i) ) > \beta$ for some $i \in \{1, \ldots, \ell\}$. By translation invariance
    of the density, we see that $\overline{\diff}_{\Phi}((A-g_i) \cap 2G  ) > \beta$ and therefore by \eqref{prop equiv formulation_1}, $A$ contains $B+B+g_i$ for some $ B \subset G$ infinite.
    
    \eqref{prop equiv formulation_3}$\implies$\eqref{prop equiv formulation_1}: If $\overline{\diff}_{ \Phi}(A \cap 2G) > \beta$,
    then $\overline{\diff}_{ \Phi}((A \cap 2G) \cup(G \backslash 2G)) > \beta + \frac{\ell - 1}{\ell}$. Therefore, by $(2)$, $(A \cap 2G) \cup(G \backslash 2G)$ contains $B+B$ and we conclude using \cref{lemma reductions no shift}.

    \eqref{prop equiv formulation_1}$\implies$\eqref{prop equiv formulation_3}: If $\overline{\diff}_{ \Phi}(A ) > \beta + \frac{\ell - 1}{\ell}$ then, using sub-additivity of the density and 
    the fact that each coset has density $\frac{1}{\ell}$, we get
    $\overline{\diff}_{ \Phi}(A \cap 2G) > \beta$, so we conclude using \eqref{prop equiv formulation_1}.
\end{proof}

From \cref{prop equiv formulation}, it is now immediate that 
\eqref{main_theorem_1_2}, \eqref{main_theorem_1_1} and 
\eqref{main_theorem_evens} of \cref{main_theorem_1} are equivalent. 
Therefore, it suffices to prove \eqref{main_theorem_1_1} in order to 
establish the theorem.

\subsection{Terminology and background from ergodic theory}
\label{subsec_background}

Throughout, let $G$ be a countable abelian group. 
Given a compact metric space $X=(X,d_X)$, 
a \textit{continuous action} $T=(T_g)_{g\in G}$ of $G$ on X
is a collection of continuous functions $T_g:X\to X$ such that 
for any $g_1, g_2 \in G$, $T_{g_1} \circ T_{g_2} = T_{g_1+g_2}$.
Given such an action, we call the pair $(X,T)$ a \textit{topological $G$-system.}

Fix a topological $G$-system $(X,T)$. A measure $\mu$ in the space of 
Borel probability measures on $X$ is said to be $T$-invariant, 
if it is invariant under $T_g$ for all $g\in G$. 
The Borel $\sigma$-algebra on $X$ is denoted by $\B_X$ or just $\B$, 
if no confusion may arise. The action $T$ on the Borel probability space 
$(X,\mu)$ is called a \textit{measure-preserving $G$-action} and 
$\xmt$ is called a \textit{measure-preserving $G$-system}. For simplicity, we refer to the above as $G$-actions, and $G$-systems, respectively. 
A $G$-system $\xmt$ is called \textit{ergodic} 
if for any measurable set $A$
the following holds:
$$ T_g ^{-1} A =A \text{ for all } g\in G \implies \mu(A)=0 \text{ or }
\mu(A)=1.$$
Given a $G$-system $\xmt$ and a \Folner{} sequence $\Phi$, a point $a\in X$ is called 
\textit{generic with respect to $\mu$ along $\Phi$} 
if for all $f\in C(X)$ we have 
$$ \lim_{N\to \infty} \frac{1}{|\Phi_N|} \sum_{g\in \Phi_N} f(T_g a)
= \int_{X} f \d \mu, $$
or equivalently if 
$$ \lim_{N\to \infty} \frac{1}{| \Phi_N|} \sum_{g\in \Phi_N}
\delta_{T_g a}=\mu, $$
where $\delta_x$ is the Dirac mass at $x\in X$ and the limit 
is in the weak$^\ast$ topology. If $a$ is generic 
for $\mu$ along $\Phi$, then we denote this by 
$a\in \gen(\mu,\Phi)$. Moreover, 
we let $\text{supp}(\mu)$ denote the 
\textit{support} of $\mu$, that is, the 
smallest, closed, full-measure 
(with respect to $\mu$) subset of $X$.

Given two $G$-systems $\xmt$ and $\yns$, we say that $\yns$ is a
\textit{factor} of $\xmt$ if there exists a measurable map 
$\pi:X\to Y$, which we call \textit{factor map}, such that
$\mu(\pi^{-1}E)=\nu(E)$ for any measurable $E\subset Y$, and for any $g\in G$,
$\pi\circ T_g = S_g\circ\pi$ holds $\mu$-almost everywhere on $X$. We say that $\nu$ 
is the pushforward of $\mu$
under $\pi$, and we write $\pi\mu=\nu$. When, additionally, the 
factor map $\pi$ is continuous and $\pi\circ T_g=S_g\circ\pi$ holds 
everywhere on $X$ for any $g\in G$, we say that $\pi$ is a \textit{continuous
factor map} and $\yns$ is a \textit{continuous factor} of $\xmt$. 

An important class of 
factors is those with the 
structure of group rotations. In 
particular, for any ergodic 
$G$-system $\xmt$ we utilize 
its \textit{Kronecker factor}, 
which is the maximal factor of the system that is isomorphic to an abelian 
group rotation (see \cite[Theorem 1]{mackey}): There exists a compact 
abelian group $Z$ and a group homomorphism 
$\theta:G\to Z$ with dense 
image, such that the Kronecker factor of $\xmt$ is measurably isomorphic 
to $(Z,m,R)$, where $m$ is 
the normalized Haar measure on $Z$ and $R$ is the rotation by $\theta$, i.e. 
for all $z\in Z$ and $g\in G$,
\begin{equation} \label{eq kroneker}
R_g(z) = \theta(g) + z.  
\end{equation}
In the previous setting, the acting group $G$ is countable, but the group $Z$ is not necessarily countable.

\subsection{Dynamical reformulation of \cref{main_theorem_1}}
\label{section_dynamical_st}
To prove \cref{main_theorem_1}, 
we follow an ergodic theoretic approach. In particular, we translate 
the problem of finding infinite sumset configurations in subsets of a group 
$G$ to a statement in ergodic theory, and particularly
to the existence of Erd\H{o}s progressions in 
products of $G$-systems. These ideas and 
basic tools necessary for their implementation were 
developed in \cite{kmrr1} and 
\cite{kmrr2} in the context of finding infinite 
configurations in $\N$, and were 
subsequently exploited for finding other patterns 
in $\N$ (see \cite{kousek_radic2024}) and 
generalized in the context of countable amenable 
groups (see \cite{charamaras_mountakis2024}).

Again, let $(G,+)$ be an abelian group with $\ell=[G: 2G] < \infty$ and $r=|\ker(D)|<\infty$. For the rest of 
this section $\Sigma$ denotes the space $\{0,1\}^{G}$, and it is endowed 
with 
the product topology, so that it becomes a compact metrizable space. 
We also consider the shift action 
$S\colon \Sigma \to \Sigma$ given by $S_g(x(h))=x(h+g)$, for any 
$h,g\in G$, $x=(x(g))_{g\in G} \in \Sigma$, and note that 
$S$ is an action of $G$ on $X$ by homeomorphisms.

We use the following variant of Furstenberg's correspondence principle, (originally introduced in \cite{furstenberg_szemeredi_proof}), to reduce 
\cref{main_theorem_1} to a dynamical statement.
This variant crucially exploits the special 
structure of q.i.d. \Folner{} sequences. A similar version appears in \cite[Lemma 2.7]{kousek_radic2024}.

\begin{lemma}\label{correspondence}
Let $A\subset G$ and $\Phi$ be any \Folner{} sequence in $G$ that is q.i.d. with ratio 
$\alpha_{\Phi}$. Then there exist a $\beta\geq \alpha_{\Phi}$,
an ergodic $G$-system $(\Sigma 
\times \Sigma, \mu, S^2 \times S)$, an open set $E\subset \Sigma$, a point $a\in \Sigma$ and a \Folner{} sequence 
$\Phi'$, such that $(a,a)\in \gen(\mu,\Phi')$, $A=\{g\in G\colon S_g a \in E\}$ and 
\begin{equation}\label{dbound_no_shift}
    \ell\mu(\Sigma\times E) + \mu(E\times\Sigma) 
    \geq
    \frac{\ell+r}{\beta}\left(\overline{\diff}_\Phi(A)-1\right) + \ell + 1.
\end{equation}
\end{lemma}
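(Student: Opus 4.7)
The plan is to invoke a Furstenberg-style correspondence, averaging along a sub-Følner sequence dictated by the q.i.d.\ structure of $\Phi$. Take $a := \mathbf{1}_A \in \Sigma$ and $E := \{x \in \Sigma : x(e_G) = 1\}$, which is clopen and satisfies $A = \{g \in G : S_g a \in E\}$; note also that $(S^2 \times S)_g(a, a) = (S_{2g} a, S_g a)$. Define $\Phi'_N := \Phi_N \cap (\Phi_N/2)$. The key point is that $\Phi'$ is itself a Følner sequence in $G$: the inclusion
$$\Phi'_N \triangle (\Phi'_N + h) \subset \bigl(\Phi_N \triangle (\Phi_N + h)\bigr) \cup D^{-1}\bigl(\Phi_N \triangle (\Phi_N + 2h)\bigr),$$
combined with $|\ker D| = r < \infty$ and the Følner property of $\Phi$, shows that both terms on the right are $\oh(|\Phi_N|)$, while $|\Phi'_N| \geq (\alpha_\Phi + \oh(1))|\Phi_N|$ keeps $|\Phi'_N \triangle (\Phi'_N + h)|/|\Phi'_N| \to 0$. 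By passing to a subsequence I may assume that $|\Phi'_N|/|\Phi_N| \to \beta \geq \alpha_\Phi$, that $|A \cap \Phi_N|/|\Phi_N| \to \overline{\diff}_\Phi(A)$, and that the empirical measures $\frac{1}{|\Phi'_N|}\sum_{g \in \Phi'_N}\delta_{(S_{2g} a, S_g a)}$ converge in the weak-$\ast$ topology to an $(S^2 \times S)$-invariant probability measure $\mu$ on $\Sigma \times \Sigma$; then $(a, a) \in \gen(\mu, \Phi')$ by construction.

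To upgrade $\mu$ to an ergodic measure, I would apply the ergodic decomposition $\mu = \int \mu_\omega\, d\eta(\omega)$ and refine $\Phi'$ one more time, mirroring the correspondence arguments of \cite{kmrr1,kousek_radic2024,charamaras_mountakis2024}: one replaces $\mu$ by a suitable ergodic component $\mu_\omega$ along a further subsequence on which $(a, a)$ remains generic and the eventual bound \eqref{dbound_no_shift} is preserved (and at worst sharpened). This ergodic extraction is the main technical obstacle, since genericity of a single orbit point does not in general descend automatically to an ergodic component.

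Finally, the required inequality reduces to a counting estimate. Since $E$ is clopen, $\mathbf{1}_{\Sigma \times E}$ and $\mathbf{1}_{E \times \Sigma}$ are continuous, so genericity gives
$$\mu(\Sigma \times E) = \lim_N \frac{|A \cap \Phi'_N|}{|\Phi'_N|}, \qquad \mu(E \times \Sigma) = \lim_N \frac{|\{g \in \Phi'_N : 2g \in A\}|}{|\Phi'_N|}.$$
Writing $B := G \setminus A$, the inclusion $\Phi'_N \subset \Phi_N$ yields $|B \cap \Phi'_N| \leq (1 - \overline{\diff}_\Phi(A) + \oh(1))|\Phi_N|$. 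Since $D$ is $r$-to-$1$ onto $2G$ and $D(\Phi'_N) \subset \Phi_N$, we also get $|\{g \in \Phi'_N : 2g \in B\}| \leq r|B \cap \Phi_N \cap 2G| \leq r(1 - \overline{\diff}_\Phi(A) + \oh(1))|\Phi_N|$. Dividing each estimate by $|\Phi'_N| \sim \beta|\Phi_N|$ and adding with weights $\ell$ and $1$ gives
$$\ell\bigl(1 - \mu(\Sigma \times E)\bigr) + \bigl(1 - \mu(E \times \Sigma)\bigr) \leq \frac{\ell + r}{\beta}\bigl(1 - \overline{\diff}_\Phi(A)\bigr),$$
which rearranges to exactly \eqref{dbound_no_shift}.
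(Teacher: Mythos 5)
Your construction of $a$, $E$, $\Phi'_N = \Phi_N \cap (\Phi_N/2)$, the direct verification that $\Phi'$ is a \Folner{} sequence via the set-theoretic inclusion for $\Phi'_N \triangle (\Phi'_N + h)$ together with $|\ker D| = r < \infty$, and the final counting estimate for the not-necessarily-ergodic weak$^*$ limit are all correct and match the paper's argument in spirit; your counting via $B = G\setminus A$, $|B\cap\Phi'_N| \le |B\cap\Phi_N|$ and $|\{g\in\Phi'_N : 2g\in B\}| \le r|B\cap\Phi_N\cap 2G|$ is in fact a slightly slicker packaging of the paper's inequalities.

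The genuine gap is the ergodic extraction, and your sketch of it is not correct as stated. You propose to replace $\mu$ (the weak$^*$ limit along $\Phi'$) by an ergodic component $\mu_\omega$ ``along a further subsequence on which $(a,a)$ remains generic.'' This cannot work: once $\frac{1}{|\Phi'_N|}\sum_{g\in\Phi'_N}\delta_{(S_{2g}\times S_g)(a,a)}$ converges to $\mu$, every subsequence of $\Phi'$ produces the same limit $\mu$, so no subsequence can make $(a,a)$ generic for a strict ergodic component. The correct mechanism, which the paper uses, decouples the two issues: one first establishes \eqref{dbound_no_shift} for the possibly non-ergodic limit $\mu'$ (exactly your counting step, which should therefore precede the ergodic extraction), then observes that $\mu \mapsto \ell\mu(\Sigma\times E) + \mu(E\times\Sigma)$ is affine and $E$ is clopen, so integrating the bound over the ergodic decomposition of $\mu'$ yields an ergodic component $\mu$ for which \eqref{dbound_no_shift} still holds, and finally — since every ergodic component of $\mu'$ is supported on the orbit closure of $(a,a)$ — one invokes a separate result (\cite[Prop.\ 3.9]{Fur1}) producing a \emph{new} \Folner{} sequence $\Phi'$ (unrelated to $\Phi_N \cap \Phi_N/2$) along which $(a,a)$ is generic for this ergodic $\mu$. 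As presented, your proof puts the counting after the ergodic extraction, so it is unclear which measure the bound is being established for, and the ``further subsequence'' claim hides the fact that a genuinely different \Folner{} sequence is needed in the conclusion.
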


\begin{proof}

By definition, and passing to a subsequence of $(\Phi_N)$ if necessary, we may assume that there exists some $\beta \geq \alpha_{\Phi}$ so that
$$\lim_{N\to\infty}\frac{|\Phi_N/2 \cap \Phi_N|}{|\Phi_{N}|}=\beta~~ \text{ and } ~~\overline{\diff}_\Phi(A) = \lim_{N\to\infty}\frac{|A\cap\Phi_{N}|}{|\Phi_{N}|}.$$
Associate to the set $A$ a point $a\in\Sigma=\{0,1\}^G$ via 
$$a(g) = \begin{cases}
    1, & \text{if}~g\in A, \\
    0, & \text{otherwise}.
\end{cases}$$

Define the clopen set $E=\{x\in\Sigma\colon x(e_G)=1\}$ and observe that, by 
construction, $A=\{g\in G\colon S_ga\in E\}$. Since $\Phi$ is quasi-invariant with respect
to doubling, by \cref{lemma_phi/2_phi} we have that $\Psi=(\Psi_N)_{N\in\N}$ 
given by $\Psi_N = \Phi_N/2\cap\Phi_N$ is a \Folner{} sequence, and then we 
can define the sequence of Borel probability measures $(\mu_N)_{N\in \N}$ on 
$\Sigma\times\Sigma$ given by 
$$\mu_N = \frac{1}{|\Psi_{N}|}\sum_{g\in \Psi_{N}}\delta_{(S_{2g}\times S_g)(a,a)}.$$
We let $\mu'$ be a weak* accumulation point of $(\mu_N)_{N\in \N}$, and  
then it is easy to see that $\mu'$ is an $S^2 \times S$-invariant measure.
Recall that
$\lim_{N\to\infty}\frac{|\Psi_{N}|}{|\Phi_{N}|}=\beta .$

For each $N$, $\Psi_{N}\subset\Phi_{N}$, so we have
$$\mu_N(\Sigma\times E) = \frac{1}{|\Psi_{N}|}\sum_{g\in \Psi_{N}}\delta_{S_ga}(E)
\geq \frac{|\Phi_{N}|}{|\Psi_{N}|}\frac{1}{|\Phi_{N}|}(|A\cap\Phi_{N}| - |\Phi_{N}| + |\Psi_{N}|),$$
and then sending $N\to\infty$ yields
\begin{equation}\label{dbound_eq1}
    \mu'(\Sigma\times E) \geq \frac{1}{\beta}\left(\overline{\diff}_\Phi(A)-1+\beta \right)
    = \frac{\overline{\diff}_\Phi(A)}{\beta} - \frac{1}{\beta} + 1.
\end{equation}
From \cref{lemma aux folner} we know that $N\mapsto F_{N}=\bigcup_{g\in \Psi_{N}}
g+\ker(D) \supset \Psi_{N}$ is a \Folner{} in $G$, and 
$\frac{|F_{N}|}{|\Psi_{N}|}\to 1$ as $N\to \infty$. From the definition of 
$F_{N}$ we get $\sum_{g\in F_{N}} \delta _{S_{2g}a} (E)= r 
\sum_{g\in 2 \Psi_{N}} \delta _{S_{g}a} (E)$, and therefore
\begin{equation*}
    \left| \frac{1}{|\Psi_{N}|} \sum_{g\in \Psi_{N}} \delta _{S_{2g}a} (E) 
    - \frac{r}{|\Psi_{N}|} \sum_{g\in 2 \Psi_{N}} \delta _{S_{g}a} (E)
    \right| \leq 
    \frac{|F_{N} \setminus \Psi_{N}|}{|\Psi_{N}|}
\end{equation*}
which goes to $0$ as $N\to \infty$.
In addition, for each $N\in\N$, $2\Psi_{N} = 2(\Phi_{N}/2\cap\Phi_{N}) 
\subset \Phi_{N}\cap 2\Phi_{N} \subset \Phi_{N}$, so combining with the 
previous we get
\begin{align*}
    \mu_N(E\times\Sigma) = \frac{1}{|\Psi_{N}|}
    \sum_{g\in\Psi_{N}}\delta_{S_{2g}a}(E)
    & = \frac{r}{|\Psi_{N}|}\sum_{g\in 2\Psi_{N}}\delta_{S_ga}(E) + \oh_{N\to\infty}(1) \notag \\
    & \geq r\frac{|\Phi_{N}|}{|\Psi_{N}|}\frac{1}{|\Phi_{N}|}(|A\cap\Phi_{N}| - |\Phi_{N}| + |2\Psi_{N}|) + \oh_{N\to\infty}(1),
\end{align*}
and then sending $N\to\infty$ yields
\begin{equation}\label{dbound_eq2}
    \mu'(E\times\Sigma) 
    \geq \frac{r}{\beta}\bigg(\overline{\diff}_\Phi(A) - 1 + 
    \frac{\beta}{r}\bigg)
    = \frac{r\overline{\diff}_\Phi(A)}{\beta} - \frac{r}{\beta} + 1,
\end{equation}
where we used that by \cref{lemma aux folner}, $\frac{|2\Psi_{N}|}{|\Psi_{N}|} \to \frac{1}{r}$, so $\frac{|2\Psi_{N}|}{|\Phi_{N}|} \to
\frac{\beta}{r}$ as $k\to \infty$.
Combining \eqref{dbound_eq1} and \eqref{dbound_eq2} 
we obtain \eqref{dbound_no_shift} for $\mu'$. 
Although $\mu'$ is not necessarily ergodic, we 
can use its ergodic 
decomposition to find an $(S^2 \times S)$-ergodic 
component of it, call it $\mu$, so that \eqref{dbound_no_shift} holds for 
$\mu$ as well. Without loss of generality we may 
assume that 
$\mu$ is supported on the orbit closure of $(a,a)$, 
since this holds for $\mu'$ by construction. Then by a standard argument (see \cite[ Proposition 3.9]{Fur1}) we see there is a \Folner{} sequence $\Phi'$
in $G$ such that $(a,a) \in \gen(\mu,\Phi')$. 
\end{proof}

We now state the main dynamical result, which, along with Lemma
\ref{correspondence} and Lemma \ref{EP and B+B} below,
allows us to prove \cref{main_theorem_1}.
For that, we need the notion of Erd\H{o}s progressions, which 
were introduced in $\N$ by the authors in \cite{kmrr2}.
Given a topological $G$-system $(X,T)$, a triple 
$(x_0,x_1,x_2) \in X^3$ is called a 
($3$-term) Erd\H{o}s progression if there exists 
an infinite sequence $(g_n)_{n\in \N}$ in $G$ (that is, the set 
$\{g_n : n\in \N\}$ is infinite)
such that $(T_{g_n} \times T_{g_n})(x_0,x_1) \xrightarrow{} (x_1,x_2)$ 
as $n\to \infty$.

\begin{theorem} \label{analogue of 2.1 KR}
Let $(X,\mu,T)$ be an ergodic $G$-system, $a\in \gen(\mu,\Phi)$ for some \Folner{} sequence $\Phi$ in $G$,  
and $E_1,E_2 \subset X$ be open sets satisfying
\begin{equation}\label{1_1}
\ell \mu(E_2)+\mu(E_1) > \ell.
\end{equation}
Then, there exists an \Erdos{} progression $(a,x_1,x_2)$ such that $(x_1,x_2) \in E_1 \times E_2$.
\end{theorem}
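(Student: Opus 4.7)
My plan is to prove \cref{analogue of 2.1 KR} via an ergodic-theoretic construction of Erdős progressions, following the template of \cite{kmrr2} and its adaptations to abelian groups in \cite{charamaras_mountakis2024, kousek_radic2024}. The central device is a $T\times T$-invariant self-joining $\lambda$ of $\mu$ on $X\times X$ with two properties: $\lambda(E_1\times E_2)>0$, and, along a \Folner{} subsequence $\Phi'$ of $\Phi$, the pair $(a,x_1)$ is generic for an ergodic component of $\lambda$ for a positive-mass set of $x_1\in E_1$. Granted this, for such $x_1$ the orbit $\{(T_g a, T_g x_1)\}_{g\in G}$ visits $B(x_1,\varepsilon)\times E_2$ along $\Phi'$ with positive frequency for every $\varepsilon>0$; a diagonal extraction produces an infinite sequence $(g_n)\subset G$ with $T_{g_n}a \to x_1$ and $T_{g_n}x_1 \to x_2 \in E_2$, giving the required Erdős progression.

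The joining $\lambda$ is built from the Kronecker factor $(Z, m, R)$ of $(X,\mu,T)$, with factor map $\pi\colon X\to Z$ and $R_g z = \theta(g)+z$. Since $[G:2G]=\ell$ and $\theta(2g)=2\theta(g)$, the closure $\overline{\theta(2G)}$ equals $2Z$, a closed subgroup of index dividing $\ell$. The Erdős progression condition projects on the Kronecker factor to the affine relation $\pi(x_2)=2\pi(x_1)-\pi(a)$, which pins $\pi(x_2)$ to a fixed coset of $2Z$. Taking $\lambda$ to be the self-joining of $\mu$ relatively independent over $Z$ and concentrated on this affine condition, the hypothesis $\ell\mu(E_2)+\mu(E_1)>\ell$ forces $\lambda(E_1\times E_2)>0$: writing $f_i := \E[\mathbf{1}_{E_i}\mid Z]$, a change-of-variable through the $\ell$-to-$1$ doubling map $Z\to 2Z$ gives $\int_Z f_2(2z-\pi(a))\,dm(z)\geq \ell\mu(E_2)-(\ell-1)$, and combining with $\int_Z f_1\,dm = \mu(E_1)$, a pigeonhole on the $[0,1]$-valued integrands yields $f_1(z)\,f_2(2z-\pi(a))>0$ on a set of positive $m$-measure, whence $\lambda(E_1\times E_2) = \int_Z f_1(z)\,f_2(2z-\pi(a))\,dm(z) > 0$.

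The main obstacle will be the genericity step: arranging that $(a,x_1)$ is generic for an appropriate ergodic component of $\lambda$ along a \Folner{} subsequence of $\Phi$, for some $x_1\in E_1$. Using $a\in\gen(\mu,\Phi)$ together with a disintegration of $\lambda$ over its first marginal $\mu$, one passes to a \Folner{} subsequence $\Phi'$ along which the empirical measures $\frac{1}{|\Phi'_N|}\sum_{g\in\Phi'_N}\delta_{(T_g a, T_g x_1)}$ converge weakly for $\mu$-a.e.\ $x_1$; identifying the limit with the desired ergodic component of $\lambda$ --- and in particular ruling out trivial collapse to the diagonal self-joining --- requires a careful analysis of the behavior of $T_g x_1$ on the Kronecker fiber over $\pi(x_1)$, following the pattern established in \cite{kmrr2} for $G=\Z$ and extended in \cite{charamaras_mountakis2024} to the general abelian case.
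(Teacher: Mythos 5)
Your construction of the self-joining (the paper's $\sigma_a$) and your positivity estimate $\sigma_a(E_1\times E_2)>0$ follow the same blueprint the paper uses: $\sigma_a$ is built from the Kronecker disintegration with the affine relation $\pi(x_2)=2\pi(x_1)-\pi(a)$, and the goal is $\sigma_a(E_1\times X)+\sigma_a(X\times E_2)>1$. Where you differ is the route to the second marginal bound. The paper proves $\frac{1}{\ell}\sum_{i=1}^\ell T_{g_i}\pi_2\sigma_a=\mu$ via a careful coset analysis (its Lemma on writing $Z$ as a disjoint union of $\theta(g_{i_j})+2Z$), while you push $m$ forward through the doubling map $Z\to 2Z$ directly and then pigeonhole on $f_1$ and $f_2(2\cdot-\pi(a))$. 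Your computation is cleaner, but the phrase ``$\ell$-to-$1$'' is not correct in general: the measure-theoretic degree of $D\colon Z\to 2Z$ is $[Z:2Z]$, which only divides $\ell$ and can be strictly smaller (precisely when $\widetilde\theta\colon G/2G\to Z/2Z$ has nontrivial kernel). The change of variable gives $\int_Z f_2(2z-\pi(a))\,dm = [Z:2Z]\int_{\pi(a)+2Z}f_2\,dm\geq [Z:2Z]\,\mu(E_2)-([Z:2Z]-1)$, which still dominates $\ell\mu(E_2)-(\ell-1)$ since $[Z:2Z]\leq\ell$ and $\mu(E_2)\leq 1$, so your bound survives the correction; but the argument as written silently assumes $[Z:2Z]=\ell$.

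The genuine gap is the reduction to the case of a \emph{continuous} factor map to the Kronecker factor. You define $\sigma_a$ and the support/genericity apparatus directly in terms of $\pi\colon X\to Z$, but for a general ergodic system $\pi$ is only measurable. Continuity of $\pi$ is essential for two steps you lump into ``the genericity step'': (i) $\sigma_a$-a.e.\ $(x_1,x_2)$ lies in $\textup{supp}(\lambda_{(a,x_1)})$, where $\lambda_{(a,x_1)}$ is the ergodic component of $\mu\times\mu$ (not of $\sigma_a$ — your proposal conflates these), and (ii) there is a single \Folner{} sequence along which $(a,x_1)$ is generic for $\lambda_{(a,x_1)}$ for $\mu$-a.e.\ $x_1$. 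Both statements are topological in nature and fail meaning without continuous $\pi$. The paper therefore proves the result first under the extra hypothesis of a continuous Kronecker factor map (Theorem~\ref{analogue of 3.4 KR}), and then obtains the general case by passing to an ergodic extension $(\widetilde X,\widetilde\mu,\widetilde T)$ with a continuous factor map to \emph{its} Kronecker factor and a lifted generic point $\widetilde a$, via \cite[Proposition 3.7]{charamaras_mountakis2024}, and pulling back the Erdős progression through the continuous extension map. That reduction is not a ``careful analysis to be filled in''; it is a structural step of the proof, and your proposal does not account for it.
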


We note here that \cref{analogue of 2.1 KR} was already suggested
by Tao (see the discussion after Theorem 7 in 
\cite{Tao_sumsets}).
We postpone the proof of \cref{analogue of 2.1 KR} to 
\cref{dynamical results}.

\begin{lemma}\cite[Lemma $3.4$]{charamaras_mountakis2024} \label{EP and B+B} 
Let $(X,T)$ be a topological $G$-system and let $E,F\subset X$ be open. Assume 
there exists an Erd\H{o}s progression $(x_0,x_1,x_2) \in X^3$ with $x_1\in E$ 
and $x_2\in F$. Then, there exists an infinite sequence $B=(b_n)_{n\in \N} 
\subset \{g\in G: T_g(x_0) \in E\}$ such that $B \oplus B =
\{b_n + b_m \colon n,m \in \N, n \neq m\} \subset \{g\in G: 
T_g(x_0)\in F\}$.
\end{lemma}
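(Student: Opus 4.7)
The plan is to construct $B = (b_n)_{n\in\N}$ inductively, drawing each $b_n$ from the witnessing sequence $(g_n)_{n\in\N}$ of the Erd\H{o}s progression. Unwinding the definition, the hypothesis provides a sequence $(g_n)_{n\in\N}$ in $G$ with infinite range such that $T_{g_n}x_0 \to x_1$ and $T_{g_n}x_1 \to x_2$. I would fix a compatible metric $d$ on $X$ and choose $\epsilon > 0$ small enough that the $\epsilon$-ball around $x_1$ is contained in $E$ and the $2\epsilon$-ball around $x_2$ is contained in $F$, which is possible because $E$ and $F$ are open.

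For the base step, pick $n_1$ large enough that $d(T_{g_{n_1}}x_0, x_1) < \epsilon$ and $d(T_{g_{n_1}}x_1, x_2) < \epsilon$, and set $b_1 := g_{n_1}$. For the inductive step, assume $b_1 = g_{n_1},\ldots,b_k = g_{n_k}$ have been chosen with $T_{b_i}x_0 \in E$ and $d(T_{b_i}x_1, x_2) < \epsilon$ for all $i \leq k$, and with $T_{b_i + b_j}x_0 \in F$ for all $i \neq j$ in $\{1,\ldots,k\}$. Since the finitely many maps $T_{b_1},\ldots,T_{b_k}$ are continuous at $x_1$, one can fix $\delta \in (0,\epsilon)$ such that $d(y, x_1) < \delta$ implies $d(T_{b_i}y, T_{b_i}x_1) < \epsilon$ for every $i \leq k$. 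Then pick $n_{k+1} > n_k$ large enough that $d(T_{g_{n_{k+1}}}x_0, x_1) < \delta$ and $d(T_{g_{n_{k+1}}}x_1, x_2) < \epsilon$, and also so that $g_{n_{k+1}} \notin \{b_1,\ldots,b_k\}$; the latter uses that the range of $(g_n)$ is infinite. Set $b_{k+1} := g_{n_{k+1}}$.

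The inductive conclusion then follows: $T_{b_{k+1}}x_0 \in E$ because $d(T_{b_{k+1}}x_0, x_1) < \delta < \epsilon$, and for each $i \leq k$, commutativity of the $G$-action yields
\[ T_{b_i + b_{k+1}} x_0 \;=\; T_{b_i}\bigl(T_{b_{k+1}} x_0\bigr), \]
which lies within $\epsilon$ of $T_{b_i}x_1$ by the choice of $\delta$, and hence within $2\epsilon$ of $x_2$, and hence in $F$. The only real subtlety is the absence of any uniform continuity of the action: one cannot fix a single $\delta$ that works for all future shifts $T_{b_i}$, so the tolerance must be chosen \emph{after} $b_1,\ldots,b_k$ are already pinned down and allowed to depend on them. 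I expect this diagonalization, together with the bookkeeping to keep each $T_{b_i}x_1$ close to $x_2$ so that future $T_{b_i + b_{k+1}}x_0$ fall into $F$, to be the only genuine point of care.
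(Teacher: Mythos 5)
Your proof is correct and follows the standard inductive construction that is used in the cited paper \cite{charamaras_mountakis2024} (which in turn adapts the argument from \cite{kmrr2}). The key points — choosing $\epsilon$ so that $B(x_1,\epsilon)\subset E$ and $B(x_2,2\epsilon)\subset F$, maintaining $d(T_{b_i}x_1,x_2)<\epsilon$ as an inductive invariant, selecting $\delta$ after $b_1,\dots,b_k$ are fixed to compensate for the lack of uniform continuity, and using that $\{g_n:n\in\N\}$ is infinite to choose $b_{k+1}$ distinct from its predecessors — are all present and handled correctly.
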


As Lemma \ref{EP and B+B} suggests, the existence  
of the \Erdos{} progressions in 
the context of \cref{analogue of 2.1 KR} is what allows us 
to recover the combinatorial statements of \cref{main_theorem_1}. 

\begin{proof}[Proof that \cref{analogue of 2.1 KR} implies \cref{main_theorem_1}]
Let $G, \ell, r, \Phi$ and $\alpha_{\Phi}$ be as in the assumptions of 
\cref{main_theorem_1}. Let also $A\subset G$ with 
$\overline{\diff}_\Phi(A) > 1 - \frac{\alpha_\Phi}{\ell+r}.$
Using \cref{correspondence} we can then find $\beta\geq \alpha_{\Phi}$,
an ergodic $G$-system $(\Sigma 
\times \Sigma, \mu, S^2 \times S)$, an open set $E\subset \Sigma$, a point $a\in \Sigma$ and a \Folner{} sequence 
$\Phi'$ in $G$, such that $(a,a)\in \gen(\mu,\Phi')$, $A=\{g\in G\colon S_g a \in E\}$ and 
\begin{equation}
    \ell\mu(\Sigma\times E) + \mu(E\times\Sigma) 
    \geq
    \frac{\ell+r}{\beta}\left(\overline{\diff}_\Phi(A)-1\right) + \ell + 1.
\end{equation}
From the assumption on $\overline{\diff}_{\Phi}(A)$ we get that 
$\ell\mu(\Sigma\times E) + \mu(E\times\Sigma) >\ell$, so using \cref{analogue of 2.1 KR} for the system $(\Sigma \times \Sigma, \mu, S^2 \times S)$, the open sets $\Sigma \times E, E \times \Sigma$ and the point $(a,a)\in 
\gen(\mu, \Phi ')$ we get 
that there is an \Erdos{} progression $((a,a), (x_{11}, x_{12}), (x_{21}, x_{22})) \in \Sigma ^6 $ with 
$$(x_{11}, x_{12}, x_{21}, x_{22})\in E \times \Sigma \times \Sigma \times E.$$
We can now apply \cref{EP and B+B} for the sets $U=E\times \Sigma$ and 
$V=\Sigma \times E$ to get an infinite sequence $B=(b_n)_{n\in \N}$
so that 
\begin{equation}\label{eq_useful_2B}
    B \subset \{g\in G: S_{2g}\times S_{g} (a,a) \in E\times \Sigma\}
    =\{ g\in G: S_{2g} a \in E \}
\end{equation}
and 
\begin{equation}\label{eq_useful_B+B}
    B \oplus B \subset \{g\in G: S_{2g}\times S_{g} (a,a) \in \Sigma \times E\}
    =\{g\in G: S_{g} a \in E\}=A.
\end{equation}
Let us denote by $2B$ the set $2B=\{2b: b\in B\}$.
From \eqref{eq_useful_2B} we get that 
$2B\subset A$, so combining with \eqref{eq_useful_B+B} and the fact that $B+B=
(B \oplus B) \cup 2B$ we get that $B+B\subset A$. This establishes part 
\eqref{main_theorem_1_1} of \cref{main_theorem_1}, and since by \cref{prop equiv formulation} 
the three statements of this theorem are equivalent, it concludes the proof.
\end{proof}

\section{Measures on \Erdos{} progressions and the proof of the dynamical statement}
\label{section_proof_of_dynamical_st}

In this section we prove \cref{analogue of 2.1 KR}. In 
\cref{properties of sigma} we collect some useful tools, and then 
in \cref{dynamical results} we present the proof of the theorem.

\subsection{Erd\H{o}s progressions, measures and their 
properties} \label{properties of sigma}

Again, let us fix a countable abelian group $(G,+)$ with $\ell=[G:2G]<\infty$, and
${g}_1=e_{G},g_2, \ldots, {g}_{\ell}\in G$ such that
$G = \bigsqcup_{i=1}^{\ell} ({g}_i + 2G)$.
Moreover, we fix an ergodic $G$-system $\xmt$ admitting a 
continuous factor map $\pi: X\to Z$ to its Kronecker factor $\zmr$,
a \Folner{} sequence $\Phi$ in $G$ and a point $a\in \textbf{gen}
(\mu,\Phi)$.

In \cref{analogue of 2.1 KR} we care about Erd\H{o}s progressions
with first coordinate equal to $a$. 
To this end, we utilize a natural measure $\sigma_a$ on $X\times X$ with the property that $\sigma_a$-almost every pair $(x_1,x_2)$ is such 
that $(a,x_1,x_2)$ projects under the factor map 
$\pi$ to an Erd\H{o}s progression $(\pi(a),\pi(x_1),\pi(x_2))$ in the 
Kronecker factor $Z$. Although the definition of this 
measure is not necessary here, we include it for 
completeness. 

Let $z\mapsto \eta_z$ denote the disintegration of 
$\mu$ over the factor map $\pi$ (for details see
\cite[Section 
5.3]{Eiseidler&Ward:2011}) and for every $ (x_1,x_2) \in X \times X$, 
consider the measure 
\begin{equation} \label{lambda}
\lambda_{(x_1,x_2)}=\int_Z \eta_{z+\pi(x_1)} \times \eta_{z+\pi(x_2)}\ dm(z)
\end{equation} 
on $X \times X$. Then
$(x_1,x_2) \mapsto \lambda_{(x_1,x_2)}$ is a \emph{continuous ergodic 
decomposition} of $\mu \times \mu$, that is, a disintegration of $\mu \times 
\mu $ where the measures $\lambda_{(x_1,x_2)}$ are ergodic for $\left( \mu 
\times \mu \right)$-almost every $(x_1,x_2) \in X \times X$, and the map 
$(x_1,x_2) \mapsto \lambda_{(x_1,x_2)}$ is continuous in the weak* topology. 
We define the measure $\sigma_a$ on $X\times X$ via
\begin{equation}\label{sigma}
\sigma_a= \int_Z \eta_z \times \eta_{2z-\pi(a)}\ dm(z) = \int_Z \eta_{\pi(a)+z} \times \eta_{\pi(a)+2z}\ dm(z).
\end{equation}
The previous can be found in \cite{kmrr1} for the case of $\N$ and in 
\cite{charamaras_mountakis2024} for general amenable groups $G$.

Let us denote by $\pi_1, \pi_2 \colon  X \times X \to X$ the
projections $(x_1,x_2) \mapsto x_1$, $(x_1,x_2) \mapsto x_2$ respectively
and by $\pi_i \sigma_a$ the pushforward of $\sigma_a$ under $\pi_i$, 
$i=1,2$.

\begin{proposition} \label{results from KMRR}
  Let $(x_1,x_2)$ be a point in $X \times X$ and $\lambda_{(x_1,x_2)}$, $\sigma_a$ be the measures on $X \times X$ defined respectively in \eqref{lambda} and \eqref{sigma}. Then  
    \begin{enumerate}
        \item \label{mu as averages of shifted measures} $\pi_1\sigma_a=\mu$ and $\frac{1}{\ell}\sum_{i=1}^\ell T_{{g}_i}\pi_2\sigma_a = \mu$. 
        \item \label{support and lambda(a,x)}
For $\sigma_a$-almost every $(x_1,x_2)\in X \times X$ we have that $(x_1,x_2) \in \textbf{supp}(\lambda_{(a,x_1)})$.

\item \label{generic point for lambda(a,x)} 
There exists 
a F\o lner sequence $\Psi$, such that for $\mu$-almost every $x_1\in X$ 
the point $(a,x_1)$ belongs to $\textbf{gen}(\lambda_{(a,x_1)},\Psi)$. 
    \end{enumerate}
\end{proposition}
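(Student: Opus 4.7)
The plan is to treat each of the three claims in turn, using the disintegration $z\mapsto\eta_z$ of $\mu$ over the Kronecker factor map $\pi$ together with the equivariance relation $T_g\eta_z=\eta_{\theta(g)+z}$ (which follows from $\pi\circ T_g = R_g\circ\pi$ and essential uniqueness of the disintegration). For (1), the first identity $\pi_1\sigma_a=\mu$ is immediate from translation-invariance of the Haar measure $m$: $\pi_1\sigma_a = \int_Z \eta_{\pi(a)+z}\,dm(z) = \int_Z \eta_z\,dm(z) = \mu$. For the second, applying $T_{g_i}$ and using equivariance gives
$$T_{g_i}\pi_2\sigma_a = \int_Z \eta_{\theta(g_i)+\pi(a)+2z}\,dm(z).$$
The doubling map $D\colon Z\to 2Z$ is a continuous surjective homomorphism of compact abelian groups, so $D_*m$ is the Haar measure on the closed subgroup $2Z$. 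Since $\theta(G)$ is dense in $Z$ and $G=\bigsqcup_i(g_i+2G)$, the induced map $G/2G\to Z/2Z$ is surjective, so the cosets $\theta(g_i)+2Z$ cover $Z$ with each coset of $2Z$ appearing equally often; averaging the corresponding shifted Haar measures recovers $m$, yielding $\frac{1}{\ell}\sum_i T_{g_i}\pi_2\sigma_a=\mu$.

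For (2), the measure $\sigma_a$ is supported on the closed set $\{(x_1,x_2)\colon \pi(x_2)=2\pi(x_1)-\pi(a)\}$. For $\sigma_a$-almost every pair, setting $z=\pi(x_1)-\pi(a)$ makes the integrand in \eqref{lambda} defining $\lambda_{(a,x_1)}$ equal, at $w=z$, to $\eta_{\pi(a)+z}\times\eta_{\pi(a)+2z} = \eta_{\pi(x_1)}\times\eta_{\pi(x_2)}$. Combining this with the standard fact that $\eta_w$-almost every point lies in $\textbf{supp}(\eta_w)$ shows that $\sigma_a$-almost every $(x_1,x_2)$ lies in $\textbf{supp}(\eta_{\pi(x_1)})\times\textbf{supp}(\eta_{\pi(x_2)})\subset\textbf{supp}(\lambda_{(a,x_1)})$.

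For (3), since $a\in\gen(\mu,\Phi)$, the $\mu$-average in $x_1$ of the empirical measures $\frac{1}{|\Phi_N|}\sum_{g\in\Phi_N}\delta_{(T_ga,T_gx_1)}$ converges to $\int\lambda_{(a,x_1)}\,d\mu(x_1)=\mu\times\mu$. A diagonal extraction over a countable dense family $\{F_n\}\subset C(X\times X)$ then produces a \Folner{} subsequence $\Psi$ of $\Phi$ and a $\mu$-conull set $X_0\subset X$ such that, for every $x_1\in X_0$, the empirical measures along $\Psi$ weak$^*$-converge to some $T\times T$-invariant measure $\nu_{x_1}$ supported on the fiber $\{\pi(y_2)-\pi(y_1)=\pi(x_1)-\pi(a)\}$ with $\int\nu_{x_1}\,d\mu(x_1)=\mu\times\mu$. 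Ergodicity of $\lambda_{(a,x_1)}$ and the uniqueness of the ergodic component on this fiber then force $\nu_{x_1}=\lambda_{(a,x_1)}$ for $\mu$-almost every $x_1$, giving $(a,x_1)\in\gen(\lambda_{(a,x_1)},\Psi)$. The main obstacle is (3): one must arrange a single \Folner{} subsequence $\Psi$ that simultaneously produces almost-everywhere weak$^*$-convergence against the entire countable test family and identify the limit as $\lambda_{(a,x_1)}$ rather than some other invariant measure with the correct $\mu$-average.
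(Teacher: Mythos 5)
Your proof of item (1) follows the same strategy as the paper: $\pi_1\sigma_a=\mu$ by translation invariance of $m$, and for the second identity the key points are (i) $D_*m$ is the Haar measure on $2Z$, (ii) the cosets $\theta(g_i)+2Z$ exhaust $Z$ with each distinct coset of $2Z$ appearing exactly $\ell/k$ times, and (iii) averaging the resulting shifted Haar measures on $2Z$ reconstitutes $m$. Steps (ii) and (iii) are precisely the content of the paper's \cref{Z as a union of cosets} and the equations surrounding it; you have the right idea, but a full write-up would need to carry out the intermediate computation $\pi_2\sigma_a = \int_Z \eta_z\,\mathrm{d}m_{2,u}(z)$ and the two averaging identities explicitly.

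For items (2) and (3), note that the paper does \emph{not} reprove them: it cites \cite[Lemma 3.7, Prop.~3.11, Lemma 3.12]{kmrr2} and \cite[Theorems 4.9--4.10, Lemma 4.14]{charamaras_mountakis2024}. Your sketches of these items are extra work, and both have genuine gaps. In (2), after observing that $\sigma_a$ is concentrated on $\{\pi(x_2)=2\pi(x_1)-\pi(a)\}$ and that the integrand defining $\lambda_{(a,x_1)}$ at the single parameter value $w=\pi(x_1)-\pi(a)$ equals $\eta_{\pi(x_1)}\times\eta_{\pi(x_2)}$, you conclude $\textbf{supp}(\eta_{\pi(x_1)})\times\textbf{supp}(\eta_{\pi(x_2)})\subset\textbf{supp}(\lambda_{(a,x_1)})$. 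This inclusion does not follow from the formula alone: a single fiber has $m$-measure zero, so the support of the integral $\int_Z \eta_{w+\pi(a)}\times\eta_{w+\pi(x_1)}\,\mathrm{d}m(w)$ need not contain the support of the integrand at any fixed $w$. Making this rigorous requires invoking the weak* continuity of $z\mapsto\eta_z$ (available here because $\pi$ is assumed continuous, and asserted in the paper just before the proposition), together with a lower semicontinuity argument for supports; the cited lemmas in \cite{kmrr2} and \cite{charamaras_mountakis2024} do exactly this. In (3), the step ``ergodicity of $\lambda_{(a,x_1)}$ and uniqueness of the ergodic component on this fiber force $\nu_{x_1}=\lambda_{(a,x_1)}$'' is not justified: subsequential weak* limits $\nu_{x_1}$ of the empirical measures are $T\times T$-invariant but need not be ergodic, and there is no uniqueness of invariant (as opposed to ergodic) measures on a fiber. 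Pinning down $\nu_{x_1}=\lambda_{(a,x_1)}$ for almost every $x_1$ is the delicate part, and the cited proofs go through the Kronecker factor more carefully than your sketch allows; merely matching $\mu$-averages on both sides is not enough to identify the family of limit measures pointwise.
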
 

The fact that $\pi _1 \sigma_a =\mu$ is immediate from the definition.
The proofs of \eqref{support and lambda(a,x)} and \eqref{generic point for lambda(a,x)} can be found respectively in {\cite[Lemma 3.7 and Proposition 3.11]{kmrr2}} and \cite[Lemma 3.12]{kmrr2} for the case of $\N$, 
and in \cite[Theorem 4.9 and Lemma 4.14]{charamaras_mountakis2024} and 
\cite[Theorem 4.10 and Lemma 4.14]{charamaras_mountakis2024} for the case of more general groups $G$. Hence we only need to prove that 
$\frac{1}{\ell}\sum_{i=1}^\ell T_{{g}_i}\pi_2\sigma_a = \mu$.
To do this, we need the following lemma,
which asserts that we can find a subcollection $(g_{i_j})_j$ of 
$\{g_1, \ldots, g_{\ell}\}$ so that $Z$ can be written as a disjoint union 
of the cosets $\theta(g_{i_j}) + 2Z$, where $\theta \colon G \to Z$ is as in equation \eqref{eq kroneker}.

\begin{lemma}\label{Z as a union of cosets}
There exist an integer $k$ with $k\mid\ell$ and integers $i_1,\dots,i_k\in\{1,\dots,\ell\}$ such that 
\begin{equation}\label{Z coset representation}
    Z = \bigsqcup_{j=1}^k (\theta({g}_{i_j}) + 2Z).
\end{equation}
Moreover, for each $1\leq j\leq k$, we have $|\{1\leq i\leq\ell\colon 
\theta({g}_i)+2Z = \theta({g}_{i_j})+2Z\}| = \ell/k$.
\end{lemma}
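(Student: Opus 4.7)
The plan is to reduce everything to understanding the surjective group homomorphism $G/2G \to Z/2Z$ induced by $\theta$, and to apply the first isomorphism theorem to get the ``equal fiber'' conclusion.

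First I would verify that $2Z$ is a closed subgroup of $Z$. Since $Z$ is compact and the doubling map $D_Z \colon z \mapsto 2z$ is continuous on $Z$, the image $2Z = D_Z(Z)$ is compact, hence closed, so every coset $z + 2Z$ is closed in $Z$. Next I would show that $\{\theta(g_1),\dots,\theta(g_\ell)\}$ meets every coset of $2Z$. Fix $z \in Z$. By density of $\theta(G)$ in $Z$, choose a sequence $(h_n) \subset G$ with $\theta(h_n) \to z$. Writing $h_n = 2h_n' + g_{i_n}$ for some $i_n \in \{1,\dots,\ell\}$, we have $\theta(h_n) \in \theta(g_{i_n}) + 2Z$, because $\theta(2h_n') = 2\theta(h_n') \in 2Z$. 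Since $\{1,\dots,\ell\}$ is finite we may pass to a subsequence on which $i_n$ is constant, say equal to $i$; then $z$ is a limit of points in the closed set $\theta(g_i) + 2Z$, so $z \in \theta(g_i) + 2Z$. Therefore $Z = \bigcup_{i=1}^{\ell}(\theta(g_i)+2Z)$.

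In particular $[Z:2Z]$ is a finite integer $k \leq \ell$; choosing one representative per coset from $\{\theta(g_1),\dots,\theta(g_\ell)\}$ yields indices $i_1,\dots,i_k$ with the disjoint decomposition \eqref{Z coset representation}.

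For the divisibility $k \mid \ell$ and the uniform fiber size, I would define
\[
    \varphi \colon G/2G \to Z/2Z, \qquad g + 2G \mapsto \theta(g) + 2Z.
\]
This is well defined since $\theta(2G) = 2\theta(G) \subset 2Z$, and it is a group homomorphism. By the coset covering just proved, $\varphi$ is surjective. Since $\varphi$ is a homomorphism of finite abelian groups, all of its fibers are cosets of $\ker(\varphi)$ and hence have the same cardinality $|\ker(\varphi)| = |G/2G| / |Z/2Z| = \ell/k$. Applied to the fiber over $\theta(g_{i_j}) + 2Z$, this yields exactly the counting statement in the lemma, and in particular forces $k \mid \ell$.

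The main obstacle is essentially conceptual rather than technical: one must recognize that the compactness of the Kronecker factor $Z$ is what makes $2Z$ closed and hence $Z/2Z$ a well-defined (finite) quotient group, after which the result becomes a direct application of the first isomorphism theorem. No delicate estimate or combinatorial argument is required.
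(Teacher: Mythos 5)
Your proof is correct and follows essentially the same route as the paper: first cover $Z$ by the cosets $\theta(g_i)+2Z$ using density of $\theta(G)$, then apply the first isomorphism theorem to the induced surjection $G/2G \to Z/2Z$ to get $k\mid\ell$ and the equal-fiber count. The only difference is cosmetic — you verify the covering via a sequential compactness argument, while the paper compresses it into a one-line chain of set equalities — but the underlying mechanism (closedness of $2Z$ in the compact group $Z$) is identical.
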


\begin{proof}
From the assumptions on $G$ and $\theta$ we have that
$$Z = \overline{\theta(G)} = \bigcup_{i=1}^{\ell} \big(\theta({g}_i) + 2\overline{\theta(G)}\big)
= \bigcup_{i=1}^{\ell} (\theta({g}_i) + 2Z).$$
The sets $\theta({g}_i) + 2Z$ are cosets of $2Z$ in $Z$, hence any two such 
sets either coincide or they are disjoint. Consider a $i_1,\dots,i_k\in\{1,\dots,\ell \}$ for some $1\leq k\leq\ell$
such that the cosets $\theta({g}_{i_j}) + 2Z$ are pairwise distinct, and then \eqref{Z coset representation} follows.
It remains to show that $k \mid \ell$ and that for each $1\leq j\leq k$, we have $|\{1\leq i\leq\ell\colon \theta({g}_i)+2Z = 
\theta({g}_{i_j})+2Z\}| = \ell/k$.
The homomorphism $\theta:G\to Z$ induces a homomorphism $\widetilde{\theta}:G/2G \to Z/2Z$, mapping ${g}_i+2G$ to $\theta({g}_i)+2Z$ for each $1\leq i\leq\ell$. It follows by \eqref{Z coset representation} that $\widetilde{\theta}$ is a surjective homomorphism of finite groups. Then, by the first isomorphism theorem, we have that 
$$\faktor{Z}{2Z} \simeq \faktor{\Big(\faktor{G}{2G}\Big)}{\ker(\widetilde{\theta})},$$
hence $k = \ell/|\ker(\widetilde{\theta})|$. This implies that $k\mid\ell$. Moreover, for fixed $1\leq j\leq k$, the above isomorphism implies that there exist exactly $|\ker(\widetilde{\theta})|=\ell/k$ integers $1\leq i\leq\ell$ such that $\theta({g}_i)+2Z = \theta({g}_{i_j})+2Z$. This concludes the proof.
\end{proof}

We are now ready to prove \eqref{mu as averages of shifted measures} of \cref{results from KMRR}.

\begin{proof}[Proof of \eqref{mu as averages of shifted measures} of \cref{results from KMRR}]
Let $k$ and $i_1,i_2,\dots,i_k$ as in \cref{Z as a union of cosets}. 
It is immediate from \cref{Z as a union of cosets} that $m(2Z)=1/k$. Now for each $u\in\{1, \ldots, \ell\}$ we define 
$$m_{2,u} = k\cdot m|_{2Z+\theta({g}_u)}.$$

Let $m_2$ denote the unique probability Haar measure on $2Z$. The pushforward 
$Dm$ of the Haar measure $m$ under the doubling map is translation invariant in
$2Z$, and it is a probability measure on $2Z$, so $m_2 = Dm$. Also, it is not 
difficult to see that the measure probability measure 
$k\cdot m|_{2Z}$ is also translation invariant in $2Z$, so after all, 
$m_2 = Dm=k\cdot m|_{2Z}$. Now for each $u$, let 
$D_u$ be the map sending $z$ to $2z+\theta({g}_u)$. Using the previous we then have 
that $m_{2,u} = D_um$.

We first prove that for each $u\in\{1, \ldots,\ell\}$ we have
\begin{equation}\label{averaging the shifts of Haar}
    \frac{1}{k}\sum_{j=1}^k R_{{g}_{i_j}}m_{2,u} = m.
\end{equation}

Fix $u\in\{1,\ldots,\ell\}$. There exists a rearrangement $(i_j')_{1\leq j\leq k}$ of $(i_j)_{1\leq j\leq k}$ such that for each $j$ we have 
$2Z + \theta({g}_{i_j '}) = 2Z + \theta({g}_{i_j}) + \theta({g}_u)$.
Using \cref{Z as a union of cosets} and the definition of $m_{2,u}$, we have that for any measurable $C\subset Z$, 
\begin{align*}
    m(C) & = m\bigg(C\cap\bigg(\bigsqcup_{j=1}^k(\theta({g}_{i_j'}) + 2Z)\bigg)\bigg)
    = m\bigg(\bigsqcup_{j=1}^k(C\cap(\theta({g}_{i_j})+ \theta({g}_{u}) +2Z))\bigg) \\
    & 
    = \sum_{j=1}^k m(C\cap(\theta({g}_{i_j})+\theta({g}_u)+2Z))= 
    \sum_{j=1}^k m((C-\theta({g}_{i_j}))\cap (2Z+\theta({g}_u)))\\
    & = \frac{1}{k}\sum_{j=1}^k m_{2,u}(C-\theta({g}_{i_j}))
    = \frac{1}{k}\sum_{j=1}^k R_{{g}_{i_j}}m_{2,u}(C).
\end{align*}

Now we prove that for each $u\in\{1,\ldots,\ell\}$ we have
\begin{equation}\label{good averaging the shifts of Haar}
    \frac{1}{\ell}\sum_{i=1}^\ell R_{{g}_i} m_{2,u} = m.
\end{equation}

We fix $u\in\{1,\ldots,\ell\}$.
By the last assertion of \cref{Z as a union of cosets}, for each $1\leq j\leq k$, we can consider $1\leq i^{(j)}_1,\dots,i^{(j)}_{\ell/k}\leq \ell$ such that for each $1\leq v\leq\ell/k$, we have 
$\theta({g}_{i_j}) + 2Z = \theta({g}_{i^{(j)}_v}) + 2Z$, which implies that $R_{{g}_{i_j}}m_{2,u} = R_{{g}_{i^{(j)}_v}}m_{2,u}$. Moreover, \eqref{Z coset representation} implies that 
$\{\{i^{(j)}_v\colon 1\leq v\leq\ell/k\}\colon 1\leq j\leq k\}$ is a partition of $\{1,\dots\ell\}$. Combining all the above and using \eqref{averaging the shifts of Haar}, we have that 
$$m = \frac{1}{k}\sum_{j=1}^k {R_{{g}_{i_j}}}m_{2,u}
= \frac{1}{k}\sum_{j=1}^k \bigg(\frac{1}{\ell/k}\sum_{v=1}^{\ell/k}R_{{g}_{i^{(j)}_v}}m_2\bigg)
= \frac{1}{\ell}\sum_{i=1}^\ell R_{{g}_i}m_{2,u},$$
proving \eqref{good averaging the shifts of Haar}.

Now we can conclude the proof. 
In view of \eqref{Z coset representation}, there is $u\in\{1,\ldots,k\}$ and $z_0\in Z$ such that $\pi(a) = \theta(g_u)+2z_0$. First, we compute $\pi_2\sigma_a$ and using the translation invariance of $m$ we have that 
\begin{equation}\label{pi_2 sigma}
    \pi_2\sigma_a = \int_Z \eta_{2z+\pi(a)}\d m(z)
    = \int_Z \eta_{2(z+z_0)+\theta(g_u)}\d m(z)
    = \int_Z \eta_{2z+\theta(g_u)}\d m(z)
    = \int_Z \eta_z \d m_{2,u}(z),
\end{equation}
where the last equality follows from the fact that $m_{2,u} = D_um$.
Combining \eqref{good averaging the shifts of Haar} and \eqref{pi_2 sigma},
we obtain that
\begin{align*}
    \frac{1}{\ell}\sum_{i=1}^\ell T_{g_i}\pi_2\sigma_a
    & = \frac{1}{\ell}\sum_{i=1}^\ell \int_Z T_{g_i}\eta_z \d m_{2,u}(z)
    = \frac{1}{\ell}\sum_{i=1}^\ell \int_Z \eta_{R_{g_i}(z)} \d m_{2,u}(z)
    = \int_Z \eta_z \d \bigg(\frac{1}{\ell}\sum_{i=1}^\ell R_{g_i}m_{2,u}\bigg)(z) \\
    & = \int_Z \eta_z \d m(z) 
    = \mu. \qedhere
\end{align*}
\end{proof}

Using \cref{results from KMRR} we can guarantee the existence of many Erd\H{o}s 
progressions starting at the point $a$.

\begin{proposition}\label{EP full measure}
Let $(X,\mu,T)$ be an ergodic $G$-system and assume there is a 
continuous factor map $\pi\colon  X \to Z$ to its Kronecker factor. Let 
$a\in \textbf{gen}(\mu,\Phi)$, for some F\o lner sequence $\Phi$. 
Then for $\sigma_a$-almost every $(x_1,x_2) \in X \times X$, the point 
$(a,x_1,x_2)$ is an Erd\H{o}s progression. 
\end{proposition}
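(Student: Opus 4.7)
The plan is to combine parts \eqref{support and lambda(a,x)} and \eqref{generic point for lambda(a,x)} of \cref{results from KMRR} to produce, for $\sigma_a$-almost every $(x_1,x_2)$, an infinite sequence $(g_n)\subset G$ such that $(T_{g_n}a, T_{g_n}x_1)\to(x_1,x_2)$. First I would fix the F\o{}lner sequence $\Psi$ provided by \eqref{generic point for lambda(a,x)} and define
\[
\Omega = \{(x_1,x_2)\in X\times X : (x_1,x_2)\in\mathrm{supp}(\lambda_{(a,x_1)}) \text{ and } (a,x_1)\in\gen(\lambda_{(a,x_1)},\Psi)\}.
\]
Using \eqref{mu as averages of shifted measures} to conclude $\pi_1\sigma_a=\mu$, the condition defining membership in $\Omega$ involves only $x_1$ on the generic-point side (and the pair on the support side), so combining the full-measure conclusions of \eqref{support and lambda(a,x)} and \eqref{generic point for lambda(a,x)} yields $\sigma_a(\Omega)=1$.

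Next, I would fix a point $(x_1,x_2)\in\Omega$ and a countable decreasing neighborhood basis $(U_n)_{n\in\N}$ of $(x_1,x_2)$ in $X\times X$, with diameters tending to $0$. Since $(x_1,x_2)\in\mathrm{supp}(\lambda_{(a,x_1)})$, we have $\lambda_{(a,x_1)}(U_n)>0$ for every $n$. Because $(a,x_1)\in\gen(\lambda_{(a,x_1)},\Psi)$ and each $U_n$ is open, the Portmanteau theorem applied to the action $T\times T$ gives
\[
\liminf_{N\to\infty}\frac{|\{g\in\Psi_N : (T_g a, T_g x_1)\in U_n\}|}{|\Psi_N|} \geq \lambda_{(a,x_1)}(U_n) > 0.
\]
In particular, the set $S_n := \{g\in G : (T_g a, T_g x_1)\in U_n\}$ is infinite for each $n\in\N$.

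Finally, I would inductively select $g_n \in S_n$ so that the elements $g_1,g_2,\dots$ are pairwise distinct: this is possible because each $S_n$ is infinite, so at step $n$ we can always avoid the finitely many previously chosen values. Since $g_n\in S_n \subset U_n^{-1}$ in the sense that $(T_{g_n}a,T_{g_n}x_1)\in U_n$, and $\mathrm{diam}(U_n)\to 0$, we obtain $(T_{g_n}\times T_{g_n})(a,x_1)\to (x_1,x_2)$ with an infinite set $\{g_n : n\in\N\}$, so $(a,x_1,x_2)$ is an Erd\H{o}s progression as required.

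I expect the genuinely substantive content to already reside in \cref{results from KMRR} (particularly the existence of the F\o{}lner sequence $\Psi$ along which $(a,x_1)$ is generic for the fiber measure $\lambda_{(a,x_1)}$). Given those ingredients, the only conceptual step here is the passage from positive $\lambda_{(a,x_1)}$-measure of neighborhoods to the existence of an \emph{infinite} (as opposed to just nonempty) return set, which is handled cleanly by the F\o{}lner averaging above, and the diagonal extraction ensuring $\{g_n\}$ is actually infinite rather than eventually constant.
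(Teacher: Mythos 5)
Your proof is correct and follows the paper's overall structure: you establish $\sigma_a(\Omega)=1$ by combining $\pi_1\sigma_a=\mu$ (part (1) of \cref{results from KMRR}) with the full-measure conclusions of parts (2) and (3), exactly as the paper does. Where you diverge is in the final step: the paper invokes an external black box, namely \cite[Lemma 2.5]{charamaras_mountakis2024}, which states precisely that if $(a,x_1)\in\gen(\lambda_{(a,x_1)},\Psi)$ and $(x_1,x_2)\in\mathrm{supp}(\lambda_{(a,x_1)})$, then $(a,x_1,x_2)$ is an Erd\H{o}s progression. You instead prove this implication from scratch via the Portmanteau argument, the observation that a subset of positive lower $\Psi$-density must be infinite (since $|\Psi_N|\to\infty$), and the nested extraction along a shrinking neighborhood basis. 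This buys self-containment and makes visible the one real analytic point --- positivity of lower Banach density along a F\o{}lner sequence forces an infinite return set, not merely a nonempty one --- at the cost of an extra paragraph. The paper's citation route is shorter but opaque; your unpacking is the more instructive presentation and matches the content of the cited lemma. One minor implicit assumption worth flagging: your infinitude argument needs $|\Psi_N|\to\infty$, which holds because $G$ is infinite (forced by the ambient hypotheses of the paper, though not restated in this proposition).
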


\begin{proof}
Let $\Psi$ be the F\o lner sequence and $L\subset X$ be the full 
$\mu$-measure set of $x\in X$ such that $(a,x_1)$ belongs to 
$\textbf{gen}(\lambda_{(a,x_1)},\Psi)$, arising from
\eqref{generic point for lambda(a,x)} of \cref{results from KMRR}. 
By \eqref{mu as averages of shifted measures} of \cref{results from KMRR}, 
we have that 
$\sigma_a(L\times X)=\mu(L)=1$ and so, for $\sigma_a$-almost every 
$(x_1,x_2) \in X\times X$ we have that $(a,x_1) \in 
\textbf{gen}(\lambda_{(a,x_1)},\Psi)$. 
In view of \eqref{support and lambda(a,x)} of \cref{results from KMRR},  
it follows that for $\sigma_a$-almost every $(x_1,x_2) \in X\times X$, 
\begin{enumerate}
    \item $(a,x_1) \in \textbf{gen}(\lambda_{(a,x_1)},\Psi)$ and 
    \item $(x_1,x_2) \in \textbf{supp}(\lambda_{(a,x_1)})$.  
\end{enumerate}
Thus, applying \cite[Lemma 2.5]{charamaras_mountakis2024}, 
we have that for $\sigma_a$-almost every $(x_1,x_2)\in X\times X$ the point 
$(a,x_1,x_2)$ is indeed an Erd\H{o}s progression.  
\end{proof}

\subsection{The proof of \cref{analogue of 2.1 KR}}\label{dynamical results}
We are now ready to prove \cref{analogue of 2.1 KR}. We first prove the 
following special case, and then explain how the general case follows from that.

\begin{theorem} \label{analogue of 3.4 KR}
Let $(X,\mu,T)$ be an ergodic $G$-system admitting a continuous factor map to its Kronecker factor, $a\in \gen(\mu,\Phi)$ for some \Folner{} sequence $\Phi$ in $G$,  
and $E_1,E_2 \subset X$ be open sets satisfying
\begin{equation}\label{1}
\ell\mu(E_2)+\mu(E_1) > \ell.
\end{equation}
Then, there exist an \Erdos{} progression $(a,x_1,x_2)$ such that $(x_1,x_2) \in E_1 \times E_2$.
\end{theorem}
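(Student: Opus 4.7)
The plan is to reduce the theorem to the single inequality $\sigma_a(E_1\times E_2)>0$, where $\sigma_a$ is the measure on $X\times X$ defined in \eqref{sigma}. \cref{EP full measure} guarantees that the set of $(x_1,x_2)\in X\times X$ for which $(a,x_1,x_2)$ is an Erd\H{o}s progression has full $\sigma_a$-measure; intersecting this set with $E_1\times E_2$ then produces a progression of the required form, provided only that $\sigma_a$ assigns positive mass to $E_1\times E_2$.

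To establish this positivity I would invoke the two identities contained in part \eqref{mu as averages of shifted measures} of \cref{results from KMRR}. First, $\pi_1\sigma_a=\mu$, combined with inclusion--exclusion in the probability space $(X\times X,\sigma_a)$, yields
\[ \sigma_a(E_1\times E_2)\ \geq\ \sigma_a(E_1\times X)+\sigma_a(X\times E_2)-1\ =\ \mu(E_1)+\pi_2\sigma_a(E_2)-1. \]
Second, applying the identity $\tfrac{1}{\ell}\sum_{i=1}^\ell T_{g_i}\pi_2\sigma_a=\mu$ to the set $E_2$ gives $\ell\mu(E_2)=\sum_{i=1}^\ell \pi_2\sigma_a(T_{g_i}^{-1}E_2)$. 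Since $g_1=e_G$ by the convention fixed at the start of \cref{properties of sigma}, the $i=1$ summand is exactly $\pi_2\sigma_a(E_2)$, and bounding each of the remaining $\ell-1$ summands trivially by $1$ gives the one-sided lower bound $\pi_2\sigma_a(E_2)\geq \ell\mu(E_2)-(\ell-1)$. Combining,
\[ \sigma_a(E_1\times E_2)\ \geq\ \mu(E_1)+\ell\mu(E_2)-\ell\ >\ 0 \]
by the hypothesis \eqref{1}, closing the argument.

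The substantive ergodic-theoretic work has already been packaged into \cref{results from KMRR} and \cref{EP full measure}, so the proof of the theorem itself reduces to this short averaging calculation. The conceptual step doing the real work is the isolation of the identity coset $i=1$ in the averaged identity: this is what converts an averaged equality into a one-sided lower bound on $\pi_2\sigma_a(E_2)$, and it is precisely this asymmetry that explains the factor $\ell$ sitting in front of $\mu(E_2)$, rather than $\mu(E_1)$, in the hypothesis \eqref{1}.
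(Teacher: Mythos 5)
Your proof is correct and follows essentially the same route as the paper's: reduce to $\sigma_a(E_1\times E_2)>0$ via \cref{EP full measure}, use $\pi_1\sigma_a=\mu$ together with inclusion--exclusion, and then isolate the $g_1=e_G$ term in the averaged identity $\frac{1}{\ell}\sum_i T_{g_i}\pi_2\sigma_a=\mu$ to bound $\pi_2\sigma_a(E_2)\geq\ell\mu(E_2)-(\ell-1)$. Your closing remark about the asymmetry explaining why $\ell$ multiplies $\mu(E_2)$ rather than $\mu(E_1)$ is exactly the right way to read the inequality.
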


\begin{proof}
From \cref{EP full measure}, for $\sigma_a$-almost every $(x_1,x_2) \in X \times X$,
the point $(a,x_1,x_2)$ is an Erd\H{o}s progression. 
Hence it suffices to verify 
that $\sigma_a(E_1 \times E_2)>0$. As 
$$E_1 \times E_2 = \left( E_1 \times X \right) \cap \left(  X\times E_2\right),$$
this reduces to showing that 
$$\sigma_a( E_1 \times X ) + \sigma_a \left(  X\times E_2\right) >1 .$$
Now, 
$$ \sigma_a(E_1 \times X)+\sigma_a(X \times E_2)=\pi_1\sigma_a(E_1)+\pi_2\sigma_a(E_2) $$
and by \eqref{mu as averages of shifted measures} of \cref{results from KMRR},
the right hand side equals 
$$\mu(E_1)+\ell \mu(E_2)-\pi_2\sigma_a(T^{-1}_{g_2}E_2)- \dots -
\pi_2\sigma_a(T^{-1}_{g_{\ell}}E_2).$$
Therefore, 
$$\sigma_a(E_1 \times X)+\sigma_a(X \times E_2) \geq \mu(E_1)+\ell \mu(E_2)-(\ell-1)>1,$$
where the last inequality follows by \eqref{1}. 
\end{proof}

Now let us see how \cref{analogue of 2.1 KR} follows from 
\cref{analogue of 3.4 KR}.

\begin{proof}[Proof of \cref{analogue of 2.1 KR}]
Let $(X,\mu,T)$ be an ergodic $G$-system, $\Phi$ a \Folner{} sequence in $G$, 
$a\in X$ such that $a\in \gen(\mu, \Phi)$, and $E_1, E_2\subset X$ open sets 
with $\ell \mu(E_2) + \mu(E_1)>\ell$. From 
\cite[Proposition 3.7]{charamaras_mountakis2024} we know that there exists 
an ergodic extension $(\widetilde{X},\widetilde{\mu},\widetilde{T})$ of 
$\xmt$, a \Folner{} sequence $\wt{\Phi}$ in $G$ and a point 
$\widetilde{a}\in\gen(\widetilde{\mu},\widetilde{\Phi})$ such that 
there exists a continuous factor map $\widetilde{\pi}:\widetilde{X}\to
X$ with $\widetilde{\pi}(\widetilde{a})=a$, 
$(\widetilde{X},\widetilde{\mu},\widetilde{T})$ has continuous factor map
to its Kronecker factor.

Let $\wt{E}_1=\wt{\pi}^{-1}(E_1)$ and $\wt{E}_2=\wt{\pi}^{-1}(E_2)$. 
From the definition of a factor map we know that $\wt{\pi} \wt{\mu}= \mu$ and so it follows that 
\begin{equation*}
\ell \wt{\mu}(\wt{E}_2) + \wt{\mu}(\wt{E}_1) >\ell. 
\end{equation*}
Since $(\widetilde{X},\widetilde{\mu},\widetilde{T})$ admits a continuous 
factor map to its Kronecker factor, we can use 
\cref{analogue of 3.4 KR} to find an Erd\H{o}s 
progression  $(\wt{a},\wt{x}_1,\wt{x}_2) \in \wt{X}^3$, such that $(\wt{x}_1,\wt{x}_2) \in \wt{E}_1 \times \wt{E}_2$. The continuity of 
$\wt{\pi}$ allows us to conclude that the triple
$(a, x_1,x_2):=(\wt{\pi}(\wt{a}),\wt{\pi}
(\wt{x}_1),\wt{\pi}(\wt{x}_2)) \in X^3$ is also an Erd\H{o}s 
progression in $(X,T)$ and clearly, by definition, $(x_1,x_2) \in E_1 \times E_2$.
\end{proof}

\section{The proof of \cref{optimality}: optimality of the lower bounds}\label{section_examples}

 The goal of this section is to prove \cref{optimality}, so that the bounds that we get from \cref{main_theorem_1} can be tested to be optimal for any possible value of $\ell$ and $r$. 
 We begin by proving that $\ell$ and $r$ can only be powers of $2$. 

 \begin{lemma}\label{powers_of_2}
     If $G$ is an abelian group with $\ell = [G \colon 2G]$ and $r= |\ker (D)|$ then
     \begin{itemize}
         \item if $\ell <\infty$, there exists $d_1\in \N_0$ such that $\ell = 2^{d_1}$ and
         \item if $r<\infty$, there exists $d_2\in \N_0$ such that $r= 2^{d_2}$.
     \end{itemize}
 \end{lemma}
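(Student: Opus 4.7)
The plan is to observe that both $\ker(D)$ and $G/2G$ are abelian groups in which every element has order dividing $2$, so each becomes naturally a vector space over the field $\F_2$ with two elements. A finite $\F_2$-vector space has cardinality equal to a power of $2$, which gives both conclusions at once.

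More concretely, for $\ker(D)$ I would note that by definition every $g \in \ker(D)$ satisfies $2g = e_G$, so $\ker(D)$ is an abelian group of exponent $2$. Scalar multiplication by $\F_2 = \{0,1\}$ defined by $0 \cdot g = e_G$ and $1 \cdot g = g$ makes it an $\F_2$-vector space, and if it is finite it must have a finite basis, so $|\ker(D)| = 2^{d_2}$ for some $d_2 \in \N_0$.

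For the quotient, I would pass to $G/2G$ and note that for every $g + 2G \in G/2G$ we have $2(g + 2G) = 2g + 2G = 2G$, the identity of $G/2G$. Thus $G/2G$ is again an abelian group of exponent $2$, hence an $\F_2$-vector space by the same scalar multiplication as above, so if $\ell = [G:2G] = |G/2G|$ is finite then $\ell = 2^{d_1}$ for some $d_1 \in \N_0$.

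There is no real obstacle here; the only thing to double-check is that $\ker(D)$ and $G/2G$ are honestly defined as abelian groups (which is immediate since $G$ is abelian, so $D$ is a group homomorphism with kernel a subgroup and image $2G$ a subgroup). Everything else is a standard fact from elementary group theory on groups of exponent $2$.
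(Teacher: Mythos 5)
Your proof is correct and takes essentially the same approach as the paper: both observe that $\ker(D)$ and $G/2G$ have exponent $2$, hence are $\F_2$-vector spaces, and therefore have cardinality a power of $2$ whenever finite.
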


 \begin{proof}
     Since $G$ is an abelian group, $G/2G$ and $\ker(D)$ are also abelian groups and therefore they have a $\Z$-module structure. Furthermore, $G/2G$ and $\ker(D)$ have an $\F_2 = \Z /2\Z$-module structure. Indeed, if $g \in \ker(D)$ then $2g = g + g = 0$ by definition. Likewise, if $h \in G/2G$ then $h = \gamma + 2G$ for some $\gamma \in G$ and $2h = 2 \gamma + 2G = 0 +2G$ which is the identity element. Since $\F_2$ is a field, then $G/2G$ and $\ker(D)$ are vector spaces over $\F_2$, in particular they are isomorphic to $\displaystyle \bigoplus_{i \in I_1} \F_2$ and $\displaystyle \bigoplus_{i \in I_2} \F_2$ for some index set $I_1$ and $I_2$ respectively. 

     We conclude by observing that $\ell<\infty$ (respectively $r<\infty)$ if and only if $|I_1|<\infty$ (respectively $|I_2|<\infty$) and in that case $\ell=2^{|I_1|}$ (respectively $r=2^{|I_2|}$).
 \end{proof}

 In what follows, for each possible value of $\ell$ and $r$ we provide a group $G$, a F\o lner sequence $\Phi=(\Phi_N)_{N\in \N}$ and a subset $A$ of that group the density of which achieves the bound established in \cref{optimality}, while not containing an infinite subset of the form $B + B$. 
 The main idea is to reverse-engineer the proof of \cref{correspondence} and construct a subset $A \subset G$ whose density achieves the bounds derived from that result. Even though this was done for the group $\Z$ (see \cite[section 4]{kousek_radic2024}) and is directly generalized to $\Z^d$ in \cref{section in Zd}, the construction becomes less clear for other groups. 
 For example, in the case of $\Z^{d_1} \times (\Z(1/2)/\Z)^{d_2}$ we must consider a F\o lner sequence different from the product F\o lner sequence to properly account for the distinct behavior that the doubling map induces in this group, where, informally speaking, it expands along the $\Z^{d_1}$ coordinates and contracts along the $(\Z(1/2)/\Z)^{d_2}$ coordinates, see Sections \ref{sec_ex_ell>r} and \ref{sec_ex_ell<r} for more details. 

We highlight that as a consequence of \cref{prop equiv formulation}, if we show that 
one of the bounds in \cref{main_theorem_1} is sharp, then the other two are also 
sharp. Thus, for each value of $\ell$ and $r$ we shall provide an example that 
achieves optimal density for (\ref{main_theorem_1_1}) of \cref{main_theorem_1}.

\subsection{Case $\ell = 1$ and $r=1$} \label{sec ex F_p}
We recall that for a prime number $p$, $\F_p = \Z / p\Z$ and for any group $G$,  
$\displaystyle G^{\omega} $ is the group of sequences $(g_i)_{i\in \N}$ of elements in $G$ such that $g_i \neq e_G$ for finitely many $i \in \N$.  
We  construct this example in $\F_p^{\omega}$ for some odd prime $p$, such that there is a set $E_p \subset \F_p$ with $E_p \cap 2 E_p = \emptyset$ and $|E_p|=(p-1)/2$. This is the case for $p=3$ with $E_3 = \{1\}$,  and $p=11 $ with $E_{11} = \{ 1,3,4,5,9\}$. However, there is no such set $E_p$ when $p=7$.\footnote{If $\pi_p: \F_p^X \to \F_p^X$ is the permutation induced by multiplication by $2$ in the group of units of $\F_p$, then such a set $E_p$ exists if and only if the disjoint decomposition of $\pi_p$ consists exclusively of odd cycles. For instance, $\pi_3 = (1~2)$, $\pi_{11} = (1~2~4~8~5~10~9~7~3~6)$, but $\pi_{7} = (1~2~4)(3~6~5)$.} Let $p$ be an odd prime with this property. 

We denote $\Phi_N = \{ x \in \F_p^{\omega}: x_i = 0 \text{ for all } i > N \}$, $\Phi_0 = \{ 0 \}$, and by $e_N$ we denote the canonical vector, that is $(e_N)_i = 1$ if $i=N$ and  $(e_N)_i = 0$ otherwise. Notice that  $\Phi=(\Phi_N)_{N \in \N}$ is a F\o lner sequence in $\F_p ^{\omega}$ with $\alpha_\Phi = \alpha_{\mathbb{F}^{\omega}_p} = 1$.  
For each $N$, let $A_N= \bigsqcup_{i\in E_p}   \Phi_{N-1} + i \cdot e_{N}$, and take
\begin{equation} \label{eq def ex Fp}
    A = \bigsqcup_{N\geq 1}  A_N = \bigsqcup_{N\geq 1} \left( \bigsqcup_{i\in E_p}   \Phi_{N-1} + i \cdot e_{N} \right)
\end{equation}
Notice that 
$$\displaystyle \frac{\lvert A \cap \Phi_N\rvert}{\lvert \Phi_N\rvert} = \frac{p-1}{2} \cdot \frac{1 + p + \dots + p^{N-1} }{p^N} = \frac{p-1}{2} \left( \frac{1 - 1/p^N}{p-1} \right). $$ 
Therefore, taking limit as $N\to \infty$ we get $\diff_\Phi(A) = \frac{1}{2} = 1 - \frac{\alpha_{\F_p^{\omega}}}{\ell_{\F_p^{\omega}} + r_{\F_p^{\omega}}}$.

\begin{lemma} \label{lemma sharpness Fp}
    If $B+B \subset A$ for $A$ as in \eqref{eq def ex Fp} and $B \subset \F_p^{\omega}$, then $B$ is finite. 
\end{lemma}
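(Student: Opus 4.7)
The plan is to exploit the combinatorial property $E_p \cap 2E_p = \emptyset$ via a simple ``leading coordinate'' argument. For any nonzero $x \in \F_p^\omega$, define $\mathrm{top}(x) = \max\{i \geq 1 \colon x_i \neq 0\}$. Reading off the construction, an element $y \in \F_p^\omega$ lies in $A$ if and only if $y \neq 0$ and $y_{\mathrm{top}(y)} \in E_p$; indeed $A_N$ consists exactly of those vectors whose $N$-th coordinate is some $i \in E_p$ and whose coordinates above $N$ vanish, and the $A_N$ are pairwise disjoint. In particular $0 \notin A$, since $0 \notin E_p$ (otherwise $0 \in E_p \cap 2E_p$).

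Suppose toward contradiction that $B \subset \F_p^\omega$ is infinite with $B+B \subset A$. Since $0 \notin A$, we have $0 \notin B$. For any $b \in B$, the element $2b$ lies in $A$; because $p$ is odd, doubling is a bijection on $\F_p$, so $\mathrm{top}(2b) = \mathrm{top}(b) =: N(b)$ and $(2b)_{N(b)} = 2b_{N(b)}$. Hence the characterization of $A$ yields $2b_{N(b)} \in E_p$ for every $b \in B$.

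Now suppose there exist $b_1, b_2 \in B$ with $N(b_1) < N(b_2) =: N$. Then $(b_1+b_2)_N = (b_2)_N \neq 0$, while $(b_1+b_2)_i = 0$ for every $i > N$, so $\mathrm{top}(b_1+b_2) = N$. The condition $b_1+b_2 \in A$ therefore forces $(b_2)_N \in E_p$. Combined with $2(b_2)_N \in E_p$ from the previous step, this contradicts $E_p \cap 2E_p = \emptyset$. Consequently the map $b \mapsto N(b)$ must be constant on $B$, taking some value $N_0$, which forces $B \subset \Phi_{N_0}$; since $\Phi_{N_0}$ is finite, so is $B$, contradicting our assumption.

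The argument is clean and no step is a genuine obstacle; the only minor care needed is verifying that the leading coordinate of $b_1+b_2$ is that of $b_2$ when their tops differ, and that doubling preserves the top coordinate (which uses $p$ odd). The underlying conceptual point is that the construction of $A$ forces a rigid ``disjoint levels'' structure, and the disjointness $E_p \cap 2E_p = \emptyset$ prevents any infinite $B$ from spanning more than one level.
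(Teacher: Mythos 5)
Your proof is correct and takes essentially the same approach as the paper: both proofs observe that if $\iota(b') < \iota(b) = N$ then $b_N \in E_p$, while $2b \in A$ forces $2b_N \in E_p$, contradicting $E_p \cap 2E_p = \emptyset$. The only difference is organizational — you phrase the conclusion as ``the top coordinate must be constant on $B$, hence $B \subset \Phi_{N_0}$ is finite,'' whereas the paper first extracts an infinite subsequence with strictly increasing tops and then derives the same contradiction.
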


\begin{proof}
Suppose there is an infinite $B \subset \F_p^{\omega}$ such that $B + B \subset A$. For $x \in \F_p^{\omega}$, we denote by $\iota(x)$ the greatest index $i$ such that $x_i \neq 0$. We can take a subset $\{ b(j) \}_{j\in \N} = B' \subset B$ such that $\iota(b(j)) < \iota(b(j+1)) $ for all $j \in \N$.

Let $b,b' \in B'$ distinct, without loss of generality $\iota(b') < \iota(b) $. Since $b + b' \in A$, there exists $N \geq 1$ such that $b+b' \in A_N$. In particular, $\iota(b+b') = N$ and $ (b+b')_N \in E_p$. But since $\iota(b') < \iota(b) $, we have that $\iota(b) = N$, $b'_N = 0$ and $b_N \in E_p$. Finally, $\iota(2b) = N$, $2b_N \in 2 E_p$ which implies $\displaystyle 2b \in \bigcup_{i \in 2 E_p} (\Phi_{N-1} + i \cdot e_N) $ but this set is disjoint from $A$, contradicting the fact that $2b \in A$.
\end{proof}

\cref{lemma sharpness Fp} concludes the proof of the bound's sharpness in $\F_p^{\omega}$. 

\begin{remark*}
Another natural example of a group with $r=\ell=1$ is
the rational numbers $\Q$. We have constructed an example of 
a \Folner{} sequence $\Phi$ and a set $A$ in $\Q$ such that 
$\alpha_{\Phi}=1$, $\underline{\diff}_{\Phi}(A)\geq \frac{1}{2}$
and $A$ contains no $B+B$ for an infinite set $B\subset \Q$, 
therefore proving optimality of 
the lower bounds in \cref{main_theorem_1} for $(\Q,+)$. However,
since the construction is somewhat lengthy, we decided to not 
include it in the paper. 
\end{remark*}

\subsection{Case $r=1$ and $\ell = 2^d,$ $d\geq 1$} \label{section in Zd}

The following examples are defined in $\Z^d$ for $d \geq 1$ and they are a direct generalization of the one given in \cite[section 4]{kousek_radic2024} that was constructed in $\N$. For a vector $x=(x_1,\ldots,x_d) \in \Z^d$, we denote $\displaystyle \lVert x \rVert_{\infty} = \max_{i = 1, \ldots , d} |x_i|$.

\begin{lemma} \label{lemma ex from N to Zd}
    Let $A' \subset \N$ be such that if $B'+B' + t' \subset A' $ for some $B' \subset \N $ and $t' \in \N$, then $B'$ is finite. Let $A =  \{ x \in \Z^d : \lVert x \rVert_{\infty} = a, a \in A' \}$. Then, if there exist $B \subset \Z^d$ and $t \in \Z^d$ such that $B  + B +t \subset A$, it necessarily holds that $B$ is finite. 
\end{lemma}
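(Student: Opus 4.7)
The plan is to argue by contradiction: assuming there exist infinite $B\subset\Z^d$ and $t\in\Z^d$ with $B+B+t\subset A$, I will extract an infinite subset $B^{\ast}\subset B$ on which the map $b\mapsto\|b\|_\infty$ is injective and, crucially, such that for $b_1,b_2\in B^{\ast}$ one has an exact identity of the form
\begin{equation*}
\|b_1+b_2+t\|_\infty \;=\; \|b_1\|_\infty+\|b_2\|_\infty+t',\qquad t'\in\Z.
\end{equation*}
Since $A$ is defined by the condition $\|x\|_\infty\in A'$, this identity says that $B':=\{\|b\|_\infty : b\in B^{\ast}\}\subset\N$ satisfies $B'+B'+t'\subset A'$; after a trivial shift $B'\mapsto B'-s$ to make the constant term a positive integer, this will contradict the hypothesis on $A'$.

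\textbf{Extracting a nice subsequence.} Since $B$ is infinite in $\Z^d$, it is unbounded, so I may first pass to an infinite subset along which $\|b\|_\infty\to\infty$ strictly. Then I refine in two further diagonal passes, one for each coordinate $i\in\{1,\dots,d\}$:
(i) arrange that $\mathrm{sign}(b_i)$ takes a constant value $\epsilon_i\in\{-1,0,+1\}$ (with $\epsilon_i=0$ iff $b_i\equiv 0$);
(ii) arrange that the non-negative integer $\|b\|_\infty-|b_i|$ is either constant equal to some $c_i\in\N_0$ along $B^{\ast}$, or tends to $+\infty$ along $B^{\ast}$.
Let $M':=\{i:\|b\|_\infty-|b_i|\text{ stabilises to a finite }c_i\}$. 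Both refinements are achieved by iterated pigeonhole on each coordinate.

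\textbf{The key computation.} Fix $b_1,b_2\in B^{\ast}$ with $\|b_j\|_\infty$ sufficiently large. For any coordinate $i$ with $\epsilon_i\neq 0$, the constancy of sign gives
\begin{equation*}
|b_{1,i}+b_{2,i}+t_i| \;=\; |b_{1,i}|+|b_{2,i}|+\epsilon_i t_i,
\end{equation*}
where positivity of the RHS is guaranteed once $\|b_j\|_\infty > \|t\|_\infty$. If $i\in M'$ this quantity equals $\|b_1\|_\infty+\|b_2\|_\infty-2c_i+\epsilon_i t_i$; if $i\notin M'$ it is at most $\|b_1\|_\infty+\|b_2\|_\infty - g_i(b_1)-g_i(b_2)+|t_i|$ where $g_i(b)=\|b\|_\infty-|b_i|\to\infty$. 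Coordinates with $\epsilon_i=0$ contribute a bounded quantity $|t_i|$. Hence, for $b_1,b_2$ large enough in $B^{\ast}$, the maximum over $i$ is achieved on $M'$ and yields exactly
\begin{equation*}
\|b_1+b_2+t\|_\infty \;=\; \|b_1\|_\infty+\|b_2\|_\infty+t',\qquad t':=\max_{\substack{i\in M'\\ \epsilon_i\neq 0}}(-2c_i+\epsilon_i t_i)\in\Z.
\end{equation*}
(Note this also covers $b_1=b_2$, giving $\|2b+t\|_\infty=2\|b\|_\infty+t'$.)

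\textbf{Reduction to one dimension and conclusion.} Truncating $B^{\ast}$ to its large elements keeps it infinite, and defines $B':=\{\|b\|_\infty : b\in B^{\ast}\}\subset\N$, infinite because $b\mapsto\|b\|_\infty$ was arranged injective. By the identity above and the hypothesis $B+B+t\subset A$, every element of $B'+B'+t'$ lies in $A'$. To match the hypothesis on $A'$ (which requires $t'\in\N$), choose an integer $s$ with $t'+2s\geq 1$ and pass to the infinite translate $B'':=\{b'-s:b'\in B',\,b'>s\}\subset\N$, for which $B''+B''+(t'+2s)\subset A'$. The assumption on $A'$ forces $B''$ to be finite, contradicting its infinitude.

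\textbf{Main obstacle.} The delicate point is the diagonal refinement in step (ii): without forcing $\|b\|_\infty-|b_i|$ to either stabilise or diverge, the coordinates outside the argmax of $\|\cdot\|_\infty$ could a priori contribute to the $\ell_\infty$ norm of $b_1+b_2+t$ in a way that is neither additive nor asymptotically negligible, which would destroy the clean identity $\|b_1+b_2+t\|_\infty=\|b_1\|_\infty+\|b_2\|_\infty+t'$ that drives the reduction.
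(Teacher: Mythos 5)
Your proof is correct, and its overall strategy is the same as the paper's: argue by contradiction, extract a structured infinite subset of $B$ with constant sign pattern and a distinguished coordinate attaining the $\ell_\infty$-maximum, reduce to a one-dimensional sumset in $\N$ via that coordinate, and invoke the hypothesis on $A'$ after a harmless translation. The genuine difference is your step (ii), which forces each gap $\|b\|_\infty - |b_i|$ to either stabilise to a constant $c_i$ or diverge along $B^{\ast}$. The paper does not perform this refinement: it pigeonholes to an index $i_0$ with $\|x\|_\infty = x_{i_0}$ for all $x \in B$ and then asserts that $\|x+y+t\|_\infty = |x_{i_0}+y_{i_0}+t_{i_0}|$, but this does not follow from the pigeonhole alone. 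If another coordinate $j$ keeps pace with $i_0$ (for instance $x_j = x_{i_0}$ for all $x \in B$) while $t_j > t_{i_0}$, the $\ell_\infty$-norm of $x+y+t$ is attained at $j$, not $i_0$, and the paper's reduction to $B_{i_0}+B_{i_0}+t_{i_0}\subset A'$ would fail. Your step (ii) is exactly what closes this; it guarantees that the argmax is eventually confined to $M'$ and produces the exact identity $\|b_1+b_2+t\|_\infty = \|b_1\|_\infty+\|b_2\|_\infty+t'$, and your ``Main obstacle'' paragraph correctly pinpoints this as the crux. One small inaccuracy worth fixing in your writeup: the claim that $|b_{1,i}+b_{2,i}+t_i| = |b_{1,i}|+|b_{2,i}|+\epsilon_i t_i$ holds for every $i$ with $\epsilon_i\neq 0$ as soon as $\|b_j\|_\infty > \|t\|_\infty$ is not right for $i\notin M'$, since there $|b_{j,i}|$ can remain small compared to $|t_i|$; this does not damage the argument, because for $i\notin M'$ you only use the one-sided triangle-inequality bound rather than the exact equality, but the sentence should be restricted accordingly (say, to $i$ for which $|b_{1,i}|+|b_{2,i}|\geq |t_i|$).
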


\begin{proof}
    First, reasoning by contradiction, suppose that there exists an element $t \in \Z^d$ and an infinite set $B \subset Z^d$ such that $B + B + t \subset A$. By pigeonhole principle, we know that there exists an infinite subset of $B$ (that we keep calling $B$) all the elements of which have the same sign in each coordinate, that is, for every $x,y\in B$, $x_i \cdot y_i \geq 0$ for all $i \in \{1, \ldots, d\}$. 
    Without loss of generality we can assume that the coordinates of $t$ have the same signs as the respective coordinates of the elements in $B$. 
    
    Using again the pigeonhole principle, we can suppose that there exists a fixed index $i_0 \in \{1, \ldots , d\}$ such that $\lVert x \rVert_{\infty} = x_{i_0}$ for all $x \in B$. Now, choose two arbitrary elements $x,y \in B$. Since $x+y+t \in A$, there exists $a'\in A' $ such that
    $a' =|x_{i_0} + y_{i_0} + t_{i_0} | $, in particular, by symmetry of $A$ we can suppose that the $i_0$-th coordinate of every element in $B$ is nonnegative.  
    
    After all the previous reductions, define $B_{i_0}=\{  x_{i_0} : x \in B\} \subset \N $ and therefore, by construction, $B_{i_0}  + B_{i_0} + t_{i_0} \subset A'$. Thus, $B_{i_0}$ is finite. Finally, if $m = \max \{ n \in  B_{i_0} \}$ we notice that $B \subset \{ x : \lVert x \rVert_{\infty} \leq m \} $ which implies that $B$ is finite (contradiction).
\end{proof}

In this and the following sections we use the interval notation for discrete intervals, for example if $a,b \in \R$, we write $[a,b]$ for  $[a,b] \cap \Z$. Consider the set 
\begin{equation*}
    A' = \bigcup_{n \in \N} [4^n, (2-1/n) \cdot 4^n),
\end{equation*}
which, as was proven in \cite{kousek_radic2024}, does not contain 
an infinite sumset of the form $B'+B'+t'$ with $B' \subset \N$ and
$t'\in \N$. Let $A_d = \{ x \in \Z^d \colon \lVert x \rVert_{\infty} = a , a \in A'\}$. Another way of writing the set $A_d$ (more similar to the one used in the next sections) is
\begin{equation*}
    A_d = \bigsqcup_{n \in \N} (-(2-1/n) \cdot 4^n, (2-1/n) \cdot 4^n)^d \backslash (-4^n,4^n)^d .
\end{equation*}
Notice that $\Z^d$ has a natural F\o lner sequence given by $\Phi^d_N = [-N,N]^d$ for $N \in \N$ and that $\alpha_{\Phi^d} = \alpha_{\Z^d} = \frac{1}{2^d}$. Computing the density of $A_d$ with respect to that F\o lner sequence we find
\begin{align*}
    \overline{\diff}(A_d) &=  \lim_{N \to \infty} \frac{|[-(2-1/N)4^N, (2-1/N)4^N]^d \cap A_d|}{|[-(2-1/N)4^N, (2-1/N)4^N]^d|}
     =\lim_{N \to \infty} \frac{2}{(2(2-1/N)4^{N} +1)^d} \sum_{n=1}^N ((2 - \frac{1}{i})4^n)^d - (4^n)^d) 
    \\&= \lim_{N \to \infty}\frac{1}{(2-1/N)^d 4^{Nd}} \left( (2^d -1 ) \frac{4^{d(N+1)}-1}{4^d -1} \right) 
    = 1 - \frac{1}{2^d +1} = 1 - \frac{\ell_{\Z^d} \alpha_{\Z^d}}{r_{\Z^d} + \ell_{\Z^d}}.
\end{align*}
Using \cref{lemma ex from N to Zd}, the set $A_d \subset \Z^d$ gives us the sharpness of the corresponding bound. 

\subsection{Case $\ell =1$ and $r=2^d,$ $d\geq 1$} \label{ex in Z(1/2) / Z}

In this subsection and the following ones we use the group of dyadic points in $\R/\Z$, that is,  $G=\Z(1/2) / \Z = \{ k /2^N \mod 1 : k, N \in \N \}$ and the disjoint family of subsets $(C_n)_{n \geq 0}$ given by 
\begin{equation} \label{eq family C_N}
    C_0 = \{0\}, C_1 = \{1/2\} \text{ and in general } C_{n} = \left\{ \frac{k}{2^n} \colon 0\leq k < 2^n, k \text{ odd} \right\} \text{ for } n \geq 0.
\end{equation} 
Notice that $\displaystyle G = \bigcup_{n \geq 0} C_n = \{ (2k+1) /2^n \in \Q/\Z : k, n \geq 0 \}$. Also notice that
\begin{equation} \label{eq C_N/2}
    C_0/2 = \{0,1/2\}= C_0 \cup C_1, \text{ and  } C_{n}/2 = C_{n+1} \text{ for } n \geq 1.
\end{equation} 
The equation \eqref{eq C_N/2} is a key feature for this and the following examples. With this family of sets, one can also describe a natural F\o lner sequence $F = (F_N)_{N \geq
0}$ in $G$ given by $F_N = \{ K/2^N \colon 0 \leq k < 2^N \} = \bigsqcup_{n = 0}^N 
C_n $. Using \eqref{eq C_N/2} we deduce that $F_N /2 = F_{N+1} = F_N \cup C_{N+1}$ and 
therefore $\alpha_{F} = 1$. 

\hfill

Consider the group $G^d$ and notice that $\ell_{G^d} = 1$, $r_{G^d} = 2^d$.
We construct something similar to the example given in the previous subsection. Let $\Phi=(\Phi_N)_{N\in \N}$ be the \Folner{} sequence in 
$G^d$ defined by $\displaystyle \Phi_N = \underbrace{F_N 
\times \cdots \times F_N}_{d\text{ times}}$. As before, for each 
$N$, $\Phi_N /2 \supset \Phi_N$, which implies that 
$\alpha_{\Phi}=1$.

Before enouncing the example we define some functions that are also useful in the following examples. Consider $\theta \colon G \to \N_0$ be the function such that 
\begin{equation} \label{eq degree function}
 \theta \left(\frac{2k+1}{2^n}\right)=n ~ \text{ for all } g= \frac{2k+1}{2^n} \in G.
\end{equation}
That is, for every $g \in G$, $\theta(g) = n$ if and only if $g \in C_n$ (see \eqref{eq family C_N}).
Notice that, for $g, g' \in G$, if $\theta(g)\neq \theta(g')$,
then $\theta(g+g')=\max(\theta(g), \theta(g'))$, while if $\theta(g)=\theta(g')$, then 
$\theta(g+g')<\theta(g)$. Using the function $\theta \colon G \to \N_0$ we define
two functions $w\colon G^d \to N_0 $ and $\eta\colon G^d \to \{1, \ldots, d\}$ as 
follows: for $y = (y_1, \ldots, y_d) \in G^d$
\begin{align}
    w(y) =\max \{ \theta(y_j) \colon j =1, \ldots, d\} \label{eq norm index} \\
    \eta(y)=\min \{{i\in \{1, \dots, d\}} \colon \theta (y_i)= w(y)\} \label{eq index eta}
\end{align}
where $\eta(y)$ should be interpreted as the index $i \in \{1, \ldots,d\}$ where 
the maximum defined in $ w(y)$ is achieved, but since that index is not necessarily 
unique we pick the minimum for convenience. For instance in $G^3$, 
$\eta\left( 0, \frac{1}{8}, \frac{3}{8} \right)= 2$.

To understand the following example, it might be useful to think that we try to replicate the example in $\Z^d$, where now the function $\theta \colon G \to \N_0$ plays an analogous role to the one of the absolute value and $w \colon G^d \to \N_0$ to the uniform norm.

\begin{lemma} \label{lemma in z(1/2) about A}
Let $A \subset (\Z(\frac{1}{2})/\Z)^d$ be the set given by
    \begin{equation}
     A = \bigcup_{n \geq 0}  (F_{2n+1}  \times \cdots \times F_{2n+1}) \backslash (F_{2n}  \times \cdots \times F_{2n} ) = \bigcup_{n \geq 0} ( \Phi_{2n+1} \backslash \Phi_{2n}).
\end{equation}
If $B+B \subset A$ for some $B \subset G^d$ then $B$ is finite.
\end{lemma}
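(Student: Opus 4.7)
The plan is to first obtain a clean description of $A$ in terms of the function $w$. Since $F_N = \{g \in G \colon \theta(g) \leq N\}$, we have $\Phi_N = \{y \in G^d \colon w(y) \leq N\}$, so
$$A = \bigcup_{n \geq 0} (\Phi_{2n+1} \setminus \Phi_{2n}) = \{y \in G^d \colon w(y) \text{ is odd}\}.$$

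Next I would record how $w$ interacts with addition and doubling. By \eqref{eq C_N/2}, doubling in $G$ sends $C_n$ into $C_{n-1}$ for every $n \geq 1$ and fixes $0$, so $\theta(2g) = \max(\theta(g)-1,0)$; consequently, for any $y \in G^d$ with $w(y) \geq 1$, one has $w(2y) = w(y) - 1$. Also, for $g, g' \in G$ with $\theta(g) \neq \theta(g')$ an elementary computation shows $\theta(g+g') = \max(\theta(g),\theta(g'))$: writing the unique reduced fractions one checks that the numerator of $g+g'$ remains odd. Suppose now that $B + B \subset A$. Applying this to $b + b = 2b \in A$ for each $b \in B$, we get that $w(2b)$ is odd, which forces $w(b)$ to be an even integer with $w(b) \geq 2$.

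Assume, for contradiction, that $B$ is infinite. Since $\eta$ takes only finitely many values, pigeonholing gives an infinite $B' \subset B$ and an index $i_0 \in \{1,\dots,d\}$ with $\eta(b) = i_0$ for every $b \in B'$. The set $\Phi_M$ is finite for every $M$, so $w$ must be unbounded on $B'$; we therefore extract a sequence $b_1, b_2, \dots$ in $B'$ with $w(b_1) < w(b_2) < \cdots$. Fix $k < l$. At coordinate $i_0$ we have
$$\theta\bigl((b_k)_{i_0}\bigr) = w(b_k) < w(b_l) = \theta\bigl((b_l)_{i_0}\bigr),$$
so by the addition rule above, $\theta\bigl((b_k + b_l)_{i_0}\bigr) = w(b_l)$. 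At any other coordinate $j \neq i_0$, since $\theta$ of a sum is bounded by the maximum of the $\theta$'s of the summands, $\theta\bigl((b_k + b_l)_j\bigr) \leq w(b_l)$. Hence $w(b_k + b_l) = w(b_l)$, which is even. On the other hand, $b_k + b_l \in A$ forces $w(b_k + b_l)$ to be odd, a contradiction.

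The main issue to be careful about is the behaviour of $\theta$ under summation when the $\theta$ values of the two summands coincide, since then $\theta$ can drop arbitrarily. I avoid this entirely by first forcing $\eta$ to be constant on an infinite subset and then thinning to strictly increasing $w$-values, so that at the distinguished coordinate $i_0$ the $\theta$ values are always distinct and the simple additivity rule applies. Everything else is bookkeeping with the definitions of $w$ and $\eta$.
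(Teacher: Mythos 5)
Your proof is correct and follows essentially the same line of argument as the paper: rewrite $A = \{y : w(y) \text{ odd}\}$, use $2b \in A$ to conclude $w(b)$ is even for all $b \in B$, pigeonhole to fix $\eta$ and make $w$-values distinct, and then derive a contradiction from $w(b + b')$ being even. The only cosmetic differences are that you extract a full strictly increasing sequence where the paper only needs two elements with distinct $w$-values, and you justify the finiteness step via $\Phi_M$ rather than via each $C_n$; both are equivalent.
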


\begin{proof}
By contradiction, suppose that there exists an infinite set $B \subset G^d$ such that $B+B  \subset A$. Notice that, using \eqref{eq norm index} and \eqref{eq index eta}, we can rewrite $A$ as 
$$A =\{ y \in G^d \colon w(y) \text{ is odd} \}.$$ 

Since $B$ is infinite, without loss of generality, one can suppose that there exists $i \in \{1, \ldots, d\}$ such that for all $b \in B$, $\eta(b) = i$. Moreover, since $B$ is infinite and each $C_n$ is finite, one can suppose that for every distinct $b,b' \in B$, $w(b) \neq w(b')$. 

Take $b^{(1)}, b^{(2)} \in B$, and assume that 
$w(b^{(1)})<w(b^{(2)})$. Then $2b^{(1)}, 2b^{(2)} \in A$,
so for $w(2b^{(1)})= w(b^{(1)})-1$ is odd, so 
$w(b^{(1)})$ is even. Using the same reasoning,
$w(b^{(2)})$ is also even. 

But then, since $w(b^{(1)})<w(b^{(2)})$ and $\eta(b^{(1)}) = \eta (b^{(2)})$, we 
have that $w(b^{(1)}+b^{(2)}) = \theta(b^{(1)}_i+b^{(2)}_i) = \max\{\theta(b^{(1)}_i),\theta(b^{(2)}_i)\}=
\theta(b^{(2)}_i) = w(b^{(2)})$, which is even, and therefore 
$b^{(1)}+b^{(2)}$ is not in $A$, which is a contradiction and we conclude the lemma. 
\end{proof}

We conclude the sharpness of the bound given by \cref{main_theorem_1} by computing the density of $A$,
\begin{align*}
    \overline{\diff}_{\Phi}(A) &= \lim_{N \to \infty} \frac{\sum_{n=0}^N |(F_{2n+1}  \times \cdots \times F_{2n+1}) \backslash (F_{2n}  \times \cdots \times F_{2n} )|  }{|\Phi_{2N+1}|} \\
   &= \lim_{N \to \infty} \frac{\sum_{n=0}^N (2^{2n+1})^d - (2^{2n})^d  }{(2^{2N+1})^d } = \frac{2^d}{2^d +1}=
   1- \frac{\alpha_{G^d}}{\ell_{G^d} + r_{G^d}}.
\end{align*}

\subsection{Case $\ell = 2^{d_1}$ and $r = 2^{d_2}$ with $d_1 \geq d_2\geq 1$}\label{sec_ex_ell>r}

In this subsection, we construct the desired examples for general $\ell,r>1$ with $\ell \geq r$.
From \cref{powers_of_2} we know that for any countable abelian group
$G$, the quantities $r=|\ker(D)|, \ell=[G:2G]$ are integer powers of 
$2$. Let $d_1, d_2 \in \N$ with $d_1 \geq d_2$, and consider 
the group $G=\Z ^{d_1}\times (\Z(1/2) / \Z)^{d_2}$. Then 
$r=2^{d_2}$ and $\ell=2^{d_1}$ and $\ell \geq r$.
For an element $z\in G$, we write $z=(z^{(1)}, z^{(2)})$, 
where $z^{(1)}\in \Z^{d_1}$ and $z^{(2)}\in (\Z(1/2)/\Z)^{d_2}$.

Let $c(N),v(N)$ be two strictly increasing sequences of natural numbers
so that $c(N)$ is always even and $v(N+1)>v(N)+c(N)+1$ for all $N\in \N$ (for example, take $v(N)=3^N, c(N)=2N$). For $k \in \N$, we recall that $C_k \subset \Z(\frac{1}{2})/\Z$ is the set defined in \eqref{eq family C_N} and we also write $I_k = [-2^k, 2^k]\setminus \{0\} $. For each $N$, let 
\begin{equation} \label{eq Folner d_1 > d_2}
    \Phi_N= I_{c(N)}^{d_1}
\times C_{v(N)+1}^{d_2}\sqcup 
I_{C(N)-1}^{d_1}
\times C_{v(N)+2}^{d_2}\sqcup
\cdots \sqcup 
I_1^{d_1} \times C_{v(N)+c(N)}^{d_2}
= \bigsqcup_{m=0}^{c(N)-1} 
I_{c(N)-m}^{d_1}
\times C_{v(N)+m+1}^{d_2}.
\end{equation}

We highlight that, in the definition of $I_k$ we remove $0$ from the discrete interval $[-2^{k}, 2^{k}]$ only to simplify the computations it what follows, but the same result is true if we do not remove it.

As $v(N)+c(N)<v(N+1)$, the $\Phi_N$'s are pairwise disjoint. We 
prove that $(\Phi_N)_{N\in \N}$ is a \Folner{} sequence in $G$, and 
that $\alpha_{\Phi}=\frac{r}{\ell}=\min\{1,\frac{r}{\ell}\}$. 

For each $N$, 
\begin{equation}\label{elts_of_phi_1}
  |\Phi_N|=
  \sum_{m=0}^{c(N)-1} \big(2\cdot 2^{c(N)-m}\big)^{d_1}
  \big(2^{v(N)+m}\big)^{d_2}=
  2^{d_1 c(N) + d_1 + d_2 v(N)} \sum_{m=0} ^{c(N)-1} 2^{(d_2-d_1)m}.
\end{equation}
Calculating in \eqref{elts_of_phi_1}, one sees that 
\begin{equation}\label{useful_calc_1}
    |\Phi_N| = \begin{cases}
        c(N) 2^{d_1 c(N) + d_1 v(N) + d_1}, \text{ if } d_1=d_2 \\
        2^{d_1 c(N) + d_1 + d_2 v(N)}\cdot \frac{1-( 2^{d_2 -d_1} )
        ^{c(N)}}{1-2^{d_2-d_1}}, \text{ if } d_1\neq d_2 .
    \end{cases}
\end{equation}

Let $(x,y)\in G$, where $x=(x_1, \ldots, x_{d_1})$ and 
$y=(y_1, \ldots, y_{d_2})$. Recall the definition of $w(y)$ given in \eqref{eq norm index}.
Then for all integers $k_1, \ldots, k_{d_2} > w(y)$, $y+ C_{k_1} \times 
\cdots \times C_{k_{d_2}} = C_{k_1} \times \cdots \times C_{k_{d_2}}$. In particular if $v(N) > w(y)$, then $C_{v(N) + m+1}^{d_1} + y =C_{v(N) + m+1}^{d_1}$ for every $m \in \N$.

For each $N$, by construction $v(N)\geq N$, so for $N>w(y)$,
$v(N)>w(y)$, and therefore 
$$\big((x,y)+\Phi_N\big) \triangle \Phi_N=\bigsqcup_{m=0}^{c(N)-1}
\Big( \big( x+ I_{c(N)-m} ^{d_1} \big)
\triangle
I_{c(N)-m} ^{d_1} \Big) 
\times C_{v(N)+m+1}^{d_2}. $$
Computing the cardinality we get
\begin{equation*}
\big|\big( x+ I_{c(N)-m} ^{d_1} \big)
\triangle I_{c(N)-m} ^{d_1} \big| 
\leq 2^{d_1} \big| (x+[1, 2^{c(N)-m}] ^{d_1}) \triangle 
[1, 2^{c(N)-m}] ^{d_1} \big|\leq 2^{d_1} \big(2^{c(N)-m}\big)^{d_1 -1}
\sum_{i=1}^{d_1} 2|x_i|,
\end{equation*}
and therefore
\begin{align*}
    \big| \big((x,y)+\Phi_N\big) \triangle \Phi_N \big| &\leq 
    \sum_{m=0}^{c(N)-1} 
    2^{d_1} \big(2^{c(N)-m}\big)^{d_1 -1}
    \Big(\sum_{i=1}^{d_1} 2|x_i| \Big) \big(2^{v(N)+m}\big)^{d_2} \\
    &=\Big(\sum_{i=1}^{d_1} 2|x_i| \Big) 2^{d_1+d_1 c(N) +d_2 v(N) -c(N)} \sum_{m=0}^{c(N)-1} 
     \big(2^{d_2 +1 -d_1}\big)^{m}.
\end{align*}
Finally,
\begin{equation}\label{foln_calc_1}
    \big| \big((x,y)+\Phi_N\big) \triangle \Phi_N \big| \leq \left\{
\begin{array}{cc}
     c(N) \big(\sum_{i=1}^{d_1} 2|x_i| \big) 2^{d_1+d_1 c(N) +d_2 v(N) -c(N)},&   \text{ if } d_1=d_2 +1 \\
      \big(\sum_{i=1}^{d_1} 2|x_i| \big) 2^{d_1+d_1 c(N) +d_2 v(N) -c(N)}\cdot \frac{1-(2^{d_2+1 -d_1})^{c(N)}}{1-2^{d_2+1 -d_1}}, 
      &\text{ if } d_1\neq d_2 +1.
\end{array}         \right.
\end{equation}
Using \eqref{useful_calc_1} and \eqref{foln_calc_1}, we can prove the following
lemma:
\begin{lemma} \label{lemma Phi is a Folner in d_1 d_2}
    For all $d_1\geq d_2$, $(\Phi_N)_{N \in \N}$ defined in \eqref{eq Folner d_1 > d_2} is a F\o lner sequence in $\Z^{d_1} \times (\Z(1/2)/\Z)^{d_2}$.
\end{lemma}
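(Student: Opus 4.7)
The plan is to leverage the explicit expressions \eqref{useful_calc_1} and \eqref{foln_calc_1} already computed in the excerpt and verify that their ratio vanishes for every $(x,y) \in G$. First I would confirm that the union defining $\Phi_N$ is indeed disjoint, which legitimizes \eqref{useful_calc_1}: within a fixed $N$, the sets $C_{v(N)+m+1}^{d_2}$ are pairwise disjoint since distinct $C_k$'s are disjoint in $\Z(1/2)/\Z$, while across different values of $N$ the spacing condition $v(N+1) > v(N) + c(N) + 1$ prevents overlap, as already noted by the authors.

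Next I would fix $(x,y) \in G$, set $K(x) = \sum_{i=1}^{d_1} 2|x_i|$, and take $N$ large enough that $v(N) > w(y)$. From that point on, translation by $y$ preserves each set $C_{v(N)+m+1}^{d_2}$, so the symmetric difference localizes to the $\Z^{d_1}$ component in each slice and \eqref{foln_calc_1} applies.

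The remaining step is to split into cases according to the relation between $d_1$ and $d_2$ and divide the two bounds. If $d_1 = d_2$, the second sub-case of \eqref{foln_calc_1} applies with common ratio $2^{d_2+1-d_1}=2$, so the geometric sum collapses to $2^{c(N)}-1$ and the numerator is at most $K(x) \cdot 2^{d_1 + d_1 c(N) + d_1 v(N)}$, while the first sub-case of \eqref{useful_calc_1} gives denominator $c(N) \cdot 2^{d_1 + d_1 c(N) + d_1 v(N)}$; the ratio is $O(K(x)/c(N))$. If $d_1 = d_2 + 1$, the first sub-case of \eqref{foln_calc_1} gives numerator $c(N)K(x) \cdot 2^{d_1+d_1c(N)+d_2v(N)-c(N)}$, while the second sub-case of \eqref{useful_calc_1} gives $|\Phi_N| \sim 2 \cdot 2^{d_1 + d_1 c(N) + d_2 v(N)}$, producing a ratio of order $c(N) K(x) 2^{-c(N)}$. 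If $d_1 > d_2 + 1$, then $2^{d_2-d_1} \le \tfrac{1}{4}$ and $2^{d_2+1-d_1} \le \tfrac{1}{2}$, so the geometric sums appearing in both \eqref{foln_calc_1} and \eqref{useful_calc_1} are pinched between positive constants depending only on $d_1 - d_2$; the ratio is then $O(K(x) 2^{-c(N)})$. In all three cases the ratio tends to $0$ because $c(N) \to \infty$.

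The only mildly delicate point is the case $d_1 = d_2$, where the geometric series in \eqref{foln_calc_1} degenerates into the exponentially large quantity $2^{c(N)}-1$ and cannot be controlled on its own; there the decay of the F\o{}lner ratio is forced by the $c(N)$ factor appearing in $|\Phi_N|$, which is precisely why the construction required $c(N)$ to grow rather than be constant. All other cases are driven by the extra $2^{-c(N)}$ factor that the symmetric-difference bound has compared with $|\Phi_N|$, so no further work is needed beyond the routine manipulation of the already-computed expressions.
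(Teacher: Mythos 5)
Your proposal is correct and follows essentially the same route as the paper's proof: it invokes the precomputed estimates \eqref{useful_calc_1} and \eqref{foln_calc_1}, takes $N$ large enough that $v(N) > w(y)$, and then divides the bounds in the same three cases $d_1=d_2$, $d_1=d_2+1$, and $d_1\geq d_2+2$, each time extracting the decay from $c(N)\to\infty$. The only differences are cosmetic (the shorthand $K(x)$ and the added remark about why the $d_1=d_2$ case needs the $c(N)$ factor in $|\Phi_N|$), so there is nothing to add.
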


\begin{proof}
    
If $d_1 =d_2$, then 
\begin{equation*}
    \frac{| ((x,y)+\Phi_N) \triangle \Phi_N |}{|\Phi_N|}\leq 
    \frac{\big(\sum_{i=1}^{d_1} 2|x_i| \big) 2^{d_1+d_1 c(N) +d_1 v(N) -c(N)}(2^{c(N)}-1)}{c(N) 2^{d_1 c(N) + d_1 v(N)+ d_1}}=
    \frac{\sum_{i=1}^{d_1} 2|x_i|}{c(N)} (1-2^{-c(N)})\xrightarrow{N \to \infty} 0
\end{equation*}
Similarly, if $d_1 =d_2 +1$, then 
\begin{equation}\label{useful_eq_1}
    \frac{| ((x,y)+\Phi_N) \triangle \Phi_N |}{|\Phi_N|}\leq 
    \frac{c(N)\big(\sum_{i=1}^{d_1} 2|x_i| \big)
    2^{d_2 +1 + d_2 c(N) + d_2 v(N)}}{(1-2^{-c(N)})2^{d_2 c(N)+c(N) 
    +d_2 v(N) +d_2 +2}}=
    \frac{\sum_{i=1}^{d_1} 2|x_i|}{1-2^{-c(N)}}\cdot 
    \frac{c(N)}{2^{c(N)+1}}. 
\end{equation}
Since $\lim_{h\to \infty} \frac{h}{2^{h+1}}=0$ and $c(N)\to \infty$, we 
have that $\frac{c(N)}{2^{c(N)+1}} \to 0$ as $N\to \infty$. Also, 
$1-2^{-c(N)}\to 1$ as $N\to \infty$, so from \eqref{useful_eq_1} we 
see that $\frac{| ((x,y)+\Phi_N) \triangle \Phi_N |}{|\Phi_N|} \to 0$
as $N\to \infty$. 

Finally, if $d_1 \neq d_2$ and $d_1 \neq d_2 +1$,
then $d_1 \geq d_2 +2$ and 
\begin{equation}\label{useful_eq_2}
    \frac{| ((x,y)+\Phi_N) \triangle \Phi_N |}{|\Phi_N|}\leq 
    \frac{\big(1-2^{d_2 -d_1}\big) \big(\sum_{i=1}^{d_1} 2|x_i| 
    \big)}{1-2^{d_2 +1 -d_1}}
    \cdot
    \frac{1-2^{(d_2 +1 -d_1)c(N)}}{1- 2^{(d_2 -d_1)c(N)}} 
    \cdot 2^{-c(N)}.
\end{equation}
Since $d_1 \geq d_2 +2$, we have that 
$2^{(d_2 +1 -d_1)c(N)}, 2^{(d_2 -d_1)c(N)} \to 0$ as $N \to \infty$, so
from \eqref{useful_eq_2} we 
see that again 
$\frac{| ((x,y)+\Phi_N) \triangle \Phi_N |}{|\Phi_N|} \to 0$
as $N\to \infty$. 
\end{proof}

\begin{lemma}
    For all $d_1\geq d_2$, the F\o lner sequence $(\Phi_N)_{N \in \N}$ defined in \eqref{eq Folner d_1 > d_2} has ratio $\alpha_{\Phi}=\alpha_G=
    2^{d_2-d_1}$.
\end{lemma}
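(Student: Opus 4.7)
The plan is to compute $\Phi_N/2$ explicitly by applying $D^{-1}$ on each factor of $G=\Z^{d_1}\times (\Z(1/2)/\Z)^{d_2}$, show that its intersection with $\Phi_N$ consists of almost all of the pieces in \eqref{eq Folner d_1 > d_2}, and then take the ratio. First, since the doubling map acts coordinatewise and $\Phi_N$ is a disjoint union of product sets, we have
$$\Phi_N/2 = \bigsqcup_{m=0}^{c(N)-1} D^{-1}(I_{c(N)-m}^{d_1}) \times D^{-1}(C_{v(N)+m+1}^{d_2}).$$
On the $\Z^{d_1}$-factor, $2x\in I_k \setminus\{0\}$ iff $x\in I_{k-1}$ (no kernel issue), giving $D^{-1}(I_k^{d_1})=I_{k-1}^{d_1}$. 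On the torsion factor, \eqref{eq C_N/2} yields $D^{-1}(C_n^{d_2})=C_{n+1}^{d_2}$ for all $n\geq 1$. Therefore
$$\Phi_N/2 = \bigsqcup_{m=0}^{c(N)-1} I_{c(N)-m-1}^{d_1} \times C_{v(N)+m+2}^{d_2} = \bigsqcup_{m=1}^{c(N)} I_{c(N)-m}^{d_1} \times C_{v(N)+m+1}^{d_2}$$
after reindexing $m\mapsto m-1$.

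Next, I would intersect with $\Phi_N = \bigsqcup_{m=0}^{c(N)-1} I_{c(N)-m}^{d_1} \times C_{v(N)+m+1}^{d_2}$. Since the sets $C_n$ are pairwise disjoint for distinct $n$, two pieces with different indices $m$ have empty intersection; for equal indices $m\in\{1,\dots,c(N)-1\}$ the pieces coincide. Hence
$$\Phi_N/2 \cap \Phi_N = \bigsqcup_{m=1}^{c(N)-1} I_{c(N)-m}^{d_1} \times C_{v(N)+m+1}^{d_2}.$$
Using the same cardinality computation that led to \eqref{elts_of_phi_1}, namely $|I_k^{d_1}\times C_n^{d_2}| = 2^{d_1(k+1)+d_2(n-1)}$ applied with $k=c(N)-m$ and $n=v(N)+m+1$, we obtain
$$\frac{|\Phi_N/2\cap\Phi_N|}{|\Phi_N|} = \frac{\sum_{m=1}^{c(N)-1} 2^{(d_2-d_1)m}}{\sum_{m=0}^{c(N)-1} 2^{(d_2-d_1)m}}.$$

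Finally, I would pass to the limit. If $d_1=d_2$ the ratio equals $(c(N)-1)/c(N)\to 1 = 2^{d_2-d_1}$. If $d_1>d_2$, set $q=2^{d_2-d_1}\in(0,1)$; the ratio simplifies to $q(1-q^{c(N)-1})/(1-q^{c(N)})$, and since $c(N)\to\infty$ this tends to $q=2^{d_2-d_1}$. In either case the $\liminf$ defining $\alpha_\Phi$ (the sequence converges) equals $2^{d_2-d_1}$. To conclude $\alpha_\Phi=\alpha_G$, I would invoke the general formula $\alpha_G=\min\{1,r/\ell\}$ from \cref{section_existence}, which here gives $\min\{1,2^{d_2-d_1}\}=2^{d_2-d_1}$ because $d_1\geq d_2$.

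I do not expect a serious obstacle; the only point requiring care is confirming that $D^{-1}(C_n^{d_2})=C_{n+1}^{d_2}$ for every $n\geq 1$ appearing in \eqref{eq Folner d_1 > d_2} (which holds since $v(N)+m+1\geq 1$) and that the reindexing matches pieces exactly, allowing the disjointness of the $C_n$'s to do the rest of the work.
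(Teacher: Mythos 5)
Your proposal is correct and follows essentially the same route as the paper: you compute $\Phi_N/2$ coordinatewise (obtaining the same reindexed disjoint union the paper writes down), identify which pieces survive the intersection with $\Phi_N$ using disjointness of the $C_n$'s, and take the geometric-sum ratio to the limit; the only cosmetic difference is that you compute $|\Phi_N\cap(\Phi_N/2)|$ directly while the paper computes $|\Phi_N\setminus(\Phi_N/2)|$ and subtracts, and you explicitly cite $\alpha_G=\min\{1,r/\ell\}$ from \cref{theorem bound for quasi-invariant} for the final identity $\alpha_\Phi=\alpha_G$, which the paper leaves implicit.
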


\begin{proof}
    For each $N$,
\begin{equation*}
    \Phi_N /2 = 
    \bigsqcup_{m=0}^{c(N)-1} 
I_{c(N)-m-1}^{d_1}
\times C_{v(N)+m+2}^{d_2} =\bigsqcup_{m=1}^{c(N)} 
I_{c(N)-m}^{d_1}
\times C_{v(N)+m+1}^{d_2},
\end{equation*}
so $\Phi_N \setminus (\Phi_N/2)=I_{c(N)}^{d_1} \times C_{v(N)+1}^{d_2}$.

If $d_1 =d_2$, then 
\begin{equation*}
    \frac{|\Phi_N \setminus (\Phi_N/2)|}{|\Phi_N|}=
    \frac{2^{d_1 (c(N)+1)+d_1 v(N)}}{c(N) 2^{d_1 c(N) + d_1 v(N) + d_1}}
    = \frac{1}{c(N)}\xrightarrow{N \to \infty} 0
\end{equation*}
Therefore, 
$\frac{|\Phi_N \cap (\Phi_N/2)|}{|\Phi_N|}=
    1-\frac{|\Phi_N \setminus (\Phi_N/2)|}{|\Phi_N|}\to 1=
    \frac{r}{\ell}$ as $N\to \infty$.

Similarly, if $d_1 > d_2$, then
\begin{equation*}
    \frac{|\Phi_N \setminus (\Phi_N/2)|}{|\Phi_N|}=
    \frac{2^{d_1 (c(N)+1)+d_2 v(N)}}{2^{d_1 c(N) + d_1 + d_2 v(N)}\cdot \frac{1-( 2^{d_2 -d_1} )
        ^{c(N)}}{1-2^{d_2-d_1}}}=
     \frac{1-2^{d_2-d_1}}{1-( 2^{d_2 -d_1} )
        ^{c(N)}} \xrightarrow{N \to \infty} 1 - 2^{d_2 -d_1}
\end{equation*}
where in the final limit we use that $\displaystyle\lim_{N \to \infty} 2^{(d_2 -d_1)c(N)} = 0$. Thus, 
$\frac{|\Phi_N \cap (\Phi_N/2)|}{|\Phi_N|}\to 2^{d_2 -d_1}=
\frac{r}{\ell}$ as $N\to \infty$.
\end{proof}

To conclude this section, we build a set $A\subset G$ with $\diff_{\Phi}(A)=
1-\frac{r}{\ell (\ell+r)} = 1-\frac{\alpha_{\Phi}}{\ell+r}$ so that $A$ contains no $B+B$ for some infinite $B$. We highlight that for this example the density of $A$ exists, that is the upper and lower density coincide.

Recall that for each $N\in \N$, by construction, $c(N)$ is even. Let 
\begin{equation*}
    A_{2,N}= \bigsqcup_{m=0}^{\frac{c(N)}{2}-1} \big(I_{c(N)-2m} \cap 2\Z \big)^{d_1}\times C_{v(N)+2m+1}^{d_2},
\end{equation*}
that is $A_{2,N}$ consists of the elements of $\Phi_N$ for which every $\Z$-coordinates are even and their $(\Z(1/2)/\Z)^{d_2}$ part belongs to 
$C_{v(N)+j} ^{d_2}$ for an odd index $j\in \{1, \ldots, c(N)\}$.

Observe that the $A_{2,N}$'s are pairwise disjoint. Let 
$A_2=\bigsqcup_{N\in \N}
A_{2,N}$. We also define a useful set that is used in this and in the next example,
\begin{equation} \label{eq at least one odd}
    O_{d_1}= (\Z^{d_1} \setminus (2\Z)^{d_1}) = 
\{(x_1, \ldots, x_{d_1}) \in \Z ^{d_1} \colon \text{ at least one } 
x_i \text{ is odd}\}
\end{equation}
and then $A_1 = O_{d_1} \times (\Z(1/2) / \Z)^{d_2}$.
Finally, take $A=A_1 \sqcup A_2$. Notice that
\begin{equation}\label{useful_eq_3}
    A\cap \Phi_N = 
    \Bigg[\bigsqcup_{m=0}^{c(N)-1} I_{c(N)-m}^{d_1} \cap 
O_{d_1}
\times C_{v(N)+m+1}^{d_2}\Bigg]
\sqcup A_{2,N}. 
\end{equation}
We provide a figure below to illustrate $\Phi_N$ (in black) and 
$A\cap\Phi_N$ (in red) in the case $d_1=d_2=1$, that is $G=\Z \times 
\Z(1/2)/\Z$. The dotted lines indicate that we only take 
elements with odd $\Z$-coordinate.

\begin{center}
\begin{tikzpicture}[baseline]
\begin{axis}[
    axis lines=middle,
    title={$\Phi_N$},
    xlabel={$\Z$},
    ylabel={$\Z(1/2)/\Z$},
    xtick={-9.6, -6.0, -3.8, -2, -1.2, -0.5, 0, 0.5, 1.2, 2, 3.8, 6.0, 9.6}, % Custom tick positions
    xticklabels={$-2^{c(N)}$, $-2^{c(N)-1}$, $-2^{c(N)-2}$, $\dots$, $-4$, $-2$, $0$, $2$, $4$, $\dots$, $2^{c(N)-2}$, $2^{c(N)-1}$, $2^{c(N)}$}, % Custom labels
    ytick={0,3,4,5,6.25,7.5,8.5},
    yticklabels={$0$, $C_{v(N)+1}$, $C_{v(N)+2}$, $C_{v(N)+3}$, $\dots$, $C_{v(N)+c(N)-1}$, $C_{v(N)+c(N)}$},
    ymin=0, ymax=10, % Keep y-axis consistent
    xmin=-10, xmax=10, % Ensure x-axis range is correct
    width=17cm, % Increased width
    height=8cm, % Increased height
    ticklabel style={font=\small}, % Adjust label size if needed
    yticklabel style={yshift=0.2cm} % Shift the y-tick labels upwards
]

    % Draw the horizontal lines at different y-values
    \addplot[domain=-9.6:-0.25, samples=2, thick] 
    {2.95};
    \addplot[domain=-9.6:-0.25, samples=2, thick, color=red] 
    {3.05};
    % Constant y = 4 across the domain
    \addplot[domain=0.25:9.6, samples=2, thick] 
    {2.95};
    \addplot[domain=0.25:9.6, samples=2, thick, color=red] 
    {3.05};
    \addplot[domain=-6.0:-0.25, samples=2, thick] 
    {3.95};
    \addplot[domain=-6.0:-0.25, samples=2, line width=1pt, 
    dash pattern=on 2pt off 2pt, color=red] 
    {4.05};
    \addplot[domain=0.25:6.0, samples=2, thick] 
    {3.95};
    \addplot[domain=0.25:6.0, samples=2, line width=1pt,      dash pattern=on 2pt off 2pt, color=red] 
    {4.05};
    \addplot[domain=-3.8:-0.25, samples=2, thick] 
    {4.95};
    \addplot[domain=-3.8:-0.25, samples=2, thick, color=red] 
    {5.05};
    \addplot[domain=0.25:3.8, samples=2, thick] 
    {4.95};
    \addplot[domain=0.25:3.8, samples=2, thick, color=red] 
    {5.05};
    \addplot[domain=-1.2:-0.25, samples=2, thick] 
    {7.45};
    \addplot[domain=-1.2:-0.25, samples=2, thick, color=red] 
    {7.55};
    \addplot[domain=0.25:1.2, samples=2, thick] 
    {7.45};
    \addplot[domain=0.25:1.2, samples=2, thick, color=red] 
    {7.55};
    \addplot[domain=-0.5:-0.25, samples=2, thick] 
    {8.45};
    \addplot[domain=-0.25:-0.5, samples=2, line width=1pt,      dash pattern=on 2pt off 2pt, color=red] 
    {8.55};
    \addplot[domain=0.25:0.5, samples=2, thick] 
    {8.45};
    \addplot[domain=0.25:0.5, samples=2, line width=1pt,      dash pattern=on 2pt off 2pt, color=red] 
    {8.55};
    
\end{axis}
\end{tikzpicture}
\end{center}
\vspace{3mm}

With this formula we can compute the density of $A$. First notice that for each $m\in \{0, 1, \ldots, c(N)-1\}$, 
\begin{equation} \label{eq 2^d_1 - 1 / 2 ^d_1}
    \frac{\big|I_{c(N)-m}^{d_1} \cap O_{d_1}\big|}{\big|I_{c(N)-m}^{d_1}\big|}=
1- \frac{\big|I_{c(N)-m}^{d_1} 
    \cap 2 \Z^{d_1} \big|}
{\big| I_{c(N)-m}^{d_1}\big|}=
\frac{2^{d_1}-1}{2^{d_1}}.
\end{equation}
Therefore,
\begin{equation}\label{useful_eq_4}
\frac{\big|\bigsqcup_{m=0}^{c(N)-1}  \big(
I_{c(N)-m}^{d_1} \cap O_{d_1} \big)
\times C_{v(N)+m+1}^{d_2}\big|}{|\Phi_N|} \nonumber 
=\frac{2^{d_1}-1}{2^{d_1}}. 
\end{equation}

On the other hand,  
$|A_{2,N}|= \left\{ \begin{array}{cc}
    \frac{c(N)}{2} 2^{d_1c(N) + d_1 v(N)}& \text{ if } d_1 =d_2 \\
    2^{d_1c(N) + d_2 v(N)} \cdot \frac{1-2^{(d_2 -d_1)c(N)}}{1-2^{2(d_2 -d_1)}}& \text{ if } d_1 \neq d_2 
\end{array} \right.$

Therefore, if $d_1 =d_2$, then using \eqref{useful_calc_1}, 
\eqref{useful_eq_3}, 
\eqref{useful_eq_4} and the previous calculation for $|A_{2,N}|$ one 
sees that 
\begin{equation*}
\frac{|A\cap \Phi_N|}{|\Phi_N|} = \frac{2^{d_1}-1}{2^{d_1}} + 
\frac{1}{2^{d_1 +1}}=
\frac{2^{d_1+1}-1}{2^{d_1+1}}=1-\frac{\alpha_{\Phi}}{\ell+r}.
\end{equation*}
Therefore, the density $d_{\Phi}(A)$ exists and is equal to 
$1-\frac{\alpha_{\Phi}}{\ell+r}$.

Now, if $d_1 \neq d_2$, then again
using \eqref{useful_calc_1}, \eqref{useful_eq_3}, 
\eqref{useful_eq_4} and the previous calculation for $|A_{2,N}|$ one 
sees that 
\begin{equation*}
\frac{|A\cap \Phi_N|}{|\Phi_N|} = \frac{2^{d_1}-1}{2^{d_1}} + 
\frac{1}{2^{d_1}+2^{d_2}}=\frac{2^{2d_1}+2^{d_1 +d_2} -2^{d_2}}{2^{d_1}
(2^{d_1}+2^{d_2})}=1-\frac{\alpha_{\Phi}}{\ell+r}.
\end{equation*}
Therefore, also in the case $d_1 \neq d_2$, the density $d_{\Phi}(A)$
exists and is equal to $1-\frac{\alpha_{\Phi}}{\ell+r}$.

Now, it remains to prove that there is no infinite $B$ so that 
$B+B\subset A$. Assume that there is some infinite $B$ so that 
$B+B\subset A$. 
Since $B$ is infinite, we may assume that all the elements of $B$ have the same parity in their
$\Z$-coordinates, that is if $b = (x,y) $ and $b'= (x',y')$ two elements in $B$, then $x_i = x'_i \mod 2$ for all $i = 1, \ldots, d_1$.

Fix $b =(x,y) \in B$. Then by assumption $2b \in A$ and also $2b \in (2\Z)^{d_1} \times (\Z(1/2)/\Z)^{d_2}$.
Therefore $2b \in A_2$, in particular there is unique
$N_1 \in \N$ so that $2b \in A_{2,N_1}$. Since $B$ is infinite and 
$\bigcup_{j\leq N_1} A_{2,j}$ is finite, there exists 
$b'=(x',y') \in B$ so that $2b' \in A_{2,N_2}$ for some $N_2 > N_1$. 

Then, from the definition of the $A_{2,N}$'s we see that there are
$m_1\in \{0,1,\ldots, \frac{c(N_1)}{2}-1\} $ and 
$m_2\in \{0,1,\ldots, \frac{c(N_2)}{2}-1\} $ so that
$2b\in \big(I_{c(N_1)-2m_1}\cap 2\Z \big)^{d_1}
\times C_{v(N_1)+2m_1+1} ^{d_2}$
and 
$2b'\in \big(I_{c(N_2)-2m_2}\cap 2\Z \big)^{d_1}
\times C_{v(N_2)+2m_2+1} ^{d_2}$.
From the previous we infer that 
\begin{align*}
    x\in I_{c(N_1)-2m_1-1}^{d_1}, ~ y\in C_{v(N_1)+2m_1+2} ^{d_2}, ~
    x'\in I_{c(N_2)-2m_2-1}^{d_1} ~\text{ and } ~ y'\in C_{v(N_2)+2m_2+2} ^{d_2}.  
\end{align*}

By the parity assumption in $B$, $x + x' \in (2\Z)^{d_1}$,
so from the definition of $A$ and since 
$b + b' \in A$, we obtain that $b +b' \in A_2$. 
Now since $v(N_2)+2m_2 +2 > v(N_1)+2m_1 +2$ and using the properties of the function $\theta \colon \Z(1/2)/\Z \to \N_0 $ defined in \eqref{eq degree function}, we have that for all $j = 1, \ldots, d_2$, $\theta(y_j + y'_j) = \max\{ \theta(y_j), \theta(y'_j)\} =  \max\{v(N_1)+2m_1 +2, v(N_2)+2m_2 +2 \} $ and therefore
$y_j + y'_j \in C_{v(N_2)+2m_2+2}^{d_2}$.  

Thus $b + b' = (x+x', y+y') \in (2\Z)^{d_1} \times C_{v(N_2)+2m_2+2}^{d_2}$,
which is disjoint from $A_2$. This is a contradiction and therefore, 
there is no infinite $B$ so that $B+B \subset A$, concluding the construction of the example in the case 
$\ell \geq r$. 

\subsection{Case $\ell = 2^{d_1}$ and $r = 2^{d_2}$ with $1\leq d_1 < d_2$}
\label{sec_ex_ell<r}

In this subsection, for $d_1, d_2 \in \N$ with $d_1 < d_2$, we consider again the group $G=\Z ^{d_1}\times (\Z(1/2) / \Z)^{d_2}$, so $\ell=2^{d_1}$ and $r=2^{d_2}$, but this time we have $\ell < r$.
As before, for every $z\in G$, we write $z=(z^{(1)}, z^{(2)})$, where $z^{(1)}\in \Z^{d_1}$ and $z^{(2)}\in (\Z(1/2)/\Z)^{d_2}$. For constructing the correspondent F\o lner sequence and the set $A$, again we use the sets $C_k \subset \Z(1/2)/\Z$ defined in \eqref{eq family C_N}, $I_k = [-2^k,2^k]\setminus\{0\} \subset \Z $ and $O_{d_1} \subset \Z^{d_1}$ as in \eqref{eq at least one odd}.

For this example, again we use $c(N),v(N)$ be two strictly increasing sequences of 
natural numbers with similar properties. In particular, $c(N)$ is even 
and $v(N+1)-c(N+1)>v(N)+1$ for all $N\in \N$. In this case we still can take can take $v(N)=3^N$ and $ c(N)=2N$. In this subsection, many computations are omitted, as they are very similar to the ones carried out in \cref{sec_ex_ell>r}.

The first major change is the definition of the \Folner{} sequence. For each $N$, set 
\begin{equation} \label{eq Folner d_1 < d_2}
\Phi_N= I^{d_1}_{c(N)}
\times C_{v(N)}^{d_2}\sqcup 
I_{c(N)+1}^{d_1}
\times C_{v(N)-1}^{d_2}\sqcup 
\cdots \sqcup 
I_{2c(N)-1}^{d_1} \times C_{v(N)-c(N)+1}^{d_2}
= \bigsqcup_{m=0}^{c(N)-1} 
I_{c(N)+m}^{d_1}
\times C_{v(N)-m}^{d_2}.
\end{equation}

As $v(N+1)-c(N+1)>v(N)+1$, the $\Phi_N$'s are pairwise disjoint. The proof that $(\Phi_N)_{N\in \N}$ is a \Folner{} sequence in $G$ is analogous to the one carried out in \cref{lemma Phi is a Folner in d_1 d_2}, so it is omitted. Similarly 
to \cref{sec_ex_ell>r} we have

\begin{lemma}
    For all $d_1 < d_2$, the F\o lner sequence $(\Phi_N)_{N \in \N}$ defined in \eqref{eq Folner d_1 < d_2} has ratio $\alpha_{\Phi} = 1 $.
\end{lemma}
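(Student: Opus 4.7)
The plan is to compute the set $\Phi_N/2 = D^{-1}(\Phi_N)$ explicitly layer by layer, and to show that it differs from $\Phi_N$ only in a single ``boundary'' layer whose measure is negligible relative to the whole.

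First, I would analyze the doubling map on each factor separately. On $\Z$, the map $x\mapsto 2x$ satisfies $D^{-1}(I_k) = I_{k-1}$, since $2x \in [-2^k,2^k]\setminus\{0\}$ is equivalent to $x \in [-2^{k-1},2^{k-1}]\setminus\{0\}$. On $\Z(1/2)/\Z$, by \eqref{eq C_N/2} we have $C_k/2 = C_{k+1}$ for every $k\geq 1$. Since the condition $v(N)-c(N)+1\geq 1$ (arranged by the hypothesis $v(N+1)-c(N+1)>v(N)+1$ and the strict increase of $v$) guarantees that every index $v(N)-m$ appearing in \eqref{eq Folner d_1 < d_2} is at least $1$, both identities apply on all layers. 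Hence, layer by layer,
\begin{equation*}
\Phi_N/2 = \bigsqcup_{m=0}^{c(N)-1} I_{c(N)+m-1}^{d_1} \times C_{v(N)-m+1}^{d_2} = \bigsqcup_{m=-1}^{c(N)-2} I_{c(N)+m}^{d_1} \times C_{v(N)-m}^{d_2}
\end{equation*}
after the change of index $m\mapsto m-1$. Comparing with \eqref{eq Folner d_1 < d_2}, the two decompositions share the layers $m=0,\ldots,c(N)-2$, so
\begin{equation*}
\Phi_N\setminus(\Phi_N/2) = I_{2c(N)-1}^{d_1} \times C_{v(N)-c(N)+1}^{d_2},
\end{equation*}
a single ``outermost'' layer.

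Next, I would estimate cardinalities. Since $|I_k|=2^{k+1}$ and $|C_k|=2^{k-1}$ for $k\geq 1$, writing $q=2^{d_1-d_2}$ and $K_N=2^{d_1 c(N)+d_1+d_2 v(N)-d_2}$, each layer has size
\begin{equation*}
|I_{c(N)+m}^{d_1}\times C_{v(N)-m}^{d_2}| = K_N\, q^m.
\end{equation*}
Summing the geometric series yields $|\Phi_N| = K_N(1-q^{c(N)})/(1-q)$ and $|\Phi_N\setminus(\Phi_N/2)| = K_N\,q^{c(N)-1}$, so
\begin{equation*}
\frac{|\Phi_N\setminus(\Phi_N/2)|}{|\Phi_N|} = \frac{(1-q)\,q^{c(N)-1}}{1-q^{c(N)}}.
\end{equation*}
Because $d_1<d_2$ gives $q<1$ and because $c(N)\to\infty$, this ratio tends to $0$; hence $|\Phi_N\cap(\Phi_N/2)|/|\Phi_N|\to 1$, and by \eqref{eq alpha_Phi defn} we conclude $\alpha_\Phi=1$.

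There is essentially no obstacle beyond careful index bookkeeping: the content of the argument is simply that doubling shifts the layer index $m$ of the product decomposition \eqref{eq Folner d_1 < d_2} by $-1$ uniformly (contraction on $\Z^{d_1}$ cancels against expansion of preimages on $(\Z(1/2)/\Z)^{d_2}$), so the symmetric difference between $\Phi_N$ and $\Phi_N/2$ is a single layer, and when $d_1<d_2$ that single layer is the geometrically smallest term in the sum.
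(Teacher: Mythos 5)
Your proof is correct and follows essentially the same approach as the paper: compute $\Phi_N/2$ layer by layer, identify the symmetric difference $\Phi_N\setminus(\Phi_N/2)$ as the single outermost layer $I_{2c(N)-1}^{d_1}\times C_{v(N)-c(N)+1}^{d_2}$, and show that its cardinality is the smallest term of the geometric series for $|\Phi_N|$ when $d_1<d_2$. Your explicit ratio $\frac{(1-q)q^{c(N)-1}}{1-q^{c(N)}}$ with $q=2^{d_1-d_2}$ is in fact the accurate one (the paper's displayed expression appears to carry a small algebraic slip in the exponent, though both tend to $0$), and your slightly informal remark that $v(N)-c(N)+1\geq 1$ is ``arranged by the hypothesis'' should really be read as holding for all sufficiently large $N$, which is all the $\liminf$ requires.
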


\begin{proof}
For each $N$, $\displaystyle \Phi_N /2 = \bigsqcup_{m=0}^{c(N)-1} 
I_{c(N)+m-1}^{d_1} \times C_{v(N)-m+1}^{d_2} $, therefore $ \displaystyle \Phi_N \setminus (\Phi_N/2)=
I_{2c(N)-1}^{d_1} \times C_{v(N)-c(N)+1}^{d_2}$.
Performing similar computations to the ones in \cref{sec_ex_ell>r}, one sees that 
\begin{equation}\label{1_useful_calc_1}
  |\Phi_N|=
  2^{d_1 c(N) + d_1 + d_2 v(N)-d_2}\cdot \frac{1-( 2^{d_1 -d_2} )
        ^{c(N)}}{1-2^{d_1-d_2}},
\end{equation}
and therefore
\begin{equation*}
    \frac{|\Phi_N \setminus (\Phi_N/2)|}{|\Phi_N|}=
    \big(1-2^{d_1 -d_2}\big) 2^{d_1 -d_2}
    \frac{2^{(d_1 -d_2)(c(N)+1)}}{1-2^{(d_1 -d_2)c(N)}}\xrightarrow{N \to \infty} 0,
\end{equation*}
 because $2^{(d_1 -d_2)c(N)} \to 0$ as $N \to \infty$. Therefore $\alpha_{\Phi} = 1$. 
\end{proof}

Using the same structure as in \cref{sec_ex_ell>r}, we end this section by building
a set $A\subset G$ such that $\diff_{\Phi}(A)= 1-\frac{1}{\ell+r} = 
1-\frac{\alpha_{\Phi}}{\ell+r}$ and if $B+B \subset A$ then $B$ is finite. We 
highlight that, as last time, the density of the set $A$ exists. The construction 
of $A$ is also similar. First, for all $N \in \N$ set 
\begin{equation*}
    A_{2,N}= \bigsqcup_{m=0}^{\frac{c(N)}{2}-1} \big( I_{c(N)+2m}\cap 2\Z \big)^{d_1}\times 
    C_{v(N)-2m}^{d_2}. 
\end{equation*}

Observe that the $A_{2,N}$'s are pairwise disjoint, so conveniently we define 
$A_2=\bigsqcup_{N\in \N}
A_{2,N}$. Like last time, $A_1 = O_{d_1} \times (\Z(1/2) / \Z)^{d_2}$ and $A=A_1 \sqcup A_2$. The proof that $A$ does not contain an infinite sumset $B+B$ is completely analogous to the one in the previous section, so we only compute the density where the computations are slightly different. Notice that,
\begin{equation}\label{1_useful_eq_3}
    A\cap \Phi_N = 
    \Bigg[\bigsqcup_{m=0}^{c(N)-1} 
(I_{c(N)+m}^{d_1} \cap O_{d_1})
\times C_{v(N)-m}^{d_2}\Bigg]
\sqcup A_{2,N}. 
\end{equation}

For every $m\in \{0, 1, \ldots, c(N)-1\}$, as in \eqref{eq 2^d_1 - 1 / 2 ^d_1}, $\displaystyle \frac{|I_{c(N)+m} ^{d_1} \cap O_{d_1}|}{|I_{c(N)+m} ^{d_1}|} = \frac{2^{d_1}-1}{2^{d_1}}$
and therefore 
\begin{equation}\label{1_useful_eq_4}
\frac{\big|\bigsqcup_{m=0}^{c(N)-1} 
\big(I_{c(N)+m} ^{d_1} \cap O_{d_1} \big) 
\times C_{v(N)-m}^{d_2}\big|}{|\Phi_N|} =\frac{2^{d_1}-1}{2^{d_1}}. 
\end{equation}

In addition, 
$|A_{2,N}|=2^{d_1c(N) + d_2 v(N) - d_2} \cdot 
\frac{1-2^{(d_1 -d_2)c(N)}}{1-2^{2(d_1 -d_2)}}$. Therefore, using \eqref{1_useful_calc_1}, \eqref{1_useful_eq_3}, 
\eqref{1_useful_eq_4} and the expression for $|A_{2,N}|$, one 
sees that 
\begin{equation*}
\frac{|A\cap \Phi_N|}{|\Phi_N|} = \frac{2^{d_1}-1}{2^{d_1}} + 
\frac{2^{d_2 -d_1}}{2^{d_1}+2^{d_2}}= 1-\frac{1}{\ell+r}
\end{equation*}
Therefore the density $\diff_{\Phi}(A)$ exists and is equal to $1-\frac{1}{\ell+r}$. 
This concludes the construction of the example in the case  $\ell < r$. 

\begin{remark*}
We note here that in both Sections \ref{sec_ex_ell>r} and \ref{sec_ex_ell<r} the 
the \Folner{} sequences have the same triangular shape: they are of the form 
$\Phi_N=\bigsqcup_{m} I_{c_1(m)}^{d_1} \times C_{c_2(m)}^{d_2}$, where as 
$c_2(m)$ grows larger, so we take more elements from $(\Z(1/2)/\Z)^{d_2}$, 
$c_1(m)$ becomes smaller, so we have less elements from $\Z^{d_1}$. 
However, the \Folner{} sequence $\Phi$ defined in \cref{sec_ex_ell>r} is not 
a \Folner{} sequence in the case $\ell>r$ (because then $2^{d_2-d_1} >1$). 
This is the reason why we have to consider different $\Phi$'s in the two cases.
\end{remark*}

\section{Necessity of the assumptions in the main theorem}\label{section_necessity}

Herein, we show that the assumptions on the group and the \Folner{} 
sequence in \cref{main_theorem_1} are necessary.

\subsection{The kernel of the doubling map has to be finite}
\label{finite_kernel_is_necessary}

We first prove that the assumption $r=|\ker(D)|<\infty$ is 
necessary.

\begin{proposition}
    There is a group $G$ with $r=|\ker(D)|=\infty$, a \Folner{} sequence 
    $\Phi$ in $G$ with $\alpha_{\Phi}=1$, and a set $A\subset G$ with 
    $\overline{\textup{d}}_{\Phi}(A)=1$ so that for any infinite $B\subset G$ 
    we have $B+B \not \subset A$.
\end{proposition}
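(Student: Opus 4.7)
The plan is to exploit the group $G = \mathbb{F}_2^\omega := \bigoplus_{n\in\N}\F_2$, the countable direct sum of copies of $\F_2$. In this group every element has order two, so the doubling map $D$ is identically zero and $\ker(D) = G$ is infinite, giving $r = \infty$. Crucially, this also forces $b+b = 0$ for every $b\in G$, hence $0 \in B+B$ for every nonempty $B \subset G$. Consequently, any subset $A$ with $0 \notin A$ automatically satisfies $B+B \not\subset A$ for every nonempty (and in particular every infinite) $B \subset G$. This single observation is what drives the entire construction.

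Next I would take the natural exhaustion by finite subgroups $\Phi_N = \bigoplus_{n=1}^N \F_2 \subset G$, so that $|\Phi_N|=2^N$. Since each $\Phi_N$ is a subgroup and every fixed $g\in G$ has finite support, $g \in \Phi_N$ for all sufficiently large $N$, whence $g+\Phi_N = \Phi_N$, verifying the Følner condition. To compute the doubling ratio, note that $D \equiv 0$ and $0 \in \Phi_N$, so
\begin{equation*}
    \Phi_N/2 \;=\; D^{-1}(\Phi_N) \;=\; G,
\end{equation*}
hence $|\Phi_N/2 \cap \Phi_N|/|\Phi_N| = 1$ for every $N$ and $\alpha_\Phi = 1$, as required.

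Finally, set $A := G \setminus \{0\}$. Then
\begin{equation*}
\frac{|A\cap\Phi_N|}{|\Phi_N|} \;=\; \frac{2^N-1}{2^N} \;\xrightarrow[N\to\infty]{}\; 1,
\end{equation*}
so $\overline{\textup{d}}_\Phi(A)=1$, while the opening observation gives $B+B \not\subset A$ for every nonempty $B\subset G$. There is essentially no obstacle to the argument; the proof is short precisely because in a group of exponent two every sumset is forced to contain the identity, so removing just one point from a full-density set is enough to destroy every possible $B+B$ configuration. This is in sharp contrast to the situation $r<\infty$ handled by \cref{main_theorem_1}.
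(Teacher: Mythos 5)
Your argument is logically correct for the proposition exactly as stated, but it takes a much more degenerate route than the paper and, in doing so, loses the point the proposition is meant to make.

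In $\F_2^\omega$ the doubling map is identically zero, so not only is $\ker(D)=G$ infinite, but also $2G=\{0\}$ and hence $\ell=[G:2G]=\infty$. The paper's proof works instead in $(\Z(1/2)/\Z)^\omega$, where $\ell=1$ is finite and only $r$ is infinite. That distinction matters: the proposition sits in a subsection whose purpose is to show that the hypothesis $r<\infty$ in \cref{main_theorem_1} cannot be dropped, and a clean necessity argument should break exactly that hypothesis while keeping $\ell<\infty$. Your group breaks both hypotheses at once, so it does not isolate the role of the kernel. Moreover, because your group has exponent $2$, the failure of $B+B\subset A$ is entirely the trivial fact that $0=b+b\in B+B$ for any nonempty $B$; removing the single point $0$ kills every unrestricted sumset. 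Indeed, in your group $2G$ is finite, and the paper's own \cref{t+B+B in finite 2G} shows that in that situation every set of full upper Banach density does contain a shifted sumset $t+B+B$ with $B$ infinite — so the obstruction you produce is extremely superficial and disappears as soon as one allows a shift, unlike the genuinely structural obstruction built in the paper's $(\Z(1/2)/\Z)^\omega$ example. In short: your proof verifies the literal wording of the proposition, but the paper's construction is doing substantially more work (and for a good reason), and your example should not be regarded as an adequate replacement for it.
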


\begin{proof}
Let $G=\Z (\frac{1}{2})/\Z$ and consider the group $G^{\omega}$ defined by
\begin{equation*}
G^{\omega}= \bigoplus_{i \in \N} G = \{ \boldsymbol{g} = (g_i)_{i\in \N} \in G^{\N} \mid  g_i \neq 0 \text{ for finitely many } i\text{'s}\}.
\end{equation*}
Then $\ell_{G^{\omega}}=1, r_{G^{\omega}}=\infty$. 
Recall the definition of $C_n$ given in \eqref{eq family C_N}. We have 
$|C_0|=1$, $|C_n|=2^{n-1}$ for $n\in \N$, the
sets $(C_n)_{n\in \N_0}$ are pairwise disjoint and 
$G=\bigcup_{n\in \N_0} C_n$. 
Also, the sequence $F=(F_N)_{N\in \N}$ defined by 
$F_N=\bigcup_{0\leq n \leq N} C_n$ is a \Folner{} sequence
in $G$ and $F_N/2 = F_{N+1} = F_N \cup C_{N+1} \supset F_N$,
so $\frac{|F_N/2\cap F_N|}{|F_N|}=1$ and hence
$\alpha_F=1$. Also, for all $N\in \N$, $|F_N|=2^N$.

Now, consider the sequence $(\Phi_N)_{N\in \N}$ of subsets of 
$G^{\omega}$ defined by 
\begin{equation*}
    \Phi_N = \underbrace{F_N\times \dots \times F_N}_{N 
    \text{ times }} \times \{0\}^{\omega} =\{ \boldsymbol{g} = (g_i)_{i\in \N} \colon g_i \in F_N 
    \text{ for } 1\leq i \leq N, g_i=0 \text{ for } i>N \}.
\end{equation*}
Then $\Phi=(\Phi_N)_{N\in \N}$ is a \Folner{} sequence in 
$G^{\omega}$. This F\o lner sequence shares some properties with the one in 
$G^d$ studied in \cref{ex in Z(1/2) / Z}. In particular,  
\begin{equation*}
    \Phi_N /2 = \underbrace{F_N /2\times \dots \times F_N/2}_{N 
    \text{ times }} \times \left\{0, 1/2 \right\}^{\omega} = \underbrace{F_{N+1}\times \dots \times F_{N+1}}_{N 
    \text{ times }} \times \left\{0, 1/2 \right\}^{\omega},
\end{equation*}
and therefore, $ \Phi_N \subset \Phi_N /2$, which implies that $\alpha_{\Phi}=1$. 

Consider now the set 
\begin{align*}
A = \bigsqcup_{m \in \N}  \left[
\big(\underbrace{F_{2m + 1} \times \dots \times F_{2m + 1}}_{2m +1
\text{ times }} \times \{0\}^{\omega} \big)
\setminus 
\big(\underbrace{F_{2m} \times \dots \times F_{2m}}_{2m +1
\text{ times }} \times \{0\}^{\omega} \big) \right] .
\end{align*}
Then for each $N\in \N$ we have that 
\begin{align*}
A \cap \Phi_{2N+1}\supset 
 \big(
\underbrace{F_{2N + 1} \times \dots \times F_{2N + 1}}_{2N +1
\text{ times }} \times \{0\}^{\omega} \big)
\setminus 
\big(\underbrace{F_{2N} \times \dots \times F_{2N}}_{2N +1
\text{ times }} \times \{0\}^{\omega} \big),
\end{align*}
so $|A \cap \Phi_{2N+1}|\geq |F_{2N+1}|^{2N+1} - |F_{2N}|^{2N+1}=
2^{(2N+1)^2} - 2^{2N(2N+1)}$, which in turn implies that 
\begin{equation*}
\lim_{N \to \infty}\frac{|A \cap \Phi_{2N+1}|}{|\Phi_{2N+1}|} \geq  \lim_{N \to \infty}
\frac{2^{(2N+1)^2} - 2^{2N(2N+1)}}{2^{(2N+1)^2}}
=1-\lim_{N \to \infty} \frac{1}{2^{2N+1}} = 1 
\end{equation*}
Thus, we have that $\overline{\diff}_{\Phi}(A)=1$. 
We are left with proving there is no infinite set $B\subset G^{\omega}$ so that 
$B + B \subset A$. 

We denote denote by $\boldsymbol{0}$ the identity element $(0,0, \dots, 0, \dots)$ of $G^{\omega}$.
Consider the map $\theta \colon  G \to \N_0$ defined in \eqref{eq degree function}. Similarly to \eqref{eq norm index}, let 
$w \colon G^{\omega} \to \N_0$ be the function given by $w(\boldsymbol{g})=\max\{\theta(g_i) \colon i\in \N\} $. We also define $\tau \colon G^{\omega} \to \N_0$ by 
$\tau(\boldsymbol{0})= 0 $ and
$\tau(\boldsymbol{g})= \max\{i\in \N: g_i \neq 0\}  $ for $\boldsymbol{g}\neq 
\boldsymbol{0}$. As in the finite dimensional case, for 
$\boldsymbol{g}, \boldsymbol{g}' \in G^{\omega}, 
w(\boldsymbol{g}+ \boldsymbol{g}' )\leq \max\{w(\boldsymbol{g}),
w(\boldsymbol{g}') \}$ and if 
$w(\boldsymbol{g}) \neq w(\boldsymbol{g}')$, then 
$w(\boldsymbol{g}+ \boldsymbol{g}' )= \max\{w(\boldsymbol{g}),
w(\boldsymbol{g}') \}$. Observe that 
\begin{equation}\label{eq_inf_2}
    A=\{\boldsymbol{g} \in G^{\omega} \mid
    w(\boldsymbol{g}) \text{ odd }, w(\boldsymbol{g})\geq 3,
    \tau(\boldsymbol{g})\leq w(\boldsymbol{g}) \} = \bigsqcup_{m\in \N} \{\boldsymbol{g} \in G^{\omega} \mid
    w(\boldsymbol{g}) = 2m+1,\tau(\boldsymbol{g}) \leq 2m+1\}.
\end{equation}
Assume that there is an infinite set $B \subset G^{\omega}$ so that 
$B+B \subset A$, and without loss of generality assume that 
$\boldsymbol{0} \notin B$. We want to reach a contradiction, and for that we 
separate cases. 

First, assume that the set $\{w(\boldsymbol{b}): \boldsymbol{b}\in B\}$ is 
finite, and take $M\in \N$ so that $w(\boldsymbol{b})\leq 2M+1$ for all 
$\boldsymbol{b} \in B$. Then for all $\boldsymbol{b},
\boldsymbol{b}'\in B$ we have that 
$w(\boldsymbol{b}+\boldsymbol{b}')\leq \max\{ w(\boldsymbol{b}),
w(\boldsymbol{b}')\}\leq 2M+1$. 
Therefore, using \eqref{eq_inf_2} we get
\begin{equation}\label{eq_inf}
B + B \subset 
\bigsqcup_{1\leq m \leq M} \{\boldsymbol{g} \in G^{\omega} \colon
w(\boldsymbol{g}) = 2m+1 \text{ and } \tau(\boldsymbol{g}) \leq 2m+1\} \subset \Phi_{2M+1},
\end{equation}
which implies that $B+B$ is finite and in particular $2B$ is finite. Since $B$ is 
infinite, by pigeonhole principle, there exists $\mathbf{a} \in \Phi_{2M+1}$ and an infinite 
subset $B' \subset B$ such that $2\boldsymbol b = \mathbf{a}$ for all $\boldsymbol b \in B'$. 
Also by pigeonhole principle, we can suppose, without loss of generality, that the 
first $2M+1$ coordinates of the elements in $B'$ are equal. Let 
$\boldsymbol b,\boldsymbol{b'}$ be two distinct elements in $B'$. 
Since $b_i = b'_i$ for all $i \leq 2M+1$, there exists a coordinate $j > 2M+1$ 
such that $b_j \neq b_j'$. Fixing that index $j > 2M+1$, since $a \in \Phi_{2M+1}$, 
$a_j = 0$ and hence  $2b_j = 2b_j'  =  0$. This implies that 
$b_j, b'_j \in \{0,\frac{1}{2}\}$ and therefore, using that $b_j \neq b_j'$, we must have $b_j + b_j' = \frac{1}{2}$,
contradicting \eqref{eq_inf}. Thus, the set 
$\{w(\boldsymbol{b}): \boldsymbol{b}\in B\}$ has to be infinite.

Now, similarly to the proof of \cref{lemma in z(1/2) about A}, suppose $\{w(\boldsymbol{b}): \boldsymbol{b}\in B\}$ is infinite and let $\boldsymbol{b}, \boldsymbol{b'} \in B$ with 
$w(\boldsymbol{b'})> w(\boldsymbol{b})>0$.
Notice that $w(2\boldsymbol{b})=
w(\boldsymbol{b})-1$ and therefore, since $2\boldsymbol{b}
\in A$, from \eqref{eq_inf_2} we have that 
$w(2\boldsymbol{b}) $ is odd, so $w(\boldsymbol{b})$ is even. The same is true for $w(\boldsymbol{b'})$. Then we have that 
$w(\boldsymbol{b}+\boldsymbol{b'})=
\max\{w(\boldsymbol{b}), w(\boldsymbol{b'}) \}=
w(\boldsymbol{b'})$ which is even, and from \eqref{eq_inf_2} this 
contradicts the fact that $\boldsymbol{b}+ \boldsymbol{b'}\in A$.

To summarize, $A$ has full upper density with respect to the q.i.d.
F\o lner sequence $\Phi$ with ratio $\alpha_{\Phi}=1$, but there is no infinite set 
$B\subset G^{\omega}$ so that $B+B \subset A$. 
\end{proof}

\subsection{The F\o lner sequence has to be quasi-invariant with respect to doubling} \label{section qid is necesary}

Here we show that, if $G$ is a countable abelian group
with $2G$ infinite (note that if $[G:2G]<\infty$, then $2G$ 
is infinite), then the q.i.d. assumption is necessary 
for a density solution to the unrestricted $B+B$ problem.
We include the following useful remark, whose proof is
straightforward, so it is omitted.

\begin{remark}\label{non_qid_remark}
    Let $G$ be a countable abelian group and $\Phi=(\Phi_N)_{N\in \N}$ be a 
    \Folner{} sequence in $G$. Then 
    \begin{enumerate}[(i)]
    \item \label{non_qid_remark i} If $\Psi=(\Psi_N)_{N\in \N}$ is a sequence of 
    subsets of $G$ so that $\Psi_N\subset \Phi_N$ for all $N$ and $|\Psi_N| / |\Phi_N| \to 1$ as $N\to \infty$, then $\Psi$ is also a \Folner{} 
    in $G$.
    \item \label{non_qid_remark ii} If $H$ is a subgroup of $G$ so that 
    $[G:H] =\infty$, then for every $g\in G$ we have $\diff _{\Phi} (g+H)=0.$
    \item \label{non_qid_remark iii} Using the sub-additivity of the density, if $E_1,E_2 \subset G$ are such that the densities
$\diff_{\Phi}(E_1), \diff_{\Phi}(E_2)$ exist and $\diff_{\Phi}(E_1)=1$, 
then the density $\diff_{\Phi}(E_1 \cap E_2)$ exists and it is equal to 
$\diff_{\Phi}(E_2)$.
\end{enumerate}
\end{remark}

To achieve our aim we need the following lemma.

\begin{lemma}\label{non_qid_lemma}
    Let $G$ be a countable abelian group with $2G$ infinite, let $\Phi$ 
    be a \Folner{} sequence 
    in $G$ that is not quasi-invariant with respect to doubling, and let 
    $G=\{x_1, x_2, x_3, \ldots \}$ be an enumeration of $G$. Then there 
    is a subsequence $\Psi=(\Psi_N)_{N\in \N}$ of $\Phi$ and a \Folner{} sequence
    $F=(F_N)_{N\in \N}$ in $G$ so that for all $N$, $F_N \subset \Psi_N$,
    $(F_j +x_i) /2 \cap F_N = \emptyset$ whenever $i,j<N$,
    $$ \lim_{N\to \infty} \frac{|F_N|}{|\Psi_N|}=1 \text{ and } 
    \lim_{N\to \infty} \frac{|F_N/2 \cap F_N|}{|F_N|}=0.$$
\end{lemma}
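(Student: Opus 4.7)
My plan is to extract a subsequence $\Psi$ of $\Phi$ along which the doubling ratio vanishes, and then to obtain each $F_N$ by deleting from $\Psi_N$ the finitely many preimage sets $D^{-1}(F_j+x_i)$ for $i,j<N$. The crucial structural observation is that, since $2G$ is infinite and $D$ induces an isomorphism $G/\ker(D)\cong 2G$, we have $[G:\ker(D)]=|2G|=\infty$. Hence by \cref{non_qid_remark}(ii), every coset of $\ker(D)$ has upper $\Phi$-density zero, and so does any finite union of such cosets.

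I would carry this out by induction on $N$. Suppose $n_1<\cdots<n_{N-1}$ and finite sets $F_j\subset\Psi_j:=\Phi_{n_j}$ have already been constructed. For each $i,j<N$, the set $(F_j+x_i)/2=D^{-1}(F_j+x_i)$ is a union of at most $|F_j|$ cosets of $\ker(D)$, hence has zero upper $\Phi$-density. Combined with the non-q.i.d. hypothesis $\liminf_n|\Phi_n/2\cap\Phi_n|/|\Phi_n|=0$, this allows me to choose $n_N>n_{N-1}$ so that
\begin{equation*}
\frac{|\Phi_{n_N}/2\cap\Phi_{n_N}|}{|\Phi_{n_N}|}<\frac{1}{N}\quad\text{and}\quad\frac{|\Phi_{n_N}\cap D^{-1}(F_j+x_i)|}{|\Phi_{n_N}|}<\frac{1}{N^{3}}\ \text{for all}\ i,j<N.
\end{equation*}
I then set $\Psi_N:=\Phi_{n_N}$ and $F_N:=\Psi_N\setminus\bigcup_{i,j<N}D^{-1}(F_j+x_i)$.

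Verifying the four required properties is routine. The disjointness $(F_j+x_i)/2\cap F_N=\emptyset$ for $i,j<N$ is immediate from the definition of $F_N$. Since at most $(N-1)^{2}$ subsets, each of size less than $|\Psi_N|/N^{3}$, are removed from $\Psi_N$, we get $|F_N|/|\Psi_N|\geq 1-(N-1)^{2}/N^{3}\to 1$. This together with the first estimate yields $|F_N/2\cap F_N|/|F_N|\leq |\Psi_N/2\cap\Psi_N|/|F_N|\to 0$. Finally, $F$ is a \Folner{} sequence by \cref{non_qid_remark}(i), since $(\Psi_N)$ is a subsequence of $\Phi$ and hence itself \Folner{}, and $F_N\subset\Psi_N$ with $|F_N|/|\Psi_N|\to 1$.

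The only point really requiring care is the density-zero step for $D^{-1}(F_j+x_i)$; this is precisely where the hypothesis $|2G|=\infty$ enters, via the first isomorphism theorem applied to $D$. The rest of the argument is a standard diagonal construction.
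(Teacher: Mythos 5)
Your proof is correct and follows essentially the same route as the paper's: you identify that $[G:\ker(D)]=\infty$ via the first isomorphism theorem, use density-zero of finite unions of $\ker(D)$-cosets together with $\liminf$ of the doubling ratio being zero to select indices inductively, and define $F_N$ by deleting the offending preimages from $\Psi_N$. The only cosmetic difference is that the paper first passes to a subsequence of $\Phi$ along which the doubling ratio converges to zero and then handles only the coset-density condition in the induction, whereas you impose both conditions simultaneously at each step; these are equivalent.
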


\begin{proof}
    Since $\Phi$ is not q.i.d., we may pass to a subsequence, which by abuse of notation we also denote by $\Phi=(\Phi_N)_{N\in \N}$, so that 
\begin{equation}
\lim_{N\to \infty} \frac{|\Phi_N/2 \cap \Phi_N|}{|\Phi_N|} =0.    
\end{equation}

From the first isomorphism theorem for groups we have that 
$G/\ker (D) \cong 2G $, and since $2G$ is infinite, we have 
that $[G: \ker (D)]=\infty$. Then from \cref{non_qid_remark} \eqref{non_qid_remark ii}, each coset of $\ker(D)$ in $G$ has zero density 
with respect to $\Phi$. We will inductively construct a strictly increasing 
sequence of natural numbers $(N_k)_{k\in \N}$ and a sequence of sets 
$F=(F_k)_{k\in \N}$ so that if $\Psi_k=\Phi_{N_k}$, then for all 
$k\in \N$, $F_k \subset \Psi_k$, $\frac{|F_k|}{|\Psi_k|}>1-\frac{1}{k}$,
and for $1\leq i, j<k$, $(F_j+x_i) /2 \cap F_k =\emptyset$.

Let $N_1=1$ and take $F_1=\Phi_1$.
Now, assume that for some $k\geq 1$ we have constructed $N_1<N_2<\ldots< N_k$ and 
$F_1, \ldots, F_k$ so that the previous hold. 

Observe that $\bigcup_{i,j=1} ^{k} (F_j + x_i)/2$ is a (possibly empty) 
finite union of cosets of $\ker(D)$, so 
\begin{equation*}
    \diff_{\Phi} \bigg(\bigcup_{i,j=1} ^{k} (F_j + x_i)/2\bigg)=0,
\end{equation*}
and therefore there is $N_{k+1}>N_k$ so that 
\begin{equation}
    \frac{|\Phi_{N_{k+1}}\setminus \bigcup_{i,j=1} ^{k} 
    (F_j + x_i)/2 |}{
    |\Phi_{N_{k+1}}|} >1-\frac{1}{k+1}.
\end{equation}
Taking 
$F_{k+1}=\Phi_{N_{k+1}}\setminus \bigcup_{i,j=1} ^{k} (F_j + x_i)/2$ 
one sees that $F_1, \ldots, F_{k+1}$ have the desired properties. 

Observe that $|F_k | / |\Psi_k|\to 1$ as $k\to \infty$, so from 
\cref{non_qid_remark} \eqref{non_qid_remark i} we have that 
$F$ is indeed a \Folner{} in $G$. From the construction we get that 
$(F_j + x_i) /2 \cap F_k=\emptyset $ whenever $i,j<k$. Finally, using that 
$F_k\subset \Psi_k$, and that
$\frac{|\Psi_k/2 \cap \Psi_k|}{|\Psi_k|}= 
\frac{|\Phi_{N_k}/2 \cap \Phi_{N_k}|}{|\Phi_{N_k}|} \to 0$
and $|\Psi_k|/ |F_k| \to 1$ as $k\to \infty$, we get that 
$\frac{|F_k /2 \cap F_k|}{|F_k|} \to 0$ as $k\to \infty$. This 
concludes the proof of the lemma.
\end{proof}

We are now ready to prove the necessity of the q.i.d. assumption. 

\begin{proposition}\label{quasi-invariant to doubling is necessary general}
Let $G$ be a countable abelian group so that $2G$ is infinite, 
and let $\Phi$ be a \Folner{} sequence in 
$G$ that is not quasi-invariant with respect to
doubling. Then there exists a set $A\subset G$ with 
$\overline{\diff}_{\Phi}(A)=1$ such that $t+B+B \not\subset A$ for any infinite set $B\subset G$ and any element $t\in G$. 
\end{proposition}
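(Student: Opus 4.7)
The approach is to apply \cref{non_qid_lemma} to the given $\Phi$, having fixed an enumeration of $G$ with $x_1 = e_G$. This yields a subsequence $\Psi$ of $\Phi$ and a \Folner{} sequence $F = (F_N)_{N \in \N}$ in $G$ satisfying $F_N \subset \Psi_N$, $|F_N|/|\Psi_N| \to 1$, and $(F_j + x_i)/2 \cap F_N = \emptyset$ whenever $i,j < N$. I will set $A = \bigcup_{N \in \N} F_N$. Since $F_N \subset \Psi_N$ with $|F_N|/|\Psi_N| \to 1$ and $\Psi$ is a subsequence of $\Phi$, we get $\overline{\diff}_\Phi(A) \geq \limsup_N |F_N|/|\Psi_N| = 1$.

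For the sumset avoidance, I will argue by contradiction. Suppose $t + B + B \subset A$ for some $t \in G$ and infinite $B \subset G$. For $b, b' \in B$, write $a_b = t + 2b$ and $c_{bb'} = t + b + b'$, both in $A$; for $g \in A$, let $N(g) = \min\{N : g \in F_N\}$ and let $i(g)$ be its enumeration index. The identity $2c_{bb'} = a_b + a_{b'}$ places $c_{bb'}$ in $(F_{N(a_b)} + x_{i(a_{b'})})/2$, so the lemma's condition forces
\[
  N(c_{bb'}) \leq \max\{N(a_b), i(a_{b'})\} \quad \text{and} \quad N(c_{bb'}) \leq \max\{N(a_{b'}), i(a_b)\}.
\]

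A central consequence is the \emph{coset-finiteness} property: for every $a \in A$, the set $A \cap (a + \ker D)$ is finite. Indeed, for $k \in \ker D$ we have $2(a+k) = 2a = a + x_{i(a)} \in F_{N(a)} + x_{i(a)}$, so if $a+k \in F_N$ then $N \leq \max\{N(a), i(a)\}$, placing $A \cap (a + \ker D)$ inside a finite union of the $F_N$'s. This resolves the case when $2B$ is finite: pigeonhole yields an infinite $B' \subset b_0 + \ker D$, and then $K := B' - b_0 \subset \ker D$ is infinite, so $t + B' + B' = (t + 2b_0) + (K + K)$ is an infinite subset of $A \cap ((t + 2b_0) + \ker D)$ (since $K + K \supset K + k_0$ for any $k_0 \in K$ is infinite), contradicting coset-finiteness.

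The main difficulty will be the case when $2B$ is infinite, so the $a_b$ are all distinct and may lie in distinct cosets of $\ker D$. The plan is to iterate the bounds above along a carefully chosen sequence in $B$: pick $b_1 \in B$; for cofinitely many $b \in B$ the element $c_{b_1 b}$ is not in the finite set $\bigcup_{N \leq \max\{N(a_{b_1}), i(a_{b_1})\}} F_N$, so $N(c_{b_1 b}) > \max\{N(a_{b_1}), i(a_{b_1})\}$, and the displayed bounds force $N(a_b) \geq N(c_{b_1 b})$ and $i(a_b) \geq N(c_{b_1 b})$. Extracting $b_2, b_3, \ldots \in B$ whose indices $N(a_{b_k}), i(a_{b_k})$ grow faster than any prescribed rate, and then examining the pair $c_{b_i b_j}$ for a triple $i < j < k$, the two bounds on $N(c_{b_j b_k})$ become incompatible with the symmetric bounds on $N(c_{b_i b_j})$ and $N(c_{b_i b_k})$. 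The principal technical challenge lies in coordinating the growth of $N(a_{b_k})$, $i(a_{b_k})$, and $N(c_{b_j b_k})$ simultaneously, which will be handled by using both finiteness of each $F_N$ and the infinitude of each family $\{c_{b_j b} : b \in B\}$ to advance the construction one step at a time.
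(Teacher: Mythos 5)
The choice $A = \bigcup_{N\in\N} F_N$ is too coarse, and this is where the proposal breaks down. The lemma's conclusion $(F_j+x_i)/2 \cap F_N = \emptyset$ for $i,j<N$ gives you, for each $c = t+b+b'$ (with $2c = a_b + a_{b'}$, $a_b := t+2b$), the upper bound $N(c) \leq \max\{N(a_b), i(a_{b'})\}$ and its symmetric counterpart. Coupled with the "cofinitely many" lower bound $N(c_{b_1 b}) > M_{b_1}$, one does obtain (after a diagonal extraction) a chain
\[
\max\{N(a_{b_j}), i(a_{b_j})\} \ <\ N(c_{b_j b_k})\ \leq\ \min\{N(a_{b_k}), i(a_{b_k})\} \qquad (j<k),
\]
but these upper and lower bounds are mutually consistent — they simply force the indices of the $a_{b_k}$'s to grow and say nothing contradictory. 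Running the same analysis on a triple $i<j<k$ still only produces compatible two-sided bounds; there is no incompatibility of the kind claimed. In fact I see no way to close this with only the lemma's separation property plus the trivial inclusion $F_N \subset A$: the lemma never forces $c_{bb'}$ out of $F_N/2$, nor out of $2F_{N'}+x_i$ for $N' < N$, so the two cases that the paper handles by \emph{construction} (the cases $k_3 = k_2$ and $k_3 > k_2$ in the paper's case analysis) have no analogue in your setup.

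The paper's proof instead passes to a further subsequence $(F_{N_k})$ and defines
\[
A_{N_k} = \Big(F_{N_k}\cap\bigcap_{i<k}(F_{N_k}+x_i)\Big) \setminus \Big(F_{N_k}/2 \,\cup\, \bigcup_{i,j<k}(2F_{N_j}+x_i)\Big),
\]
with $A = \bigcup_k A_{N_k}$. The extra removals of $F_{N_k}/2$ and of the sets $2F_{N_j}+x_i$, and the intersection with the translates $F_{N_k}+x_i$, are precisely what kill the cases your argument cannot reach: the removal of $F_{N_k}/2$ (using the second limit in the lemma, $|F_N/2\cap F_N|/|F_N|\to 0$, which your proposal never invokes) handles the case where $t+b_1+b_2$ and $t+2b_2$ land in the same $A_{N_k}$, and the removal of $2F_{N_j}+x_i$ handles the case where $t+2b_2$ lands in a later block. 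Only the case where $t+2b_2$ lands in an earlier block is dispatched by the lemma's condition alone, which is the one constraint you do extract.

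Two things in the proposal are genuinely good and correct: the coset-finiteness observation ($A\cap(a+\ker D)$ is finite for each $a\in A$, via $2(a+k)=2a=a+x_{i(a)}$), and the use of it to dispose of the case where $2B$ is finite. But the "main difficulty" paragraph is a plan rather than a proof, and as argued above I do not believe the plan can be completed with $A=\bigcup F_N$; you need the finer inductive construction of $A$, and you need the unused hypothesis $|F_N/2\cap F_N|/|F_N|\to 0$ to guarantee that the refined $A$ still has upper density $1$ along $\Phi$.
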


\begin{proof}
Let us fix an enumeration of $G$ and write $G=\{x_1,x_2,x_3,\ldots\}$. Since
$\Phi$ is not quasi-invariant with respect to doubling, we may use \cref{non_qid_lemma} to find a subsequence $\Psi=(\Psi_N)_{N\in \N}$ of 
$\Phi$ and a \Folner{} sequence $F=(F_N)_{N\in \N}$ in $G$ so that 
for all $N$, $F_N \subset \Psi_N$,
$(F_j +x_i) /2 \cap F_N = \emptyset$ whenever $i,j<N$,
\begin{equation}\label{non_qid_prop_eq_1}
\lim_{N\to \infty} \frac{|F_N|}{|\Psi_N|}=1 \text{ and } 
\lim_{N\to \infty} \frac{|F_N/2 \cap F_N|}{|F_N|}=0.
\end{equation}

We will now choose a subsequence 
$(F_{N_k})_{k\in \N}$ of $(F_N)_{N\in \N}$ as follows. For $k\in \N$, let 
$c(k)$ be a small positive constant to be determined later. Now, given 
$F_{N_1},\ldots,F_{N_{k-1}}$, we choose $N_k > N_{k-1}$  
large enough so that
\begin{enumerate}[(i)]
    \item \label{6.1 2} $|F_{N_k} \cap F_{N_k}/2|<c(k) |F_{N_k}|$ 
    \item \label{6.1 4} $\left| F_{N_k} \cap \bigcup_{i,j=1}^{k-1} 2F_{N_j} + x_i  \right| < c(k) |F_{N_k}|$
    \item \label{6.1 5} $\left|  \bigcup_{i=1}^{k-1}
    F_{N_k} \triangle (F_{N_k} + x_i)\right|< c(k) |F_{N_k}|$.
\end{enumerate}
Let us comment on why such a choice of $N_k$ is possible. 
For \eqref{6.1 2}, it suffices use the second equation in 
\eqref{non_qid_prop_eq_1} and 
\eqref{6.1 4} is possible because 
$\bigcup_{i,j=1}^{k-1} 2F_{N_j} + x_i$ is a finite set, while 
$|F_N|\to \infty$ as $N\to \infty$. Finally, for \eqref{6.1 5}, one simply has 
to use the fact that $F$ is a \Folner{} sequence.
With that in mind, we choose $c(k)$ so that, if 
$$A_{N_k}: = \left( F_{N_k} \cap \bigcap_{i=1}^{k-1} F_{N_k} + x_i \right)\setminus \left( F_{N_k}/2
\cup \bigcup_{i,j=1}^{k-1} 2F_{N_j} + x_i \right),$$
then $|A_{N_k}|>\left(1-\frac{1}{k}\right) |F_{N_k}|$. 
Letting $A=\bigcup_{k\in \N} A_{N_k}$ it follows by the latter
that $\lim_{k\to \infty} \frac{|A\cap F_{N_k}|}{|F_{N_k}|}=1$.
Since $F_{N_k}\subset \Psi_{N_k}$ and $\frac{|F_{N_k}|}{|\Psi_{N_k}|}\to 1$
as $k\to \infty$, the previous implies that $\lim_{k\to \infty} 
\frac{|A\cap \Psi_{N_k}|}{|\Psi_{N_k}|}=1$, and since $\Psi$ is 
a subsequence of $\Phi$, this implies that $\overline{\diff}_{\Phi}(A)=1$. 

It remains to prove that for any infinite $B\subset G$ and any $t\in G$, 
$t+B+B \not \subset A$. Assume that there is some 
$t\in G$ and some infinite $B\subset G$ so that $t+B+B\subset A$. 
Let $b_1 \in B$ so that $t+2b_1 \in A_{N_{k_1}}$ for some $k_1 \in \N$. 
Let $i_1, i_2 \in \N$ so that $x_{i_1}=t+2b_1$, $x_{i_2}=-(t+2b_1)$. Since 
the $A_{N_k}$'s are finite, there is $b_2\in B$ so that $t+b_1 + b_2 
\in A_{N_{k_2}}$ for some $k_2 > \max\{i_1, i_2, k_1\}$. We have that 
$t+2b_2 \in A_{N_{k_3}}$ for some $k_3 \in \N$. 

If $k_3 > k_2$, then from the definition of $A_{N_{k_3}}$ we have that 
$t+2b_2 \notin 2 F_{N_{k_2}} + x_{i_2}$. On the other hand, since 
$t+b_1 + b_2 \in A_{N_{k_2}}\subset F_{N_{k_2}},$ 
we have that $2t + 2b_1 + 2b_2 \in 2 F_{N_{k_2}}$ which implies that 
$t+2b_2 \in 2F_{N_{k_2}} + x_{i_2}$, so we reach a contradiction. 

If $k_3 =k_2$, then we have that $t+b_1 + b_2\notin F_{N_{k_2}}/2$, so 
$2t + 2b_1 + 2b_2 \notin F_{N_{k_2}}$. Since $k_2> i_2$, $t+2b_2 \in 
A_{N_{k_2}}$ implies that $t+2b_2 \in F_{N_{k_2}} + x_{i_2}$, which in turn 
gives that $2t + 2b_1 + 2b_2 \in F_{N_{k_2}}$, so again we reach a 
contradiction. 

Finally, if $k_3 < k_2$, then $t+2b_2\in A_{N_{k_3}}$ implies that 
$t+2b_2\in F_{N_{k_3}}$, so $2t+2b_1+2b_2\in F_{N_{k_3}} + x_{i_1}$, and 
therefore $t+b_1 +b_2\in (F_{N_{k_3}} + x_{i_1})/2$. Since $k_3, i_1 < k_2$,
we have that $F_{N_{k_2}}\cap (F_{N_{k_3}} + x_{i_1})/2=\emptyset$, 
and therefore $t+b_1 + b_2\notin F_{N_{k_2}}$, so also 
$t+b_1 + b_2\notin A_{N_{k_2}}$, which again is a contradiction. Hence
in every case we reach a contradiction, so after all, for any infinite 
$B\subset G$ and any $t\in G$, $t+B+B \not \subset A$.
\end{proof}

We also show that non-q.i.d \Folner{} sequences always exist in countable abelian groups.

\begin{lemma} \label{existence of non qid}
Let $G$ be a countable abelian group. Then, there exist \Folner{} sequences in $G$ which are not quasi-invariant with respect to doubling.
\end{lemma}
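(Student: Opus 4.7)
The plan is to modify an arbitrary \Folner{} sequence in $G$ by a single carefully chosen translation. Since $G$ is countable abelian, it is amenable and admits at least one \Folner{} sequence $\Psi=(\Psi_N)_{N\in\N}$; we also note that $G$ must be (countably) infinite for the conclusion to make sense, as on a finite group every \Folner{} sequence is eventually $G$ itself, which satisfies $\alpha_\Phi=1$. For each $N$, I will define $\Phi_N = t_N + \Psi_N$ for a suitably chosen translate $t_N \in G$. Since translation preserves the relevant quantities, $|\Phi_N \triangle (x+\Phi_N)| = |\Psi_N \triangle (x+\Psi_N)|$ and $|\Phi_N|=|\Psi_N|$, so $\Phi=(\Phi_N)_{N\in\N}$ is automatically a \Folner{} sequence, irrespective of the choice of $t_N$.

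The key observation is that $g\in \Phi_N\cap(\Phi_N/2)$ forces $g = t_N + s$ for some $s\in\Psi_N$, together with $2g = 2t_N+2s \in t_N+\Psi_N$; the latter simplifies to $t_N + 2s \in \Psi_N$. Hence $\Phi_N\cap(\Phi_N/2)=\emptyset$ provided $t_N$ avoids the finite set
\[
\Psi_N - 2\Psi_N := \{u-2s : u,s\in\Psi_N\},
\]
which has cardinality at most $|\Psi_N|^2$. Since $G$ is infinite, such a $t_N$ exists for every $N$, and with this choice $\Phi_N\cap(\Phi_N/2)=\emptyset$ for every $N$. Consequently $\alpha_\Phi = 0$, so $\Phi$ is not quasi-invariant with respect to doubling.

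There is essentially no substantive obstacle beyond this short translation trick: the argument only requires unfolding the definitions of $\Phi_N/2$ and of the doubling map. The single point to verify carefully is that the condition $t_N \notin \Psi_N - 2\Psi_N$ indeed rules out every element of $\Phi_N\cap(\Phi_N/2)$, which is the one-line computation above, and that such a $t_N$ is available, which follows from contrasting the finiteness of $\Psi_N-2\Psi_N$ with the infinitude of $G$.
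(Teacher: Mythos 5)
Your proof is correct and follows essentially the same route as the paper: both start from an arbitrary \Folner{} sequence and translate the $N$-th term by an element chosen outside the finite set $\Psi_N-2\Psi_N$, so that $\Phi_N\cap(\Phi_N/2)=\emptyset$ for every $N$ and hence $\alpha_\Phi=0$. The only cosmetic difference is that you verify directly that translates of a \Folner{} sequence form a \Folner{} sequence, whereas the paper cites its Lemma on shifts of \Folner{} sequences; your added remark about $G$ needing to be infinite is a sensible caveat that the paper leaves implicit.
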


\begin{proof}
Let $\Phi=(\Phi_N)_{N\in \N}$ be a \Folner{} 
sequence in $G$. Then, as each $\Phi_N$ is a finite 
set we can find $g_N\in G$ such that $g_N \notin \Phi_N-2\Phi_N$. Consider the sequence $\Psi=(\Psi_N)_{N\in \N}$ defined via $\Psi_N=g_N+\Phi_N$ for every $N\in \N$. It follows by \cref{lemma aux folner 3} \eqref{shifts of Folner} (see also Remark \ref{remark about shifts of Folner}) that $\Psi$ is a \Folner{} sequence in $G$. Moreover, for any $N\in \N$ we see by the choice of $g_N$ that $\Psi_N/2 \cap \Psi_N = \emptyset$. Indeed, if this wasn't the case we would have $2(\Psi_N/2) \cap (2\Psi_N) \neq \emptyset$ for some $N\in \N$ and this in turn would imply that $(g_N+\Phi_N) \cap (2g_N+2\Phi_N) \neq \emptyset$. But this contradicts the fact that $g_N \notin \Phi_N-2\Phi_N$ and thus we conclude. 
\end{proof}

As an immediate consequence of the construction in \cref{quasi-invariant to doubling is necessary general}, combined with the fact that non-q.i.d. \Folner{} sequences always exist, we deduce the following. 

\begin{corollary} \label{no B+B in full density always}
Let $G$ be a countable abelian group so that $2G$ is infinite. 
Then, there exists a set $A\subset G$ with upper Banach density equal to $1$ and 
such that $t+B+B \not\subset A$ for any infinite set $B\subset G$ and any 
element $t\in G$.    
\end{corollary}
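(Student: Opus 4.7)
The plan is to combine the two preceding results directly. The key observation is that the conclusion of Corollary \ref{no B+B in full density always} is a weakening of the conclusion of Proposition \ref{quasi-invariant to doubling is necessary general}: the latter produces a set whose density is $1$ along a \emph{specific} \Folner{} sequence, while the former only asks for upper Banach density $1$. So provided we can find \emph{some} non-q.i.d. \Folner{} sequence in $G$ to apply Proposition \ref{quasi-invariant to doubling is necessary general} to, we are done.

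First, I would invoke \cref{existence of non qid}, which guarantees that any countable abelian group $G$ admits a \Folner{} sequence $\Phi$ which is not quasi-invariant with respect to doubling. Since by hypothesis $2G$ is infinite, the group $G$ and this $\Phi$ satisfy the assumptions of \cref{quasi-invariant to doubling is necessary general}. Applying that proposition, we obtain a set $A \subset G$ with $\overline{\diff}_\Phi(A) = 1$ such that $t + B + B \not\subset A$ for every infinite $B \subset G$ and every $t \in G$.

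Finally, I would observe that upper Banach density equals the supremum of $\overline{\diff}_\Psi(A)$ over all \Folner{} sequences $\Psi$ in $G$, and is at most $1$. Since $\overline{\diff}_\Phi(A) = 1$ for the specific \Folner{} sequence $\Phi$ above, the upper Banach density of $A$ equals $1$, finishing the proof.

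There is no real obstacle here; the corollary is essentially the conjunction of \cref{quasi-invariant to doubling is necessary general} and \cref{existence of non qid}, packaged in a statement that no longer refers to a specific \Folner{} sequence. The only small point to verify is the passage from ``density $1$ along $\Phi$'' to ``upper Banach density $1$,'' which is immediate from the definition.
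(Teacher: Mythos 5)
Your proof is correct and follows exactly the paper's own argument: apply \cref{existence of non qid} to obtain a non-q.i.d.\ \Folner{} sequence $\Phi$, then apply \cref{quasi-invariant to doubling is necessary general} to get the desired set $A$ with $\overline{\diff}_\Phi(A)=1$, which gives upper Banach density $1$ by definition.
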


\subsection{The case of infinite index $[G:2G]=\infty$ }\label{inf_index}

In \cite{ackelsberg2024counterexamples} it is shown that if 
$[G:2G]=\infty$, then for any $\epsilon>0$ there exists a set with 
upper Banach density at least $1-\epsilon$ that contains no shift of 
an infinite sumset. 
However, it could be that sets of full density 
always contain $t+B+B$. An interesting dichotomy manifests itself in 
this case. Indeed, as we show below, if $2G$ is a finite set (e.g., 
when $G=\F_2 ^{\omega}$), then any set of full upper Banach density 
contains a shifted sumset $t+B+B$. On the other hand, once $2G$ is an 
infinite set (assuming $[G:2G]=\infty$) we construct -- along any 
given \Folner{} sequence -- a set of full density that fails to 
contain such sumsets.

%Let us start by presenting some results about sumsets in abelian groups where $[G:2G]=\infty$. 
We begin with a simple observation.

\begin{lemma}
Let $G$ be an abelian group with $[G:2G]=\infty$ and let 
$\Phi$ be any \Folner{} sequence in $G$. 
Then there is a set $A\subset G$ with ${\diff}_{\Phi}(A)=1$ 
so that for all infinite $B\subset G$, $B+B \not \subset A$. 
\end{lemma}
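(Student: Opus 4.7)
The plan is to take $A := G \setminus 2G$ and verify both required properties from this single choice, which works because the hypothesis $[G:2G]=\infty$ makes $2G$ itself negligible.

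For the density, I would first invoke the hypothesis $[G:2G]=\infty$ together with \cref{non_qid_remark}~\eqref{non_qid_remark ii} (applied with $H=2G$) to conclude that $\overline{\diff}_\Phi(2G)=0$. The complementary identity $\underline{\diff}_\Phi(A)=1-\overline{\diff}_\Phi(G\setminus A)$, which follows from $|A\cap\Phi_N|+|(G\setminus A)\cap\Phi_N|=|\Phi_N|$ by taking liminf, then gives $\underline{\diff}_\Phi(A)=1$. Since the upper density is trivially at most $1$, the density exists and $\diff_\Phi(A)=1$.

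For the sumset-avoidance, observe that by construction $A\cap 2G=\emptyset$. Given any non-empty $B\subset G$, picking any $b\in B$ yields $2b=b+b\in B+B$; but $2b\in 2G$, so $2b\notin A$, whence $B+B\not\subset A$. This holds for every non-empty $B$, and in particular for every infinite $B$, which is stronger than what is required.

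There is no real obstacle here: the argument reduces to two standard observations, namely that subgroups of infinite index have zero upper density along every \Folner{} sequence, and that the diagonal $\{2b:b\in B\}\subset B+B$ automatically lies in $2G$, so excising $2G$ from $G$ destroys all candidate sumsets at once while losing no mass.
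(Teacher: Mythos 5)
Your proposal is correct and takes essentially the same approach as the paper: take $A=G\setminus 2G$, use \cref{non_qid_remark}~\eqref{non_qid_remark ii} to get $\diff_\Phi(2G)=0$ hence $\diff_\Phi(A)=1$, and note that $2B\subset 2G$ is disjoint from $A$ so no nonempty $B$ can have $B+B\subset A$. The only difference is that you spell out the complementarity of upper and lower densities, which the paper leaves implicit.
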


\begin{proof}
From \cref{non_qid_remark} \eqref{non_qid_remark ii} we have that 
$\diff_{\Phi}(2G)=0$. Let $A=G\setminus 2G$. Then $\diff_{\Phi}(A)=1$
and $A\cap 2G=\emptyset$, so for each nonempty $B\subset G$, 
$2B\cap A=\emptyset$. This concludes the proof. 
\end{proof}

Allowing for the possibility of shifted sumsets 
makes the situation more delicate. 

\begin{proposition} \label{t+B+B in finite 2G}
Let $G$ be a countable abelian group so that $2G$ is finite. If $A\subset G$ has upper Banach density $1$, then for every $t\in A$ there is some infinite $B\subset A$ so that $t+B+B\subset A$.
\end{proposition}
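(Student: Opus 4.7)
The strategy exploits two algebraic consequences of $2G$ being finite. First, by the first isomorphism theorem $G/\ker(D)\cong 2G$, so $K:=\ker(D)$ has finite index $m:=|2G|$ in $G$. Second, every element of $K$ satisfies $2k=0$, so $K$ is an $\F_2$-vector space: in particular $k_1+k_2\in K$ for all $k_1,k_2\in K$, and doubling inside $K$ is trivial. The plan is to build $B$ inside $A\cap K$ by a greedy construction, using density $1$ to keep producing fresh candidates.

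First I would descend to a F\o{}lner sequence in $K$. Since $A$ has upper Banach density $1$, we may fix a F\o{}lner $\Phi$ in $G$ and pass to a subsequence so that $|A\cap\Phi_N|/|\Phi_N|\to 1$. A standard consequence of the F\o{}lner property is that every coset of the finite-index subgroup $K$ has asymptotic $\Phi$-density $1/m$, so $|\Phi_N\cap K|/|\Phi_N|\to 1/m$. Setting $\Psi_N:=\Phi_N\cap K$ yields a F\o{}lner sequence in $K$, since for $k\in K$ one has $k+\Psi_N=(k+\Phi_N)\cap K$ and the F\o{}lner property transfers after dividing by $|\Psi_N|\sim|\Phi_N|/m$. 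Along $\Psi$, both
\[ A_1:=A\cap K \quad\text{and}\quad A_2:=(A-t)\cap K \]
have density $1$ in $K$, because each has asymptotic $G$-density $1/m$. Note also that $0\in A_2$ because $t\in A$.

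Next I would run the greedy scheme in $K$. Pick $b_1\in A_1$ arbitrarily; having chosen $b_1,\dots,b_n\in A_1$ with $b_i+b_j\in A_2$ for all $i,j\leq n$, consider
\[ A_1\cap\bigcap_{i=1}^n(A_2-b_i). \]
Its complement in $K$ is a finite union of sets of $\Psi$-density zero (translation invariance of $\Psi$-density applies because each $b_i\in K$), so the intersection itself has density $1$ in $K$ and is in particular infinite; pick $b_{n+1}$ in it, distinct from $b_1,\dots,b_n$. The infinite set $B:=\{b_n\}_{n\geq 1}$ sits inside $A_1\subset A$, and for all $i,j$ we have $b_i+b_j\in A_2\subset A-t$, using $b_i+b_i=0\in A_2$ when $i=j$; translating by $t$ gives $t+B+B\subset A$.

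The main subtle point is the descent in the first step: verifying that $\Phi_N\cap K$ is genuinely F\o{}lner in $K$ and that $A_1,A_2$ retain density $1$ along it. Once that is in hand the rest is essentially formal, because the exponent-$2$ structure of $K$ eliminates the usual doubling obstruction — the identity $b+b=0\in A_2$ holds automatically and costs nothing in the greedy step.
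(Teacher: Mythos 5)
Your proof is correct and follows essentially the same strategy as the paper: work inside the finite-index subgroup $\ker(D)$, where doubling is trivial, and run a greedy density-$1$ intersection argument to build the sequence $(b_n)$. The only cosmetic difference is that you explicitly descend to a F\o{}lner sequence $\Psi_N = \Phi_N\cap\ker(D)$ in $K$, while the paper stays in $G$ and invokes a cited lemma giving $\operatorname{d}_\Psi(\ker(D))=1/s>0$; the inductive step and the observation that $2b=0$ makes the diagonal term cost nothing are identical.
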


\begin{proof}
Let $\Psi=(\Psi_N)_{N\in \N}$ be a \Folner{} sequence along which the 
density of $A$ is equal to $1$, i.e.
$$\lim_{N\to \infty} \frac{|A\cap\Psi_N|}{|\Psi_N|}=1.$$
From the first isomorphism theorem for groups we have that 
$G/ \ker(D)\cong 2G$, which implies that $s:=[G: \ker(D)]<\infty.$ Then, using \cite[Lemma 5.4]{charamaras_mountakis2024} we have that 
$\diff_{\Psi}(\ker(D))=\frac{1}{s}>0,$ where the previous density exists. 

Let $t\in A$. Then $\diff_{\Psi}(A)=1$, so from \cref{non_qid_remark} \eqref{non_qid_remark iii} 
we have that $\diff_{\Psi}(A\cap \ker(D))=\frac{1}{s}.$
Pick $b_1 \in A\cap \ker(D)$. Since $b_1 \in \ker(D)$, we have 
$t+2b_1 = t \in A$.

Since $\diff_{\Psi}(A-t-b_1)=1$, we 
have that $\diff_{\Psi}(A\cap (A-t-b_1) \cap ker(D))=\frac{1}{s},$
so we may pick $b_2 \in A\cap (A-t-b_1) \cap ker(D)$, with $b_2 \neq 
b_1$. Again, 
since $b_2 \in \ker(D)$, we have $t+2b_2 = t \in A$, and since 
$b_2 \in A-t-b_1$ we have $t+b_1 +b_2 \in A$. 

Next, since $\diff_{\Psi}(A-t-b_2)=1$, we have 
$\diff_{\Psi}(A\cap (A-t-b_1) \cap (A-t-b_2)\cap ker(D))=\frac{1}{s},$
so we may pick $b_3 \in A\cap (A-t-b_1) \cap (A-t-b_2)\cap ker(D)$, with 
$b_3 \neq b_1, b_2$. 
Again, since $b_3 \in \ker(D)$, we have $t+2b_3 = t \in A$, and since 
$b_3 \in (A-t-b_1) \cap (A-t-b_2)$ we have $t+b_1 +b_2, t+b_1 +b_3 \in A$. 

Continuing inductively, we end up finding a sequence of different elements 
$(b_j)_{j\in \N}\subset A$ so that for all $i,j \in \N$ with $i\neq j$ we 
have $t+b_i + b_j \in A$ and $t+2b_i =t \in A$. Hence for the infinite
set $B=\{b_j: j\in \N\}\subset A$ we have $t+B+B\subset A$.
\end{proof}

\begin{remark*}
Recall that a set $A\subset G$ is thick if for any finite set 
$F\subset G$ there is some $t\in G$ such that $t+F\subset A$. It is 
not difficult to see that this happens if and only if for any finite 
set $F\subset G$ there is some $t\in A$ such that $t+F\subset A$. 
Now, it is an easy exercise to verify that sets of 
upper Banach density $1$ are thick. Also, it is well-known (see for 
example \cite[Lemma $4.5$]{Berg-Hindman}) that thick sets are $IP$-
sets. That is, if $A\subset G$ is thick, there exists an infinite set 
$B\subset G$ such that $FS(B):=\big\{\sum_{b\in H}b: H\subset B,\ 
\text{$H$ is finite} \big\} \subset A$. 

Therefore, in the setting of \cref{t+B+B in finite 2G}, the set $A$ is thick and so there exists $t\in A$ so that $t+2G \subset A$. Moreover, $A-t$ is also thick and thus we may find $B\subset A-t$ infinite with $FS(B)+t \subset A$. In particular, since $2B\subset 2G \subset A-t$, we actually have that $(t+B+B) \cup (t+FS(B)) \subset A$.  
\end{remark*}

We conclude with the following strengthening of \cref{quasi-invariant to doubling is necessary general} in the case that $[G:2G]=\infty$. 

\begin{proposition}
    Let $G$ be an abelian group so that $2G$ is infinite and $[G:2G]=\infty$,
    and let $\Phi=(\Phi_N)_{N\in \N}$ be any
    \Folner{} sequence in $G$. Then there is a set 
    $A\subset G$ so that $\overline{\diff}_{\Phi}(A)=1$ and for all 
    infinite $B\subset G$ and $t\in G$, 
    $t+B+B \not \subset A$. 
\end{proposition}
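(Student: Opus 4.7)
The plan is to construct $A$ directly by inductively excluding, from a carefully chosen subsequence of $\Phi$, those elements that lie in finitely many ``dangerous'' cosets. The key observation is that both $2G$ and $\ker D$ are subgroups of $G$ of infinite index: $[G:2G]=\infty$ by hypothesis, and $[G:\ker D]=|2G|=\infty$ by the first isomorphism theorem combined with the assumption $|2G|=\infty$. Consequently, by \cref{non_qid_remark} \eqref{non_qid_remark ii}, cosets of either subgroup have $\Phi$-density zero, so any finite union of such cosets also has density zero.

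Fix an enumeration $G = \{x_1, x_2, \ldots\}$. I will construct, by induction on $k$, a strictly increasing sequence $(N_k)_{k\in\N}$ and set
\[
A_k := \Phi_{N_k} \setminus \Big( \bigcup_{i \leq k}(x_i + 2G) \cup \bigcup_{j \leq k}(x_j + \ker D) \Big),
\]
choosing $N_k$ large enough that $|A_k|/|\Phi_{N_k}| > 1 - 1/k$. This is feasible precisely because the excluded set is a finite union of density-zero cosets, whose intersection with $\Phi_N$ can be made an arbitrarily small fraction of $|\Phi_N|$ for sufficiently large $N$. Setting $A = \bigcup_{k\in\N} A_k$, we immediately get $\overline{\diff}_\Phi(A) \geq \lim_k (1 - 1/k) = 1$.

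For the sumset-avoidance part, suppose toward a contradiction that $t + B + B \subset A$ for some $t \in G$ and infinite $B \subset G$, and write $t = x_{i_0}$. By the first exclusion, $A_k \cap (t + 2G) = \emptyset$ for every $k \geq i_0$, so $A \cap (t + 2G) \subset \bigcup_{k < i_0} \Phi_{N_k}$ is finite. Since $t + 2b \in A \cap (t + 2G)$ for every $b \in B$, the set $2B$ is finite, and the pigeonhole principle forces $B$ to meet some coset $b_0 + \ker D$ in an infinite set; replacing $B$ by that intersection, I may assume $B \subset b_0 + \ker D$, whence $t + B + B \subset t + 2b_0 + \ker D$. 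Writing $t + 2b_0 = x_{j_0}$, the second exclusion yields that $A \cap (t + 2b_0 + \ker D)$ is also finite. However, in any abelian group $|B + B| \geq |b_0 + B| = |B| = \infty$, so $t + B + B$ is infinite -- a contradiction.

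The main conceptual step is recognizing that the two hypotheses $[G:2G]=\infty$ and $|2G|=\infty$ jointly force both $2G$ and $\ker D$ to be density-zero subgroups, which is exactly what enables the simultaneous double exclusion in the induction. Without the second hypothesis (the case $|2G|<\infty$, treated in \cref{t+B+B in finite 2G}), the pigeonhole reduction to a single coset of $\ker D$ would collapse, and indeed the conclusion itself fails there. Apart from this structural observation, no deep technical obstacle arises: the construction is a routine density-zero induction once the correct pair of cosets to exclude has been identified.
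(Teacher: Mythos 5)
Your proof is correct, and it follows essentially the same strategy as the paper's: construct $A$ as a union of pieces $A_k\subset\Phi_{N_k}$ of a rapidly-growing subsequence of $\Phi$, with each piece pruned by removing finitely many density-zero cosets of $2G$ and of $\ker D$, so that in the end $A$ meets every single coset of $2G$ and of $\ker D$ in a finite set; then derive a contradiction from $t+B+B\subset A$. There are two small cosmetic differences. First, you index the excluded cosets by an enumeration $x_1,x_2,\ldots$ of $G$, whereas the paper excludes the cosets passing through the finitely many elements already placed in $A_1\cup\cdots\cup A_{k-1}$; both versions deliver the same finite-intersection property and the same density estimate, and yours avoids the recursive definition of the set $D_k$. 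Second, in the sumset-avoidance step you chain the two finiteness facts in sequence (first $A\cap(t+2G)$ finite forces $2B$ finite, then pigeonhole into a single $\ker D$-coset $b_0+\ker D$, then $A\cap(t+2b_0+\ker D)$ finite yields the contradiction via $|B+B|\geq|b_0+B|=\infty$), whereas the paper runs a case split on whether some $\ker D$-coset meets $B$ infinitely. Your chaining is slightly more streamlined, but the two arguments are the same in substance.
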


\begin{proof}
From the first isomorphism theorem for groups we have that $G/\ker(D)\cong 2G$,
so in particular $[G:\ker(D)]=\infty$, and therefore from 
\cref{non_qid_remark} \eqref{non_qid_remark ii} we infer that 
$\diff_{\Phi}(g + \ker(D))=0$ for all $g\in G$.
Also, since $[G:2G]=\infty$, again from \cref{non_qid_remark} 
\eqref{non_qid_remark ii} we have $\diff_{\Phi}(g+2G)=0$ for all $g\in G$.

We inductively construct
a strictly increasing sequence $(N_k)_{k\in \N}$ of natural numbers and a 
sequence of pairwise disjoint finite sets $A_k\subset \Phi_{N_k}$, $k\in \N$, 
as follows. 
Take $N_1=1$ and $A_1=\Phi_1$.
Now, for $k\geq 2$, given $N_1, \ldots, N_{k-1}$ and $A_1, \ldots, A_{k-1}$,
let $D_k=\bigcup_{i=1} ^{k-1} A_i$, which is finite. Then $E_{1,k}=D_k + 2G$ is
a finite union of cosets of $2G$, so $\diff_{\Phi}(E_{1,k})=0$, 
and $E_{2,k}=D_k + \ker(D)$ is a finite union of cosets of $\ker(D)$,
so also $\diff_{\Phi}(E_{2,k})=0$. Hence, letting $E_k=E_{1,k}\cup E_{2,k}$
we have $\diff_{\Phi}(E_k)=0$, so there is $N_k > N_{k-1}$ with
\begin{equation}\label{non_qid_eq_sec_1}
    \frac{|\Phi_{N_k}\setminus E_k|}{|\Phi_{N_k}|}>1-\frac{1}{k}.
\end{equation}
Take $A_k =  \Phi_{N_k} \setminus E_k$, and let
$A=\bigcup_{k=1}^{\infty} A_k$. Then from \eqref{non_qid_eq_sec_1}
we have that 
$\lim_{k\to \infty} \frac{|A\cap \Phi_{N_k}|}{|\Phi_{N_k}|}=1$, so in 
particular $\overline{\diff}_{\Phi}(A)=1$.
Also, since each $\Phi_N$ is finite, from the construction 
one can see that for all $g\in G$, the sets $A\cap (g+2G)$ and 
$A\cap(g+ \ker(D))$ are finite. 

Now, assume that there is some $t\in G$ and some infinite $B\subset G$ so that 
$t+B+B \subset A$. Suppose that some coset of $\ker(D)$ contains infinitely
many elements of $B$, i.e. there is $g_0 \in G$ so that 
$B'=B\cap (g_0 + \ker(D))$ is infinite. It follows that $t+B' +B' \subset A \cap (t+ 2g_0 + \ker(D))$, and therefore $A\cap (t+ 2g_0 + \ker(D))$
is infinite, contradicting the fact that $A$ has finite intersection with 
every coset of $\ker(D)$. 

Thus, for every $g \in G$, $B\cap (g + \ker(D))$ is finite, and 
since $B$ is infinite, there is an infinite $B' \subset B$ so that 
every two elements of $B'$ belong to different cosets of $\ker(D)$. In 
particular for $b_1, b_2 \in B'$ with $b_1 \neq b_2$, we have 
$2b_1 \neq 2b_2$, so the set $\{t+2b' : b' \in B'\}$ is infinite. We have 
$\{t+2b' : b' \in B'\}\subset A$ and $\{t+2b' : b' \in B'\}\subset t+2G$, 
so $A\cap (t+ 2G)$
is infinite, contradicting the fact that $A$ has finite intersection with 
every coset of $2G$. Hence after all, if $t\in G$ and $B\subset G$ is 
infinite, then $t+B+B \not \subset A$, which concludes the proof. 
\end{proof}

\section{Existence of \Folner{} sequences that are quasi-invariant with respect to doubling}
\label{section_existence}
The goal of this section is to prove that any abelian group $G$ with $[G:2G]<\infty$ and $|\ker(D)|<\infty$ admits a q.i.d. \Folner{} sequence. In fact, we establish the following even stronger result. Recall that 
$\alpha_G=\sup\{\alpha_{\Phi} \colon \Phi \in \mathcal{F}_G\}$, 
where $\mathcal{F}_G$ denotes the collection of all \Folner{} sequences
in $G$.

\begin{theorem}\label{theorem bound for quasi-invariant}
    If $G$ is a countable abelian group with $\ell = [G:2G]<\infty$ 
    and $r = |\ker(D)|<\infty$, then the following hold:
    \begin{enumerate}
        \item[\namedlabel{item bound for}{(a)}] $\alpha_G= \min\{1,\frac{r}{\ell}\}$. In particular, 
        $G$ admits \Folner{} sequence that is quasi-invariant with 
        respect to doubling.
        \item[\namedlabel{sup_is_attained}{(b)}] There exists a \Folner{} sequence $\Phi$ in $G$ such that $\alpha_\Phi = \alpha_G$.
    \end{enumerate}
\end{theorem}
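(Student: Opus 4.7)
The plan is to prove the upper bound $\alpha_G \leq \min\{1, r/\ell\}$ uniformly in $\Phi$, and then to construct an explicit F\o{}lner sequence attaining this bound; this construction will simultaneously give the matching lower bound in \ref{item bound for} and the attainment statement \ref{sup_is_attained}.

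For the upper bound, I would observe that $\Phi_N/2 = D^{-1}(\Phi_N \cap 2G)$ and that $D$ restricts to an $r$-to-$1$ surjection onto $2G$, so $|\Phi_N/2| = r\,|\Phi_N \cap 2G|$. The standard density of a finite-index subgroup along any F\o{}lner sequence (cf.\ \cite[Lemma~5.4]{charamaras_mountakis2024}) gives $|\Phi_N \cap 2G|/|\Phi_N| \to 1/\ell$, hence $|\Phi_N/2|/|\Phi_N| \to r/\ell$. The elementary inequality $|\Phi_N/2 \cap \Phi_N| \leq \min(|\Phi_N/2|, |\Phi_N|)$ then yields $\alpha_\Phi \leq \min\{1, r/\ell\}$ for every F\o{}lner sequence $\Phi$, and taking suprema gives the upper bound on $\alpha_G$.

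For the construction, fix any F\o{}lner sequence $(\Psi_N)$ in $G$, pick $K_N \to \infty$ sufficiently slowly, and define
\[
\Phi_N = \bigcup_{k=0}^{K_N} D^{-k}(\Psi_N) \text{ if } r \leq \ell, \qquad \Phi_N = \bigcup_{k=0}^{K_N} D^{k}(\Psi_N) \text{ if } r \geq \ell.
\]
In the first case, $\Phi_N/2 = \bigcup_{k=1}^{K_N+1} D^{-k}(\Psi_N)$, so $\Phi_N/2 \setminus \Phi_N \subset D^{-(K_N+1)}(\Psi_N)$. The iterated identity $|D^{-k}(\Psi_N)|/|\Psi_N| \to (r/\ell)^k$, proved inductively using that each $D^{-(k-1)}(\Psi_N)$ is itself F\o{}lner (via the shift identity $g + D^{-k}(\Psi_N) = D^{-k}(\Psi_N + 2^k g)$ together with the upper-bound calculation applied to this auxiliary F\o{}lner), then shows $|\Phi_N/2 \setminus \Phi_N|/|\Phi_N| \to 0$; combined with $|\Phi_N/2|/|\Phi_N| \to r/\ell$, this yields $\alpha_\Phi = r/\ell$. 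In the second case, one argues symmetrically: the images $D^k(\Psi_N)$ replace the preimages, $\Phi_N \setminus \Phi_N/2 \subset D^{K_N}(\Psi_N)$, and iterated size estimates show this is $o(|\Phi_N|)$, giving $\alpha_\Phi = 1$. The F\o{}lner property of $\Phi_N$ is inherited from that of $\Psi_N$ provided $(\Psi_N)$ is chosen to be F\o{}lner with respect to the doubling-closure $\bigcup_{k \leq K_N}(2^{\pm k} F)$ of any finite test set $F$.

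The main obstacle is the careful size accounting: the sets $D^{\pm k}(\Psi_N)$ overlap substantially rather than being disjoint, so computing $|\Phi_N|$, $|\Phi_N/2|$, and $|\Phi_N/2 \cap \Phi_N|$ with the precision needed to match the bound $\min\{1, r/\ell\}$ requires stratifying elements by their reduced $2$-adic form, analogous to the explicit computations underlying the examples of Section~\ref{section_examples}. Once this bookkeeping is completed and the F\o{}lner-ness of the iterated unions is verified, both parts \ref{item bound for} and \ref{sup_is_attained} follow from the single construction.
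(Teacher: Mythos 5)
Your upper bound $\alpha_G\leq\min\{1,r/\ell\}$ is correct and matches the paper's (via $|\Phi_N/2|/|\Phi_N|\to r/\ell$ and $\alpha_\Phi\leq 1$). Your construction also works in the two strict cases $r<\ell$ and $r>\ell$, essentially because $|D^{-k}(\Psi_N)|$ decays like $(r/\ell)^k$ when $r<\ell$ (resp.\ $|D^k(\Psi_N)|$ decays like $r^{-k}$ when $r>\ell$), so the single "boundary layer'' $D^{-(K_N+1)}(\Psi_N)$ (resp.\ $D^{K_N}(\Psi_N)$) is negligible. Interestingly you pick the opposite operation from the paper in each regime (they use coset-translated forward images when $\ell>r$ and preimages when $\ell\leq r$), but both choices can be made to work there.

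The genuine gap is the case $\ell=r$, which your argument does not handle. When $\ell=r$, both $|D^k(\Psi_N)|$ and $|D^{-k}(\Psi_N)|$ are asymptotically equal to $|\Psi_N|$; the boundary layer you discard is of the same order as $|\Psi_N|$, and $|\Phi_N|=\bigl|\bigcup_{k\leq K_N}D^{\mp k}(\Psi_N)\bigr|$ can be anywhere between $|\Psi_N|$ and $(K_N+1)|\Psi_N|$ depending on how much the iterates overlap — so the ratio you need to bound is not controlled by a simple symmetric-difference estimate. (For $\ell=r=1$, where $D$ is a bijection, this is unavoidable: all $D^{\pm k}(\Psi_N)$ have the same cardinality exactly.) The paper resolves this with a separate contradiction argument: assuming $\sup_k\alpha_{F^{(k)}}=\alpha<1$, it shows each ``new layer'' $\Psi_N/2^{j+1}\setminus\bigcup_{m\leq j}\Psi_N/2^m$ has density at least $1-\alpha$ relative to $|\Psi_N|$, forcing $|F^{(k)}_N|\gtrsim k(1-\alpha)|\Psi_N|$ and hence $\alpha_{F^{(k)}}\to 1$, a contradiction. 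That dichotomy — either the iterates stack up (and $|\Phi_N|$ grows linearly in $K_N$) or they overlap (and the boundary is already absorbed) — is the missing idea; your paragraph about ``stratifying by reduced $2$-adic form'' gestures at it but does not supply the argument. Two secondary concerns you also leave unaddressed: when $\ell>1$, the sets $D^k(\Psi_N)$ lie inside the proper coset subgroup $2^kG$ and so are not F\o{}lner in $G$, which makes both the claimed iterated size estimate $|D^k(\Psi_N)|/|\Psi_N|\to r^{-k}$ and the F\o{}lner-ness of the union delicate (the paper sidesteps this by using the coset-translated sets $H^{(j)}_N$ of Lemma~\ref{final_useful_lemma} in that regime); and running $K_N\to\infty$ ``sufficiently slowly'' to get a single F\o{}lner sequence requires the kind of diagonal argument the paper carries out separately for part~\ref{sup_is_attained}.
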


We start by proving \ref{sup_is_attained} of \cref{theorem bound for quasi-invariant}.

\begin{proof}[Proof of \ref{sup_is_attained} in \cref{theorem bound for quasi-invariant}]

If $\alpha_G=0$, then for all $\Psi \in \mathcal{F}_G$, $\alpha_{\Psi}=0$,
and the result follows immediately. 

On the other hand, if $\alpha_G>0$, then there is a sequence 
$\alpha_k\to \alpha_G$
and a sequence $(\Psi^{(k)})_{k\in \N}=\big(\big(\Psi^{(k)}_N\big)_{N\in \N}\big)_{k\in \N}$ of \Folner{}
sequences in $G$ so that for all $k$, $\alpha_k=\alpha_{\Psi^{(k)}}$. 
Let $G=\{x_1, x_2, x_3,\ldots\}$ be an enumeration of the elements of $G$. Then for each 
$k\in \N$ there is $N_k \in \N$ so that for all $N\geq N_k$ we have 
\begin{equation}\label{sup_eq_1}
\frac{\big| \Psi^{(k)}_N \triangle \big( x_s+ \Psi^{(k)}_N \big) \big|}{\big| \Psi^{(k)}_N \big|}< \frac{1}{k},\ \text{for all}\ s\in \{1,\ldots,k\}, 
\:\text{ and } \:
\frac{\big|\Psi^{(k)}_N \cap (\Psi^{(k)}_N/2)\big|}{\big|\Psi^{(k)}_N\big|}
> \alpha_k - \frac{1}{k}.
\end{equation}

Take a strictly increasing sequence $(N_k)_{k\in \N}$ so 
that for all $k\in \N$, \eqref{sup_eq_1} holds for $N_k$ and for each $k$, let 
$\Phi_k=\Psi^{(k)}_{N_k}$. For $x=x_{n_0} \in G$ and $k\geq n_0$ we have 
$\frac{|\Phi_k \triangle (x+\Phi_k)|}{|\Phi_k|}<\frac{1}{k}\xrightarrow{k\to \infty} 0$, so $\Phi=(\Phi_k)_{k\in \N}$ is indeed a \Folner{} in $G$.
Also, $\frac{\left|\Phi_k \cap (\Phi_k/2)\right|}{|\Phi_k|}> 
\alpha_k - \frac{1}{k} \xrightarrow{k\to \infty} \alpha_{G}$, so $\alpha_{\Phi}
\geq \alpha_G$. By definition, we also have that 
$\alpha_{\Phi} \leq \alpha_G$, so after all, $\alpha_{\Phi} = \alpha_G$. This
concludes the proof of the lemma.
\end{proof}

Now we move to the proof of \ref{item bound for} of
\cref{theorem bound for quasi-invariant}. For 
that we need the following lemma.

\begin{lemma}\label{final_useful_lemma}
Let $G$ be a countable abelian group with $\ell=[G : 2G] < \infty$ and 
$r=|\ker(D)|<\infty$. Let $g_1, \ldots, g_{\ell}\in G$ so that $g_1 =e_G$
and $G=\bigsqcup_{i=1}^{\ell} g_i + 2G$. If $\Psi=(\Psi_N)_{N\in \N}$   
is any \Folner{} sequence in $G$, and we let $F_N= \bigsqcup_{i=1}^{\ell}
(g_i+2\Psi_N)$, $N\in \N$, then $F=(F_N)_{N\in \N}$ is also a \Folner{} sequence in $G$.
\end{lemma}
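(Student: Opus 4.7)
The plan is to unfold $g+F_N$ using the coset structure of $G/2G$, so that each translate $g+g_i+2\Psi_N$ returns to the list of cosets $\{g_j+2G\}_{j=1}^{\ell}$ after a suitable permutation, and then reduce the problem to the F\o lner property of $\Psi$ itself. Concretely, fix $g\in G$ and write $g=g_j+2h$ for some $j\in\{1,\dots,\ell\}$ and $h\in G$. For each $i\in\{1,\dots,\ell\}$, pick $h_i\in G$ with $g_i+g_j=g_{\sigma(i)}+2h_i$, where $\sigma=\sigma_j$ is the permutation of $\{1,\dots,\ell\}$ induced by adding $g_j$ on $G/2G$. Then
\[
    g+g_i+2\Psi_N \;=\; g_{\sigma(i)}+2(h+h_i+\Psi_N).
\]
Since the cosets $g_1+2G,\dots,g_\ell+2G$ are pairwise disjoint, this gives
\[
    g+F_N \;=\; \bigsqcup_{i=1}^\ell \bigl(g_i+2(h+h_{\sigma^{-1}(i)}+\Psi_N)\bigr),
\]
and hence
\[
    \bigl|(g+F_N)\triangle F_N\bigr| \;=\; \sum_{i=1}^\ell \bigl|\,2(s_i+\Psi_N)\triangle 2\Psi_N\,\bigr|,
\]
where $s_i=h+h_{\sigma^{-1}(i)}\in G$ depends on $g$ and $i$ but not on $N$.

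The first key step is the elementary bound $\bigl|2A\triangle 2B\bigr|\le|A\triangle B|$ for any finite sets $A,B\subset G$: any $y\in D(A)\setminus D(B)$ has the form $y=2a$ for some $a\in A\setminus B$, so $D(A)\triangle D(B)\subseteq D(A\triangle B)$, and $D$ is at most $r$-to-$1$ but in particular never increases cardinality. Applying this with $A=s_i+\Psi_N$ and $B=\Psi_N$, each summand is controlled by $|(s_i+\Psi_N)\triangle\Psi_N|$, which is $\oh_{N\to\infty}(|\Psi_N|)$ by the F\o lner property of $\Psi$.

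The second key step is a lower bound for $|F_N|$ in terms of $|\Psi_N|$. Since $\ker(D)=K$ has size $r$ and $D^{-1}(D(\Psi_N))=\Psi_N+K$, we have $r\,|2\Psi_N|=|\Psi_N+K|\ge|\Psi_N|$, so $|F_N|=\ell\,|2\Psi_N|\ge(\ell/r)\,|\Psi_N|$. (In fact the F\o lner property of $\Psi$ forces $|\Psi_N+K|/|\Psi_N|\to 1$, giving the sharp asymptotic $|F_N|/|\Psi_N|\to\ell/r$, but we only need boundedness from below.) Combining the two steps,
\[
    \frac{|(g+F_N)\triangle F_N|}{|F_N|} \;\le\; \frac{r}{\ell}\sum_{i=1}^{\ell} \frac{|(s_i+\Psi_N)\triangle\Psi_N|}{|\Psi_N|} \;\xrightarrow[N\to\infty]{}\; 0,
\]
which establishes that $F$ is a F\o lner sequence in $G$.

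There is no serious obstacle here beyond bookkeeping: the only substantive facts used are the permutation action of $G/2G$ on the chosen coset representatives, the inequality $|2A\triangle 2B|\le|A\triangle B|$, and the finiteness of $\ker(D)$ (which makes $\Psi_N+K$ asymptotically the same size as $\Psi_N$). The mildly delicate point is to be careful that the auxiliary translates $s_i$ depend on $g$ but are independent of $N$, so that the F\o lner property of $\Psi$ can indeed be invoked term by term.
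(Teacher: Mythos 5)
Your proposal is correct and follows essentially the same route as the paper: both proofs identify the permutation of cosets $g_i+2G$ induced by adding $g$, reduce the F\o lner estimate for $F_N$ to a cosetwise comparison of $2(s_i+\Psi_N)$ with $2\Psi_N$, and close the argument using (i) the fact that the doubling map does not increase cardinalities (your $D(A)\triangle D(B)\subseteq D(A\triangle B)$ versus the paper's $D(A\cap B)\subseteq D(A)\cap D(B)$) together with (ii) the $r$-to-$1$ bound $r\,|2\Psi_N|\geq|\Psi_N|$. The only cosmetic difference is that you estimate the symmetric difference $|(g+F_N)\triangle F_N|$ directly, while the paper bounds the intersection $|(g+F_N)\cap F_N|$ from below.
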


\begin{proof}
Let $g\in G$ and $\varepsilon>0$.
Then there exist $1\leq i_0\leq \ell$ and $h\in G$ such that $g=g_{i_0}+2h$. Then 
\begin{equation}\label{folner eq1}
    (g+F_N)\cap F_N
    = \bigg(\bigsqcup_{i=1}^{\ell} (g_{i_0}+g_i+2(h+\Psi_N))\bigg)
    \cap\bigg(\bigsqcup_{i=1}^{\ell} (g_j+2\Psi_N)\bigg).
\end{equation}
Since the cosets $g_j+2G$ are disjoint, it follows 
that for each $1\leq i\leq \ell$,
there exists a unique $1\leq j(i)\leq \ell$ such that
$g_{i_0} + g_i \in g_{j(i)} + 2G$, so there is
$y_i \in G$ such that $g_{i_0} + g_i = g_{j(i)} + 2 y_i$.
In addition, if we assume that for $i_1 \neq i_2$ we have 
$j(i_1) = j(i_2)$, then we have that $g_{i_1}- g_{i_2} = 
2y_{i_1} - 2y_{i_2} \in 2G$, so $g_{i_1} + 2G = g_{i_2} + 2G$, 
which is a contradiction. Therefore, the map
$j: \{1, \dots, \ell \} \to \{1, \dots, \ell \}, 
i \mapsto j(i)$ is a bijection,
and then \eqref{folner eq1} becomes
\begin{equation*}
    (g+F_N)\cap F_N
    = \bigsqcup_{i=1}^{\ell}\big((g_{j(i)}+2(y_i+h+\Psi_N))
    \cap(g_{j(i)}+2\Psi_N)\big)
    = \bigsqcup_{i=1}^{\ell} \big(g_{j(i)} + (2(y_i+h+\Psi_N)
    \cap 2\Psi_N)).
\end{equation*}
We thus have that
\begin{equation}\label{folner eq2}
    |(g+F_N)\cap F_N|
    = \sum_{i=1}^{\ell} |2(y_i+h+\Psi_N)\cap 2\Psi_N)|
    \geq \sum_{i=1}^{\ell} |2((y_i+h+\Psi_N )\cap \Psi_N)|.
\end{equation}
Now since $\Psi$ is a \Folner{} sequence in $G$, for $N$ sufficiently large,
we have that for every $0\leq i\leq \ell $,
$$|(y_i+h+\Psi_N)\triangle\Psi_N|\leq\frac{\varepsilon}{r}|\Psi_N|,$$
and then we have that
\begin{equation}\label{folner eq3}
\frac{|2((y_i+h+\Psi_N)\cap \Psi_N)|}{|2\Psi_N|}
\geq 
1 - \frac{|2(\Psi_N \triangle (y_i+h+\Psi_N))|}{|2\Psi_N|}
\geq 
1 - \frac{r|\Psi_N \triangle (y_i+h+\Psi_N)|}{|\Psi_N|}
\geq 1 - \varepsilon,
\end{equation}
where for the second inequality above we used that
$|2(\Psi_N \triangle (y_j+h+\Psi_N))| \leq 
|(\Psi_N \triangle (y_j+h+\Psi_N))|$
and that $|2\Psi_N| \geq \frac{|\Psi_N|}{r}$.
Then, combining \eqref{folner eq2} and \eqref{folner eq3} we get 
that for $N$ sufficiently large,
\begin{equation*}
    \frac{|(g+F_N)\cap F_N|}{|F_N|}
    \geq \frac{\ell(1-\varepsilon)|2\Psi_N|}{|F_N|}
    = 1-\varepsilon.
\end{equation*}
Since $\varepsilon>0$ was arbitrary, it follows that 
$\lim_{N\to\infty}\frac{|(g+F_N)\cap F_N|}{|F_N|}=1$.
Thus, $(F_N)_{N\in \N}$ is a \Folner{} sequence in $G$.
\end{proof}

\begin{proof}[Proof of \ref{item bound for} in \cref{theorem bound for quasi-invariant}]
Fix a countable abelian group $G$ with 
$\ell=[G:2G]<\infty$ and $r=|\ker(D)|<\infty$. We need to prove that 
$\alpha_{G}= \min\{1, r/\ell\}$.

Let $\Phi=(\Phi_N)_{N\in \N}$ be a \Folner{} sequence in $G$. By 
    definition, $\alpha_{\Phi} \leq 1$. On the other hand, from 
    \cref{lemma aux folner 2} we know that 
    $\frac{|\Phi_N/2|}{|\Phi_N|}\to \frac{r}{\ell}$ as $N\to \infty$.
    For each $N$, $\frac{|\Phi_N/2\cap \Phi_N|}{|\Phi_N|}
    \leq \frac{|\Phi_N/2|}{|\Phi_N|}$, so taking $\liminf_{N \to \infty}$ we 
    obtain that $\alpha_{\Phi}\leq \frac{r}{\ell}$. Therefore, for each 
    $\Phi \in \mathcal{F}_G$, $\alpha_{\Phi} \leq \min \big\{1,\frac{r}{\ell} \big\}$, which implies that $\alpha_{G}\leq \min\{1, r/\ell\}$.

To conclude the proof, it 
suffices to prove that $\alpha_{G}\geq \min\{1, r/\ell\}$. We split the proof
into three cases, according to whether $\ell > r$, $\ell < r$ or 
$\ell=r$. 

\underline{The case $\ell > r$:}
In this case $\min\{1, r/\ell\}=r/\ell$. 
Let ${\Psi}=({\Psi}_N)_{N\in \N}$ be any \Folner{} sequence in $G$.
Let also $g_1, \ldots, g_{\ell}\in G$ so that $g_1 =e_G$ and 
$G=\bigsqcup_{i=1}^{\ell} g_i + 2G$. For each $N\in \N$, let  
\begin{equation*}
{H}_N ^{(1)} = \bigsqcup_{i=1}^{\ell} 
(g_i+2\Psi_N) \text{ and }  
{H}_N ^{(j)} = \bigsqcup_{i=1}^{\ell} \big(g_i+2H_N ^{(j-1)}\big),
\text{ for } j >1.
\end{equation*} 
From \cref{final_useful_lemma}, we inductively get that for all 
$j\in \N$, $H^{(j)}= \big(H^{(j)}_N\big)_{N\in \N}$
is a \Folner{} sequence in $G$. 
For each $k,N\in \N$, let $$F^{(k)}_N= \bigcup_{j=1} ^{k} H_{N} ^{(j)}.$$
From \cref{lemma aux folner 3} \eqref{item_u_f} we have that for all 
$k\in \N$, $F^{(k)}=\big(F_{N} ^{(k)}\big)_{N \in \N}$ is also a \Folner{}
sequence in $G$. We have $H^{(1)}_N/2=(2\Psi_N) /2 =\Psi_N + \ker(D)$
and for each $j>1$, $H^{(j)}_N/2 = \big(2H^{(j-1)}_N\big) /2
=H_N^{(j-1)} + \ker(D)$.
In addition, using \cref{lemma aux folner} we get that for each 
$j\in \N$,
$|H_N ^{(j)}|=\ell |2H_N ^{(j-1)}|=
\frac{\ell}{r} |H_N ^{(j-1)}|+ \oh_{N\to \infty} (|H_N ^{(j-1)}|)$, so
inductively we get that for each $j$, 
\begin{equation}\label{eq_w_i_o}
    \big|H_N ^{(j)}\big|= 
\Big(\frac{\ell}{r}\Big)^j |\Psi_N|+ \oh_{N\to \infty} (|\Psi_N|).
\end{equation}
For each $k,N\in \N$ we have 
$F^{(k)}_N/2 =  (\Psi_N + \ker(D))\cup
\bigcup_{j=1}^{k-1} \big(H_N ^{(j)} + \ker(D)\big)$ and
$(F^{(k)}_N/2) \setminus  F^{(k)}_N \subset (\Psi_N + \ker(D))\cup
\bigcup_{j=1}^{k-1} \big(H_N ^{(j)} + \ker(D) \big)\setminus H_N^{(j)} $. Also, from the proof of \cref{lemma aux folner} we have that 
$\frac{|\Psi_N + \ker(D)|}{|\Psi_N|}\to 1$ and 
$\frac{|H_N^{(j)} + \ker(D)|}{|H_N^{(j)}|}\to 1$ as $N\to \infty$. 
Combining those with \eqref{eq_w_i_o} we infer that 
\begin{align*}
    \frac{\big|(F^{(k)}_N/2) \setminus  F^{(k)}_N\big|}{\big|F^{(k)}_N/2\big|} &\leq 
    \frac{|\Psi_N + \ker(D)|}{\big|F^{(k)}_N/2\big|}
    + \sum_{j=1}^{k-1} \frac{\big|(H^{(j)}_N +\ker(D)) \setminus  H^{(j)}_N\big|}{\big| F_N^{(j)}\big|}\\
    &\leq 
    \frac{|\Psi_N + \ker(D)|}{|\Psi_N|} \frac{|\Psi_N|}{\big|H^{(k-1)}_N\big|}
    + \sum_{j=1}^{k-1} \Bigg(1- \frac{\big|H^{(j)}_N\big|}{\big| H^{(j)}_N+\ker(D) \big|}\Bigg)\xrightarrow{N\to\infty}
    \Big(\frac{r}{\ell}\Big)^{k-1},
\end{align*}
which implies that
$$\limsup_{N\to \infty} \frac{\big|(F^{(k)}_N/2) \setminus  F^{(k)}_N\big|}{\big|F^{(k)}_N/2\big|}
\leq \Big(\frac{r}{\ell}\Big)^{k-1}.$$

From \cref{lemma aux folner 2} we know that $\frac{|F_N^{(k)} /2|}{|F^{(k)}_N|}
\to \frac{r}{\ell}$ as $N\to \infty$ and therefore
$$\alpha_k := \alpha_{F^{(k)}}=
\liminf_{N \to \infty}\frac{|F_N^{(k)} \cap F_N^{(k)}/2|}{|F_N^{(k)}|} = \liminf_{N \to \infty} \frac{|F_N^{(k)}/2|}{|F_N^{(k)}|}
\left(1-\frac{|(F_N^{(k)}/2) \setminus F_N^{(k)}|}{|F_N^{(k)}/2|}  \right)
\geq \frac{r}{\ell} \left(1- \Big(\frac{r}{\ell}\Big)^{k-1}  \right).$$
Using that $(\frac{r}{\ell})^{k-1} \searrow 0$ as $k\to \infty$ we get 
$\alpha_G \geq \sup \{\alpha_k : k\in \N\}=\frac{r}{\ell}$, which 
concludes the proof in that case.

\underline{The case $\ell < r$:}
In this case we have $\min\{1, r/\ell\}=1$. 
Let $\Psi=(\Psi_N)_{N\in \N}$ be any \Folner{} sequence in $G$. 
By an application of \cref{lemma aux folner 3} \eqref{useful_item_2}, we 
have that for each $j\in \N$, 
$(\Psi_N/2^j)_{N\in \N}$ is also a \Folner{} sequence in $G$.
By the same lemma, if for each $k,N \in \N$ we define  
\begin{equation*}
F^{(k)}_N=\bigcup_{j=0}^k \Psi_N/2^j,
\end{equation*}
then we have that 
$F^{(k)}=(F^{(k)}_N)_{N\in \N}$ is also \Folner{} sequence. 
In addition, for each $j\in \N$ we have that 
$$|\Psi_N/2^j|=\Big(\frac{r}{\ell}\Big)^{j}|\Psi_N|+o_{N\to \infty}(|\Psi_N|),$$ 
by induction and \cref{lemma aux folner 2}. Therefore, we see that
$$\frac{\big|F^{(k)}_N \setminus (F^{(k)}_N/2)\big|}{\big|F^{(k)}_N\big|} 
\leq \frac{|\Psi_{N}|}{|\Psi_{N}/2^{k}|}\xrightarrow{N \to \infty} 
\Big(\frac{\ell}{r}\Big)^{k},$$
which implies that
$$\alpha_k := \alpha_{F^{(k)}}=\liminf_{N\to \infty} \frac{\big|F^{(k)}_N \cap (F^{(k)}_N/2)\big|}{|F^{(k)}_N|} = \liminf_{N \to \infty} \left(1-\frac{\big|F^{(k)}_N \setminus (F^{(k)}_N/2)\big|}{|F^{(k)}_N|}  \right) \geq 1 -\Big(\frac{\ell}{r}\Big)^{k}.$$ 
Using that $(\frac{\ell}{r})^{k} \searrow 0$ as $k\to \infty$ we get 
$\alpha_G \geq \sup \{\alpha_k : k\in \N\}=1$, which 
concludes the proof in that case.

\underline{The case $\ell=r$:}
In case $\ell=r$, we have $\min\{1, r/\ell\}=1$. 
Let $\Psi=(\Psi_N)_{N\in \N}$ be any \Folner{} in $G$. As before, for each 
$k\in \N_0$ consider the \Folner{} sequence 
$F^{(k)}=\big(F^{(k)}_N\big)_{N\in \N}$ defined by 
\begin{equation*}
F^{(k)}_N=\bigcup_{j=0}^k \Psi_N/2^j.
\end{equation*}
Also, for each $k\in \N_0$, let $\alpha_k = \alpha_{F^{(k)}} = 
\liminf_{N\to \infty} \frac{|F^{(k)}_N \cap (F^{(k)}_N/2)|}{| F^{(k)}_N|}$. By employing a diagonal argument and passing to a subsequence,
we may assume that the limits exist, so 
$\alpha_k = \lim_{N\to \infty} \frac{| F^{(k)}_N \cap (F^{(k)}_N/2)|}{| F^{(k)}_N|}$.

Assume that $\sup\{ \alpha_k : k\in \N_0\}=\alpha<1$. Then for each 
$j\in \N_0$ we have 
\begin{equation}\label{in the jungle}
\big(F_N ^{(j)} /2 \big) \setminus F_N ^{(j)}=
\big(\Psi_N / 2^{j+1}\big) \setminus \big( \bigcup_{m=0} ^j \Psi_N / 2^m \big).
\end{equation}
Also, since $r=\ell$ and $F^{(j)}$ is a \Folner{}, from \cref{lemma aux folner 2}
we have that $\lim_{N\to \infty} \frac{| F^{(j)}_N /2 |}{| F^{(j)}_N |}=1$. Using the previous we get that 
\begin{equation*}
\alpha_j = \lim_{N\to \infty} \frac{\big| (F^{(j)}_N /2)\cap F^{(j)}_N  \big|}{\big| F^{(j)}_N \big|}= 
\lim_{N\to \infty} \frac{\big| (F^{(j)}_N /2)\cap F^{(j)}_N  \big|}{\big| F^{(j)}_N/2 \big|}= 
\lim_{N\to \infty}\Big[1- \frac{\big| (F^{(j)}_N /2)\setminus F^{(j)}_N  \big|}{\big| F^{(j)}_N/2 \big|}\Big],
\end{equation*}
which implies that $
\lim_{N\to \infty} \frac{| (F^{(j)}_N /2)\setminus F^{(j)}_N  |}{| F^{(j)}_N/2|}
=1 - \alpha_{j},$
and therefore 
\begin{equation}\label{the mighty jungle}
\lim_{N\to \infty} \frac{\big| (F^{(j)}_N /2)\setminus F^{(j)}_N  \big|}{\big| F^{(j)}_N \big|} =1 - \alpha_{j}.
\end{equation}
Observe that $\Psi_N \subset F^{(j)}_N$, and combining this with
\eqref{in the jungle} and \eqref{the mighty jungle} we get that 
\begin{equation}\label{the lion sleeps tonight}
\liminf_{N\to \infty} \frac{\big| \big(\Psi_N / 2^{j+1}\big) \setminus \big( \bigcup_{m=0} ^j \Psi_N / 2^m \big)  \big|}{\big| \Psi_N \big|} \geq 1 - \alpha_{j}\geq 1-\alpha  >0.
\end{equation}
For each $k\in \N$, utilizing
\eqref{the lion sleeps tonight} for $j\in\{0, 1, \ldots, k-1\}$ we can find an 
$N_k\in \N$ such that for all $N\geq N_k$,  
\begin{equation}\label{the peaceful village}
\big| \big(\Psi_N / 2^{j+1}\big) \setminus \big( \bigcup_{m=0} ^j \Psi_N / 2^m \big)  \big|>(1-\alpha)(1-2^{-k}){\big| \Psi_N \big|},\ \text{for all}\ j\in \{0,1,\ldots,k-1\}.
\end{equation}
Observe that $F_N ^{(k)}= \Psi_N \sqcup \bigsqcup_{j=1} ^k
\big(\Psi_N / 2^{j}\big) \setminus \big( \bigcup_{m=0} ^{j-1} \Psi_N / 2^m \big)
=
\Psi_N \sqcup \bigsqcup_{j=0} ^{k-1}
\big(\Psi_N / 2^{j+1}\big) \setminus \big( \bigcup_{m=0} ^{j} \Psi_N / 2^m \big),
$ so for $N\geq N_k$ we have 
\begin{equation}\label{hakuna matata}
|F_N ^{(k)}| = |\Psi_N| + \sum_{j=0} ^{k-1} \big| \big(\Psi_N / 2^{j+1}\big) \setminus \big( \bigcup_{m=0} ^j \Psi_N / 2^m \big)  \big|
> k(1-\alpha)(1-2^{-k})|\Psi_N|.
\end{equation}
Since $F_N ^{(k)} \setminus (F_N ^{(k)}/2) \subset \Psi_N$, combining with  
\eqref{hakuna matata} we get that for $N\geq N_k$, 
\begin{equation}\label{hakuna matata 2}
\frac{\big| F_N ^{(k)} \setminus (F_N ^{(k)}/2) \big|}{\big| F_N ^{(k)}\big|}
\leq \frac{1}{k(1-\alpha)(1-2^{-k})},
\end{equation}
which implies that
$$\alpha_k := \alpha_{F^{(k)}}=\liminf_{N\to \infty} \frac{\big|F^{(k)}_N \cap (F^{(k)}_N/2)\big|}{|F^{(k)}_N|} = \liminf_{N \to \infty} \left(1-\frac{\big|F^{(k)}_N \setminus (F^{(k)}_N/2)\big|}{|F^{(k)}_N|}  \right) \geq 1 - \frac{1}{k(1-\alpha)(1-2^{-k})}.$$ 
Since $\frac{1}{k(1-\alpha)(1-2^{-k})}\to 0$ as $k\to \infty$ we get 
that $\alpha_k \to 1$ as $k\to \infty$, which contradicts our 
original assumption that $\sup\{ \alpha_k : k\in \N_0\}<1$. 
Therefore, $\sup\{ \alpha_k : k\in \N_0\}=1$, which implies that 
$\alpha_G \geq 1$ and concludes the proof in the case $\ell=r$, and with it 
the proof of \ref{item bound for} of \cref{theorem bound for quasi-invariant}.
\end{proof}

\appendix

\section{Properties of abelian groups and their \Folner{} sequences} 
In this appendix, we collect results regarding \Folner{} sequences in 
abelian groups that are used throughout the paper.  
Let $G$ denote a countable abelian group with 
$\ell=[G:2G]< \infty$ and $r=|\ker(D)|<\infty$.

\begin{lemma}\label{lemma aux folner 3}
    Let $\Phi$, $\Psi$ be \Folner{} sequences in $G$. Then the following hold:
    \begin{enumerate}[(i)]
            \item \label{item_u_f} $\Phi\cup \Psi=(\Phi_N\cup \Psi_N)_{N\in \N}$ is a \Folner{} sequence in $G$. 
    \item \label{intersection_is_Folner_2} \vspace{2mm} If $ \displaystyle 0< \liminf_{N\to \infty} \frac{|\Psi_N|}{|\Phi_N|} \leq \limsup_{N\to \infty} \frac{|\Psi_N|}{|\Phi_N|} < +\infty $ and, $ \displaystyle \liminf_{N\to \infty} \frac{|\Phi_N \cap \Psi_N|}{|\Psi_N|}>0 $ or $\displaystyle \liminf_{N\to \infty} \frac{|\Phi_N \cap \Psi_N|}{|\Phi_N|}>0$, 
    \vspace{2mm} then $\Phi \cap \Psi = (\Phi_N \cap \Psi_N)_{N\in \N}$ is a 
    \Folner{} sequence in $G$.
    \item \label{shifts of Folner} If $(g_N)_{N\in \N}$ is a sequence 
    of elements of $G$, then the sequence of shifts 
    $(g_N+\Phi_N)_{N\in \N}$ is a \Folner{} sequence 
    in $G$. 
    \item  \label{useful_item_2}  \vspace{2mm} $\Phi/2=(\Phi_N/2)_{N\in \N}$ is a \Folner{} sequence in $G$. 
    \end{enumerate}
\end{lemma}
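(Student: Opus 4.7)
The plan is to verify the asymptotic invariance property directly for each of the four sequences, using elementary set-theoretic inclusions together with, for the final item, the earlier asymptotic $|\Phi_N \cap 2G|/|\Phi_N| \to 1/\ell$. I expect item \eqref{useful_item_2} to be the main obstacle; the other three are essentially direct.

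For \eqref{item_u_f}, I would use the standard inclusion $(A \cup B) \triangle (A' \cup B') \subset (A \triangle A') \cup (B \triangle B')$ with $A=\Phi_N$, $B=\Psi_N$, $A'=g+\Phi_N$, $B'=g+\Psi_N$, together with the lower bound $|\Phi_N \cup \Psi_N| \geq \max\{|\Phi_N|,|\Psi_N|\}$. This yields
\[
\frac{|(\Phi_N \cup \Psi_N) \triangle (g+(\Phi_N \cup \Psi_N))|}{|\Phi_N \cup \Psi_N|} \leq \frac{|\Phi_N \triangle (g+\Phi_N)|}{|\Phi_N|} + \frac{|\Psi_N \triangle (g+\Psi_N)|}{|\Psi_N|} \xrightarrow{N\to\infty} 0.
\]
For \eqref{shifts of Folner}, translation in $G$ is a bijection on subsets, so $|(g_N+\Phi_N) \triangle (g+g_N+\Phi_N)| = |\Phi_N \triangle (g+\Phi_N)|$ and $|g_N+\Phi_N| = |\Phi_N|$; the asymptotic invariance ratios at any fixed $g \in G$ for $(g_N+\Phi_N)_N$ coincide with those of $\Phi$, and the conclusion is immediate.

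For \eqref{intersection_is_Folner_2}, the analogous inclusion $(A \cap B) \triangle (A' \cap B') \subset (A \triangle A') \cup (B \triangle B')$ reduces the problem to bounding the quotients $|\Phi_N|/|\Phi_N \cap \Psi_N|$ and $|\Psi_N|/|\Phi_N \cap \Psi_N|$ uniformly in $N$. If $\liminf |\Phi_N \cap \Psi_N|/|\Phi_N| > 0$, the first quotient is bounded, and the second follows by writing $|\Psi_N|/|\Phi_N \cap \Psi_N| = (|\Psi_N|/|\Phi_N|)\cdot(|\Phi_N|/|\Phi_N \cap \Psi_N|)$ and invoking the finite $\limsup$ of $|\Psi_N|/|\Phi_N|$; the case of the other lim inf hypothesis is symmetric, using the positive $\liminf$ of $|\Psi_N|/|\Phi_N|$ to pass between the two normalizations.

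For \eqref{useful_item_2}, the key identity is
\[
(\Phi_N/2) \triangle (g+\Phi_N/2) = D^{-1}\bigl(\Phi_N \triangle (2g+\Phi_N)\bigr),
\]
obtained from the equivalences $x \in \Phi_N/2 \Leftrightarrow 2x \in \Phi_N$ and $x \in g+\Phi_N/2 \Leftrightarrow 2x \in 2g+\Phi_N$. Since each fiber of $D$ has cardinality $r$, this gives $|(\Phi_N/2) \triangle (g+\Phi_N/2)| \leq r\,|\Phi_N \triangle (2g+\Phi_N)|$. For the denominator, the identity $\Phi_N/2 = D^{-1}(\Phi_N \cap 2G)$ yields $|\Phi_N/2| = r\,|\Phi_N \cap 2G|$, and \cref{lemma aux folner 2} supplies $|\Phi_N \cap 2G|/|\Phi_N| \to 1/\ell$. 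Combining,
\[
\frac{|(\Phi_N/2) \triangle (g+\Phi_N/2)|}{|\Phi_N/2|} \leq \frac{|\Phi_N \triangle (2g+\Phi_N)|}{|\Phi_N \cap 2G|} = (\ell + o(1)) \cdot \frac{|\Phi_N \triangle (2g+\Phi_N)|}{|\Phi_N|} \xrightarrow{N\to\infty} 0,
\]
as desired. The delicate point, compared with the other items, is that the normalization by $|\Phi_N/2|$ is not controlled by $|\Phi_N|$ in an absolute sense but only asymptotically through the density of $2G$ in $\Phi_N$; this is precisely where the finiteness of both $\ell$ and $r$ enters the argument.
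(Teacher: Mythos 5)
Your treatments of \eqref{item_u_f}, \eqref{intersection_is_Folner_2} and \eqref{shifts of Folner} are correct; the paper dispenses with them as immediate from the definitions, so there is nothing to compare beyond noting your arguments are standard and fine. For \eqref{useful_item_2}, your structure is close to the paper's but you work entirely inside $G$: you use the exact identity $(\Phi_N/2) \triangle (g+\Phi_N/2) = D^{-1}\bigl(\Phi_N \triangle (2g+\Phi_N)\bigr)$ and the count $|\Phi_N/2| = r|\Phi_N \cap 2G|$, then normalize by $|\Phi_N \cap 2G|$. The paper instead first passes to $\wt{\Phi}_N = \Phi_N \cap 2G$, cites that this is a F\o lner sequence in the subgroup $2G$ (via \cite[Lemma 5.4]{charamaras_mountakis2024}), notes $\Phi_N/2 = \wt{\Phi}_N/2$, and finishes with the exact equalities for preimages of subsets of $2G$. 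Both routes are valid.

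There is, however, a genuine circularity in your last step: you invoke \cref{lemma aux folner 2} to get $|\Phi_N \cap 2G|/|\Phi_N| \to 1/\ell$. But \cref{lemma aux folner 2} is proved by applying \cref{lemma aux folner} to $\Psi_N = \Phi_N/2$, which requires knowing that $\Phi/2$ is a F\o lner sequence --- precisely what you are proving in \eqref{useful_item_2}. So as written, the citation chain loops back on itself. The fix is easy and does not change the substance of your argument: the asymptotic $|\Phi_N \cap 2G|/|\Phi_N| \to 1/\ell$ is an independent fact about finite-index subgroups and F\o lner sequences (this is exactly \cite[Lemma 5.4]{charamaras_mountakis2024}, which the paper cites; alternatively one can argue directly that the $\ell$ cosets of $2G$ have asymptotically equal intersections with $\Phi_N$ by translation-invariance, hence each has density $1/\ell$). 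Replace the reference to \cref{lemma aux folner 2} with one of these and the proof is sound.
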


\begin{proof} 
Statements \eqref{item_u_f} and \eqref{intersection_is_Folner_2}
follow immediately from the definitions, and while 
\eqref{shifts of Folner} should be known to aficionados, the proof is also a simple consequence of the definitions.
Let us now prove \eqref{useful_item_2}. From 
{\cite[Lemma 5.4]{charamaras_mountakis2024}} we know that $N\mapsto \wt{\Phi}_N=
\Phi_N \cap 2G$, $N\in \N$ is a \Folner{} sequence in $2G$. Observe that 
for each $N \in \N$, 
$\Phi_N/2=\wt{\Phi}_N /2$. Let $g \in G$. Then for each $N$ we have that 
$\big(g+\wt{\Phi}_N/2 \big) \triangle \big(\wt{\Phi}_N/2\big)\subset 
\big((2g+\wt{\Phi}_N) \triangle \wt{\Phi}_N\big)/2$.

Observing that $(2g+ \wt{\Phi}_N )\triangle \wt{\Phi}_N\subset 2G$ and 
$\wt{\Phi}_N \subset 2G$, it follows from the definition of the kernel of the doubling map $D$ that 
$| \big((2g+ \wt{\Phi}_N )\triangle \wt{\Phi}_N\big)/2|=
r |(2g+ \wt{\Phi}_N )\triangle \wt{\Phi}_N|$
and $ |\wt{\Phi}_N/2| = r |\wt{\Phi}_N| $. Thus, we get that
\begin{equation*}
\frac{|\big(g+{\Phi}_N/2 \big) \triangle \big({\Phi}_N/2\big)|}{|{\Phi}_N/2|}
=
\frac{|\big(g+\wt{\Phi}_N/2 \big) \triangle \big(\wt{\Phi}_N/2\big)|}{|\wt{\Phi}_N/2|}
\leq 
\frac{ |(2g+ \wt{\Phi}_N )\triangle \wt{\Phi}_N|}{ |\wt{\Phi}_N|},
\end{equation*}
which goes to $0$ as $N \to \infty$, because $(\wt{\Phi}_N)_{N\in \N}$ is a 
\Folner{} in $2G$. This concludes the proof of \eqref{useful_item_2}.
\end{proof}

\begin{remark} \label{remark about shifts of Folner}
We remark here that statements \eqref{item_u_f}, 
\eqref{intersection_is_Folner_2} and \eqref{shifts of Folner} hold in 
any countable abelian group, irrespectively of whether the values 
$r, \ell$ are finite or infinite.
\end{remark}

\begin{lemma} \label{lemma aux folner}
    Let $\Psi=(\Psi_N)_{N\in \N}$ be a \Folner{} sequence in $G$. Then 
    \begin{equation} \label{eq Folner and kernels}
        \lim_{N\to \infty} \frac{|\{g\in \Psi_N : g+ \ker(D) \subset \Psi_N\} |}
    {|\Psi_N|}=1.
    \end{equation}
    In particular, this implies that 
    $\frac{|2\Psi_N|}{|\Psi_N|}=\frac{1}{r} + \oh_{N\to \infty} (1)$.
\end{lemma}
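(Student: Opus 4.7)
The plan is to exploit the Følner property applied to each element of the finite group $\ker(D)$, combined with a union bound, and then leverage the fact that the set of ``good'' elements is a union of $\ker(D)$-cosets to evaluate $|2\Psi_N|$.

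First, I would observe that for each fixed $h \in \ker(D)$, the Følner property yields
$$\lim_{N\to\infty}\frac{|\Psi_N \triangle (h+\Psi_N)|}{|\Psi_N|}=0.$$
Since $|\Psi_N|=|h+\Psi_N|$, we have $|\Psi_N \setminus (\Psi_N - h)| = |\Psi_N| - |\Psi_N \cap (h+\Psi_N)| = \tfrac{1}{2}|\Psi_N \triangle (h+\Psi_N)|$, which is $\oh_{N\to\infty}(|\Psi_N|)$.

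Next, I would note that the complement of the set in \eqref{eq Folner and kernels} inside $\Psi_N$ is exactly
$$\{g\in\Psi_N : \exists\, h\in\ker(D),\ g+h\notin \Psi_N\} \;=\; \bigcup_{h\in\ker(D)} \bigl(\Psi_N \setminus (\Psi_N - h)\bigr).$$
Since $|\ker(D)|=r<\infty$, a finite union bound together with the previous step gives
$$\frac{|\{g\in\Psi_N : g+\ker(D)\not\subset\Psi_N\}|}{|\Psi_N|} \;\leq\; \sum_{h\in\ker(D)} \frac{|\Psi_N \setminus (\Psi_N-h)|}{|\Psi_N|} \;\xrightarrow[N\to\infty]{}\; 0,$$
which establishes \eqref{eq Folner and kernels}.

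For the ``in particular'' claim, let $A_N = \{g\in\Psi_N : g+\ker(D)\subset\Psi_N\}$, so $|A_N|/|\Psi_N|\to 1$ by what we just proved. The crucial observation is that $A_N$ is by construction a union of cosets of $\ker(D)$; hence the restriction of $D$ to $A_N$ is an exactly $r$-to-$1$ map, giving $|2A_N|=|A_N|/r$. Since $A_N\subset \Psi_N$, we get the lower bound $|2\Psi_N|\geq |2A_N|=|A_N|/r=(1-\oh(1))|\Psi_N|/r$. For the upper bound, write $2\Psi_N = 2A_N \cup 2(\Psi_N\setminus A_N)$; then
$$|2\Psi_N| \;\leq\; |2A_N| + |\Psi_N\setminus A_N| \;=\; \frac{|A_N|}{r} + \oh_{N\to\infty}(|\Psi_N|) \;\leq\; \frac{|\Psi_N|}{r} + \oh_{N\to\infty}(|\Psi_N|),$$
using that $D$ is at most $r$-to-$1$ but here we just use $|2S|\leq |S|$. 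Combining the two bounds yields $|2\Psi_N|/|\Psi_N|=1/r+\oh_{N\to\infty}(1)$, as desired. The only conceptual point is recognizing that $A_N$ is automatically $\ker(D)$-invariant, which makes $D|_{A_N}$ exactly $r$-to-$1$; all remaining steps are straightforward Følner estimates.
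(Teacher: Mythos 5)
Your proof is correct, and the first half (the union bound over $h\in\ker(D)$ using the F\o lner property) is identical to the paper's argument. For the ``in particular'' claim, however, you take a cleaner and slightly different route. The paper sandwiches $\Psi_N$ between an inner approximation $\Phi_N = A_N$ and an outer approximation $F_N=\bigcup_{g\in\Psi_N}(g+\ker(D))$, showing both are $\ker(D)$-invariant (hence $D$ is exactly $r$-to-$1$ on them) and that both have cardinality $(1+\oh(1))|\Psi_N|$; the conclusion then follows from $|2\Phi_N|\le|2\Psi_N|\le|2F_N|$. You avoid the outer set entirely: the lower bound $|2\Psi_N|\ge|2A_N|=|A_N|/r$ is the same, but for the upper bound you split $2\Psi_N = 2A_N \cup 2(\Psi_N\setminus A_N)$ and bound the error trivially by $|2(\Psi_N\setminus A_N)|\le|\Psi_N\setminus A_N|=\oh(|\Psi_N|)$, which is enough. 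This is a genuine (if minor) simplification: it saves the construction and analysis of $F_N$ and makes the upper bound a one-line union estimate rather than a separate $r$-to-$1$ count on a larger set. The key observation you flag -- that $A_N$ is automatically a union of $\ker(D)$-cosets because $(g+h)+\ker(D)=g+\ker(D)$ -- is indeed the crux, and it is the same structural fact the paper uses for $\Phi_N$.
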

\begin{proof}  
Let $\ker(D)=\{h_1, \dots, h_r\}$. For each $N$ we have that 
$$\{g\in \Psi_N : g+ \ker(D) \not \subset \Psi_N\}
=
\bigcup_{i=1}^r
\{g\in \Psi_N : g+ h_i \notin \Psi_N\} =
\bigcup_{i=1}^r \Psi_N \setminus (\Psi_N - h_i)
\subset \bigcup_{i=1}^r \Psi_N \triangle (\Psi_N - h_i)$$
and therefore
\begin{equation}\label{eq_ker_foln}
\frac{|\{g\in \Psi_N : g+ \ker(D) \not \subset \Psi_N\} |}    {|\Psi_N|}
\leq 
\sum_{i=1}^r 
\frac{|\Psi_N \triangle (\Psi_N - h_i)|}{|\Psi_N|}.
\end{equation}
Since $\Psi$ is a \Folner{} sequence in $G$, each summand in the 
right hand side of \eqref{eq_ker_foln} goes to $0$ as $N\to \infty$, 
so
$$\lim_{N \to \infty} \frac{|\{g\in \Psi_N : g+ \ker(D) \not \subset \Psi_N\} |}    {|\Psi_N|} =0,$$ 
which implies \eqref{eq Folner and kernels}.
Now, for each $N$, if we let $\Phi_N=\{g\in \Psi_N : g+ \ker(D) \subset \Psi_N \} $, then we have that 
$\Phi_N \subset \Psi_N \subset \bigcup_{g\in \Psi_N }
g+ \ker(D),$
which gives that
\begin{equation}\label{F_times_r}
    | 2 \Phi_N| \leq |2\Psi_N| \leq |2\bigcup_{g\in \Psi_N }
g+ \ker(D)|.
\end{equation}

Consider the equivalence relation in $G$ defined by $a\equiv b \iff a-b\in 
\ker(D)$, and for each $g\in G$ let $[g]$ denote the equivalence class of 
$g$. Note that for $g,g'\in G$, $[g]\cap [g'] \neq \emptyset 
\iff [g]=[g']$. 
Let $R_N$ be the number of distinct equivalence classes $[g]$ contained in
$\Phi_N$. Then $|\Phi_N|  = rR_N$ and $|2\Phi_N| = R_N$, 
because $2(g+h_i)=2g$ for each $g\in G$, $h_i\in \ker(D)$, combined with 
the fact that whenever $[g] \cap [g'] = \emptyset$ we also have that $2g \neq 2g'$. 
Hence, $| 2 \Phi_N|=\frac{1}{r}|\Phi_N|$. 

Let also $F_N= \bigcup_{g\in \Psi_N } g+ \ker(D)$, and let $L_N$ be 
the number of different equivalent classes appearing in $\Psi_N$.
As before we see that $| F_N|= r L_N$ and  $|2F_N|= L_N$, so that
$| 2 F_N|=\frac{1}{r}|F_N|$. Moreover, 
$$F_N \setminus \Psi_N \subset \bigcup_{\substack{g\in \Psi_N\\ g+ \ker(D) 
\not \subset \Psi_N}} g+\ker(D), $$
so $|F_N \setminus \Psi_N| \leq r \left|\{g \in \Psi_N: g+ \ker(D) 
\not \subset \Psi_N\} \right|=r|\Psi _N \setminus \Phi_N|=
r(|\Psi _N| -|\Phi_N|)$. Dividing by $|\Psi_N|$, taking $N \to \infty$ 
and using that $ \frac{|\Phi_N|}{|\Psi_N|}\to 1$, we get that 
$\frac{|F_N \setminus \Psi_N|}{|\Psi_N|}\to 0$, so also 
$\frac{|F_N \setminus \Psi_N|}{|F_N|}\to 0$, and since $\Psi_N \subset 
F_N$, we get that $\frac{|\Psi_N|}{|F_N|} \to 1 $. 
Now, dividing by $|\Psi_N|$ in \eqref{F_times_r}, we get that 
$\frac{| 2 \Phi_N|}{|\Psi_N|} \leq \frac{|2\Psi_N|} {|\Psi_N|} \leq 
\frac{|2F_N|}{|\Psi_N|}$. Then $\frac{| 2 \Phi_N|}{|\Psi_N|}= 
\frac{1}{r} \frac{|\Phi_N|}{|\Psi_N|} \to \frac{1}{r}$ and  
$\frac{| 2 F_N|}{|\Psi_N|}= 
\frac{1}{r} \frac{|F_N|}{|\Psi_N|} \to \frac{1}{r}$, which implies that 
$\frac{|2\Psi_N|}{|\Psi_N|} \to \frac{1}{r} $ as $N \to \infty$ and concludes the 
proof of the lemma. 
\end{proof}

As a special case of the above result we also obtain the following.

\begin{lemma}\label{lemma aux folner 2}
For any \Folner{} sequence $\Phi=(\Phi_N)_{N\in \N}$ in $G$ we have that 
$\frac{|\Phi_N/2|}{|\Phi_N \cap 2G|}= r + \oh_{N\to \infty} (1) $ 
and $\frac{|\Phi_N/2|}{|\Phi_N|}= \frac{r}{\ell} + \oh_{N\to \infty} (1)$.    
\end{lemma}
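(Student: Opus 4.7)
The first identity is actually exact, not just asymptotic, and this is the heart of the matter. My plan is to observe that $\Phi_N/2 = D^{-1}(\Phi_N) = D^{-1}(\Phi_N\cap 2G)$, since the image of the doubling map is exactly $2G$. Because $D\colon G \to 2G$ is a surjective group homomorphism whose kernel has cardinality $r$, each element of $2G$ has exactly $r$ preimages under $D$, so
\begin{equation*}
|\Phi_N/2| \;=\; |D^{-1}(\Phi_N\cap 2G)| \;=\; r\cdot|\Phi_N\cap 2G|.
\end{equation*}
This establishes the first claim (with equality rather than merely $r+\oh_{N\to\infty}(1)$).

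For the second identity, in view of the first it suffices to show that $|\Phi_N\cap 2G|/|\Phi_N| \to 1/\ell$. I would prove this directly from the \Folner{} property: fix coset representatives $g_1,\dots,g_\ell$ with $G=\bigsqcup_{i=1}^\ell(g_i+2G)$, and note that
\begin{equation*}
(-g_i+\Phi_N)\cap 2G \;=\; -g_i + (\Phi_N \cap (g_i+2G)),
\end{equation*}
so $|\Phi_N\cap(g_i+2G)| = |(-g_i+\Phi_N)\cap 2G|$. Since $|(-g_i+\Phi_N)\triangle \Phi_N|=\oh_{N\to\infty}(|\Phi_N|)$ by the \Folner{} property, we obtain $|\Phi_N\cap(g_i+2G)| = |\Phi_N\cap 2G| + \oh_{N\to\infty}(|\Phi_N|)$ for every $i$. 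Summing over $i=1,\dots,\ell$ and using the disjoint union gives
\begin{equation*}
|\Phi_N| \;=\; \sum_{i=1}^\ell |\Phi_N\cap(g_i+2G)| \;=\; \ell\cdot|\Phi_N\cap 2G| + \oh_{N\to\infty}(|\Phi_N|),
\end{equation*}
which rearranges to $|\Phi_N\cap 2G|/|\Phi_N| = 1/\ell + \oh_{N\to\infty}(1)$. Combining with the first identity yields
\begin{equation*}
\frac{|\Phi_N/2|}{|\Phi_N|} \;=\; r\cdot\frac{|\Phi_N\cap 2G|}{|\Phi_N|} \;=\; \frac{r}{\ell} + \oh_{N\to\infty}(1).
\end{equation*}

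There is no serious obstacle here: the first identity is an exact combinatorial statement about preimages under a group homomorphism with finite kernel, and the second reduces to the well-known fact that finite-index cosets have equal \Folner{} density $1/\ell$, which the paper has already invoked elsewhere (via \cite[Lemma 5.4]{charamaras_mountakis2024}). The only point requiring a touch of care is handling the map $D$ as an $r$-to-one map onto $2G$ rather than onto all of $G$, which is why restricting to $\Phi_N\cap 2G$ in the first step is crucial.
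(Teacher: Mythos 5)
Your proof is correct, and it takes a noticeably different and in fact cleaner route than the paper. For the first identity, the paper applies \cref{lemma aux folner} to the F\o lner sequence $\Psi=\Phi/2$ (first checking via \cref{lemma aux folner 3}\eqref{useful_item_2} that $\Phi/2$ is indeed F\o lner), obtaining only the asymptotic $\frac{|\Phi_N/2|}{|\Phi_N\cap 2G|}=r+\oh_{N\to\infty}(1)$; you observe instead that $\Phi_N/2=D^{-1}(\Phi_N\cap 2G)$ and that $D\colon G\to 2G$ is an $r$-to-one surjection, so $|\Phi_N/2|=r\cdot|\Phi_N\cap 2G|$ holds exactly, with no F\o lner input and no asymptotic error term at all. (In fact, tracing through the paper's own argument, one sees that for $\Psi=\Phi/2$ the set $\{g\in\Psi_N\colon g+\ker(D)\subset\Psi_N\}$ equals all of $\Psi_N$, so the asymptotic of \cref{lemma aux folner} is also exact there --- but the paper does not remark on this.) For the second identity, the paper cites \cite[Lemma 5.4]{charamaras_mountakis2024} for the fact that $|\Phi_N\cap 2G|/|\Phi_N|\to 1/\ell$, whereas you re-derive it from scratch using translation by the coset representatives $g_i$ and the F\o lner property. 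Both routes are correct; yours is more elementary and self-contained, while the paper's is shorter on the page because it leans on lemmas it has already set up (and needs elsewhere, e.g.\ in \cref{correspondence}).
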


\begin{proof}
Applying Lemma \ref{lemma aux folner} with $\Psi_N=\Phi_N/2$ and observing that $2(\Phi_N/2)=\Phi_N \cap 2G$, we get that 
$\frac{|\Phi_N/2|}{|\Phi_N \cap 2G|}= r + \oh_{N\to \infty} (1)$.
Combining the previous with {\cite[Lemma 5.4]{charamaras_mountakis2024}} we get
that $\frac{|\Phi_N/2|}{|\Phi_N|}= \frac{r}{\ell} + \oh_{N\to \infty} (1)$.
\end{proof}

\begin{lemma}\label{lemma_phi/2_phi}
If a \Folner{} sequence 
$\Phi=(\Phi_N)_{N\in \N}$ in $G$ is quasi-invariant with respect to doubling, then 
$(\Phi_N\cap \Phi_N /2)_{N\in \N}$ is also a \Folner{} sequence in $G$.
\end{lemma}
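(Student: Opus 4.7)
The plan is to realize $(\Phi_N\cap \Phi_N/2)_{N\in\N}$ as the intersection of two Følner sequences whose ratios are well-controlled, and then appeal directly to the intersection criterion in Lemma \ref{lemma aux folner 3}. Concretely, I would set $\Psi_N=\Phi_N/2$ and verify the hypotheses of statement \eqref{intersection_is_Folner_2} of that lemma.

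First, by Lemma \ref{lemma aux folner 3}\eqref{useful_item_2}, the sequence $\Phi/2=(\Phi_N/2)_{N\in\N}$ is itself a Følner sequence in $G$, so we are genuinely intersecting two Følner sequences. Second, Lemma \ref{lemma aux folner 2} (which uses the standing assumption $\ell,r<\infty$) gives
\[
    \lim_{N\to\infty}\frac{|\Phi_N/2|}{|\Phi_N|}=\frac{r}{\ell}\in(0,\infty),
\]
so the ratio condition $0<\liminf_N\tfrac{|\Psi_N|}{|\Phi_N|}\leq\limsup_N\tfrac{|\Psi_N|}{|\Phi_N|}<\infty$ is met. Finally, the q.i.d. hypothesis is exactly the statement that
\[
    \liminf_{N\to\infty}\frac{|\Phi_N\cap\Phi_N/2|}{|\Phi_N|}=\alpha_\Phi>0,
\]
which verifies the remaining hypothesis of Lemma \ref{lemma aux folner 3}\eqref{intersection_is_Folner_2}. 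Applying that lemma yields that $(\Phi_N\cap\Phi_N/2)_{N\in\N}$ is a Følner sequence in $G$.

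There is really no obstacle here: the whole content is bundled into the preceding lemmas, and the role of the q.i.d. assumption is precisely to supply the positivity of $\liminf_N |\Phi_N\cap\Phi_N/2|/|\Phi_N|$ that the intersection criterion requires. The only thing to double-check is that we are applying Lemma \ref{lemma aux folner 3}\eqref{intersection_is_Folner_2} in the correct direction: the criterion needs $0<\liminf\frac{|\Psi_N|}{|\Phi_N|}$ rather than just $\limsup$, but since the limit $r/\ell$ exists and is strictly positive, both $\liminf$ and $\limsup$ equal $r/\ell$, so this causes no issue.
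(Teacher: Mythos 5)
Your proof is correct and follows exactly the same route as the paper: first establish that $\Phi/2$ is a Følner sequence via Lemma \ref{lemma aux folner 3}\eqref{useful_item_2}, then use Lemma \ref{lemma aux folner 2} for the ratio bound and the q.i.d. hypothesis for the overlap bound, and finally invoke the intersection criterion Lemma \ref{lemma aux folner 3}\eqref{intersection_is_Folner_2}. The only difference is that you spell out the verification of each hypothesis explicitly, whereas the paper states it more compactly.
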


\begin{proof}
From \cref{lemma aux folner 3} \eqref{useful_item_2} we know that $\Phi/2 = (\Phi_N/2)_{N\in \N}$ 
is also a \Folner{} in $G$. Now, from \cref{lemma aux folner 2} and since 
$\Phi$ is q.i.d. we see that  $\Phi, \Phi/2$ satisfy the assumptions of 
\cref{lemma aux folner 3} \eqref{intersection_is_Folner_2}, and the 
lemma follows. 
\end{proof}

\bibliographystyle{abbrv}
\bibliography{Refs}

\vspace{1cm}

\end{document}